\newcommand{\Too}{\longrightarrow}
\newcommand{\onto}{\twoheadrightarrow}
\newcommand{\act}{\curvearrowright}
\newcommand{\C}{\mathbb{C}}
\newcommand{\R}{\mathbb{R}}
\newcommand{\Z}{\mathbb{Z}}
\newcommand{\M}{\mathrm{M}}
\newcommand{\G}{\Gamma}
\newcommand{\Cstar}{\mathrm{C^*}}
\newcommand{\Cred}{\mathrm{C^*_r}}
\newcommand{\B}{\mathcal{B}}
\newcommand{\Comp}{\mathcal{K}}
\newcommand{\LL}{\mathcal{L}}
\newcommand{\hi}{C(\bd \G)\rtimes \G}
\newcommand{\rcross}{\rtimes_\mathrm{r}}
\newcommand{\K}{\mathrm{K}}
\newcommand{\KK}{\mathrm{KK}}
\newcommand{\E}{\mathrm{E}\hspace{.07pc}}
\newcommand{\Ebar}{\overline{\mathrm{E}}\hspace{.07pc}}
\newcommand{\cov}{\mathrm{cov}}
\newcommand{\dev}{\sigma}
\newcommand{\Tr}{\mathrm{Tr}}
\newcommand{\Hyp}{\mathbb{H}} 
\newcommand{\e}{\epsilon}
\newcommand{\la}{\langle}
\newcommand{\ra}{\rangle}
\newcommand{\bd}{\partial}
\newcommand{\Lip}{\mathrm{Lip}}
\newcommand{\D}{\:\mathrm{d}}
\newcommand{\hdim}{\mathrm{hdim}}
\newcommand{\visdim}{\mathrm{visdim}\:}
\newcommand{\topdim}{\mathrm{topdim}\:}
\numberwithin{equation}{section}
\newtheorem{thm}{Theorem}[section]
\newtheorem{lem}[thm]{Lemma}
\newtheorem{cor}[thm]{Corollary}
\newtheorem{prop}[thm]{Proposition}
\theoremstyle{definition}
\newtheorem{defn}[thm]{Definition}
\newtheorem{rem}[thm]{Remark}
\newtheorem{ex}[thm]{Example}
\begin{document}
\title[Fredholm modules for boundary actions of hyperbolic groups]{Finitely summable Fredholm modules for boundary actions of hyperbolic groups}
\author{Heath Emerson}
\address[H.E.]{Department of Mathematics and Statistics, University of Victoria, Victoria (Canada)}
\email{hemerson@uvic.ca}
\author{Bogdan Nica}
\address[B.N.]{Mathematisches Institut, Georg-August Universit\"at G\"ottingen, G\"ottingen (Germany)}
\email{bogdan.nica@gmail.com}
\date{\today}
\keywords{K-homology, $p$-summable Fredholm modules, boundaries of hyperbolic groups}
\subjclass[2010]{58B34, 20F67}

\begin{abstract}
We construct a family of odd, finitely summable Fredholm modules over the crossed product $\Cstar$-algebra $\hi$ associated to the action of a non-elementary hyperbolic group $\G$ on its Gromov boundary $\bd \G$. These Fredholm modules all represent the same, distinguished class in \(\K\)-homology, namely that of the `boundary extension' of \(\hi\) associated to the Gromov compactification of \(\G\), and is typically nonzero. Their summability is closely related to the Hausdorff dimension of the boundary. We use these results to compute the Connes-Chern character of the boundary extension in cyclic cohomology. \end{abstract}

\maketitle

\section{Introduction}
From the perspective of Noncommutative Geometry, the action of a non-elementary hyperbolic group $\G$ on its Gromov boundary $\bd\G$, which is a compact metrizable space without isolated points, is encoded by the crossed product $\Cstar$-algebra $\hi$. The amenability of the action of $\G$ on $\bd\G$ (due to Adams \cite{Ada}, but see also Germain \cite[Appendix]{AR00} and Kaimanovich \cite{Kai}) means that the full and the reduced crossed products coincide, and $\hi$ is  nuclear (Anantharaman-Delaroche \cite{Ana02}). Laca and Spielberg \cite{LS96}, and Anantharaman-Delaroche \cite{Ana97}, have shown that $\hi$ is purely infinite simple. Much is known about the  K-theory and the K-homology of $\hi$, in which some special features appear -- particularly the Poincar\'e duality result of Emerson \cite{Emerson}, and the Gysin-type sequence obtained by Emerson--Meyer \cite{Emerson:Euler}. Poincar\'e duality for $\hi$ asserts that the K-theory and the K-homology of $\hi$ are canonically isomorphic with a parity shift, $\K_*(\hi)\simeq \K^{*+1}(\hi)$, whenever $\G$ is torsion-free. The map is induced by cup-cap-product with a particular `fundamental class' which is closely related to the boundary extension class, and its Fredholm module representatives studied in this article. 
The Gysin sequence for $\hi$ is an  exact sequence describing the map on K-theory induced by the inclusion $\Cred\G\to\hi$, where $\Cred\G$ is the reduced group $\Cstar$-algebra of $\G$. The name comes from the analogy with the classical Gysin sequence for the inclusion $C(M)\to C(SM)$, where $M$ is a compact manifold and $SM$ is the sphere bundle. And indeed, the analogy between the $\Cstar$-algebra \(\hi\), and sphere bundles of compact manifolds, is a rather strong one, with several aspects.

In this paper, we are interested in the noncommutative geometric aspect of $\hi$. By a result of Connes \cite[Thm.8]{Con89}, we know that a purely infinite simple $\Cstar$-algebra has no finitely summable spectral triples. Therefore \(\hi\) cannot be endowed with a noncommutative Riemannian geometry (this amounts to the fact that \(\bd\G\) has no \(\G\)-invariant metric, nor, actually, any \(\G\)-invariant probability measures -- this is the `Type III' situation). 
However, \(\bd \G\) \emph{does} have an invariant Lipschitz geometry. We are going to exploit this fact to show that $\hi$ does admit finitely summable \emph{Fredholm modules} which are natural and interesting. The informal interpretation is that $\hi$ does not have a finite-dimensional metric geometry, yet it has a finite-dimensional conformal geometry. 

Let us describe the finitely summable Fredholm modules we study herein. The construction relies on suitable probability measures on the boundary $\bd \G$. These are obtained by realizing the boundary $\bd \G$ as a limit set, namely, we let $\G$ act geometrically  -- i.e., isometrically, properly, and cocompactly -- on a Gromov hyperbolic space $X$. In many examples of interest there is a natural choice for \(X\), \emph{e.g.}, for $\G$ a cocompact lattice in $SO(n,1)$ we can take $X=\Hyp^n$; otherwise, $X$ can be manufactured as the Cayley graph of $\G$ with respect to a finite generating set. The boundary as a topological space is a quasi-isometry invariant, so $\bd X$ serves as a concrete topological model for $\bd \G$ for any such choice of $X$. The boundary as a metric-measure space, on the other hand, depends sensitively on the `hyperbolic filling' $X$. On $\bd X$, there is a natural collection of visual metrics. Each visual metric assigns a finite Hausdorff dimension to $\bd X$, and we define the \emph{visual dimension} $\visdim \bd X$ to be the infimum over all the visual Hausdorff dimensions of $\bd X$. The visual dimension is at least as large as the topological dimension. By a visual probability measure on $\bd X$ we mean the normalized Hausdorff measure defined by a visual metric. If we equip $\bd X$ with a visual probability measure $\mu$, then we obtain a faithful representation of $C(\bd \G)$ on $L^2(\bd X, \mu)$ by multiplication, which in turn induces a faithful representation of $\hi$ on $\ell^2(\G,L^2(\bd X,\mu))$ -- the `regular representation' of the crossed-product, corresponding to the measure \(\mu\). Our first result is that this representation of $\hi$, denoted $\lambda_\mu$, together with the projection $P_{\ell^2\G}$ onto the subspace $\ell^2\G$, regarded as constant functions on \(\bd X\), defines an odd, finitely summable Fredholm module for $\hi$: 

\begin{thm}\label{general} Let $\G$ be a non-elementary hyperbolic group acting geometrically on a Gromov hyperbolic space $X$. Endow the boundary $\bd X$ with a visual probability measure $\mu$, and let $\lambda_\mu$ be the regular representation of $\hi$ on $\ell^2(\G,L^2(\bd X,\mu))$. Then $(\lambda_\mu, P_{\ell^2\G})$ is a Fredholm module for $\hi$ which is $p$-summable for every $p>\max\{2,\visdim \bd X\}$.
\end{thm}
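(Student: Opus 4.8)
The module is presented in the projection picture, so with $P := P_{\ell^2\G}$ the task is to show that $[P,\lambda_\mu(a)]$ lies in the Schatten ideal $\mathcal{L}^p$ for $a$ in a dense $*$-subalgebra of $\hi$; since $P=P^*=P^2$, setting $F:=2P-1$ automatically gives $F=F^*$ and $F^2=1$. Identifying $\ell^2(\G,L^2(\bd X,\mu))=\ell^2\G\otimes L^2(\bd X,\mu)$, one has $P=1\otimes P_0$, where $P_0$ is the rank-one projection onto the constant functions (a unit vector, as $\mu$ is a probability measure), and in the regular representation $\lambda_\mu(u_h)=\lambda_h\otimes 1$ commutes with $P$. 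Hence for a finite sum $a=\sum_h f_h u_h$ one has $[P,\lambda_\mu(a)]=\sum_h[P,\lambda_\mu(f_h)]\,\lambda_\mu(u_h)$, and since the $\lambda_\mu(u_h)$ are unitary it suffices to control $[P,\lambda_\mu(f)]$ for $f$ in a dense subalgebra of $C(\bd\G)$. Given $p>\max\{2,\visdim\bd X\}$, I fix once and for all a visual parameter $\e$ with $\hdim(\bd X,d_\e)<p$ --- possible since $\visdim\bd X=\inf_\e\hdim(\bd X,d_\e)<p$ --- and work with the dense $*$-subalgebra $\Lip(\bd X,d_\e)\rtimes_{\textup{alg}}\G$ (density by Stone--Weierstrass). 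This freedom in choosing the Lipschitz algebra is precisely what makes the threshold $\visdim\bd X$ accessible, rather than only the (possibly larger) Hausdorff dimension of the visual metric underlying $\mu$.

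For $f\in\Lip(\bd X,d_\e)$ the operator $\lambda_\mu(f)$ is block-diagonal over $\G$, the $g$-th block being multiplication $M_{g\cdot f}$ on $L^2(\bd X,\mu)$ by a translate of $f$, so $[P,\lambda_\mu(f)]=\bigoplus_{g\in\G}[P_0,M_{g\cdot f}]$. A direct computation of the integral kernel gives $\bigl\|[P_0,M_\varphi]\bigr\|_{\mathcal{L}^2}^2=2\Var_\mu(\varphi)$; hence, using $\|\cdot\|_{\mathcal{L}^p}\le\|\cdot\|_{\mathcal{L}^2}$ for $p\ge 2$ on each block and additivity of the $p$-th power of the Schatten norm over the block decomposition,
\[
\bigl\|[P,\lambda_\mu(f)]\bigr\|_{\mathcal{L}^p}^p \;=\;\sum_{g\in\G}\bigl\|[P_0,M_{g\cdot f}]\bigr\|_{\mathcal{L}^p}^p\;\le\;\sum_{g\in\G}\bigl(2\Var_\mu(g\cdot f)\bigr)^{p/2}.
\]
So the theorem reduces to showing $\sum_{g\in\G}\Var_\mu(g\cdot f)^{p/2}<\infty$: the variances of the translates of $f$ must decay in $|g|:=d_X(o,go)$ ($o$ a fixed basepoint) fast enough to beat the growth series $\sum_g x^{|g|}$ of $\G$, which converges precisely for $x<e^{-h}$, where $h$ is the exponential growth rate of $\G$ --- equivalently $h=\e\cdot\hdim(\bd X,d_\e)$ for every visual parameter $\e$, by Coornaert's theorem.

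For the decay, use that $f$ is $L$-Lipschitz and change variables:
\[
\Var_\mu(g\cdot f)\;=\;\tfrac12\iint|f(\xi)-f(\eta)|^2\,\D(g_*\mu)(\xi)\,\D(g_*\mu)(\eta)\;\le\;L^2\int d_\e(\xi,\zeta_g)^2\,\D(g_*\mu)(\xi),
\]
the inequality via the triangle inequality and the fact that $g_*\mu$ is a probability measure, for an arbitrary basepoint $\zeta_g\in\bd X$. Here the hyperbolic geometry enters. By Coornaert's theorem, $\mu$ is, up to comparability, the Patterson--Sullivan measure: it is Ahlfors $\hdim(\bd X,d_\e)$-regular with respect to every visual metric, and $\G$-quasi-invariant with $\tfrac{\D g_*\mu}{\D\mu}(\xi)\asymp e^{-h\,b_\xi(go,o)}$ in terms of the Busemann cocycle $b$. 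It follows (as in Sullivan's shadow lemma) that $g_*\mu$ concentrates on the visual ball $B_{d_\e}(\zeta_g,e^{-\e|g|})$ around the direction $\zeta_g$ of $go$, with polynomial tails $g_*\mu\bigl(B_{d_\e}(\zeta_g,r)^c\bigr)\lesssim\min\{1,\ (e^{-\e|g|}/r)^{\hdim(\bd X,d_\e)}\}$ for $0<r\le\operatorname{diam}\bd X$. A layer-cake integration against this tail bound gives $\int d_\e(\xi,\zeta_g)^2\,\D(g_*\mu)(\xi)\lesssim e^{-\min\{2\e,h\}|g|}$ (at worst an extra factor $|g|$ in the borderline case $2\e=h$, which is harmless), hence $\Var_\mu(g\cdot f)\lesssim L^2 e^{-\min\{2\e,h\}|g|}$. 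Then $\sum_{g\in\G}\Var_\mu(g\cdot f)^{p/2}\lesssim L^p\sum_{g\in\G}e^{-\frac p2\min\{2\e,h\}|g|}$, and this last series is finite iff $\tfrac p2\min\{2\e,h\}>h$, i.e.\ iff $p>\tfrac{2h}{\min\{2\e,h\}}=\max\{2,\ h/\e\}=\max\{2,\ \hdim(\bd X,d_\e)\}$ --- which holds by the choice of $\e$. Therefore $[P,\lambda_\mu(a)]\in\mathcal{L}^p$ for every $a$ in the dense subalgebra, and since $[P,\lambda_\mu(\cdot)]$ is norm-continuous, $[P,\lambda_\mu(a)]$ is compact for all $a\in\hi$; so $(\lambda_\mu,P_{\ell^2\G})$ is an odd, $p$-summable Fredholm module over $\hi$.

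The main obstacle is the geometric estimate of the third paragraph: establishing the concentration of $g_*\mu$ \emph{with the sharp exponents}, from the conformal-density relation $\tfrac{\D g_*\mu}{\D\mu}\asymp e^{-h\,b_\xi(go,o)}$ together with the standard hyperbolic-space estimates relating Gromov products, Busemann functions and visual balls (equivalently, from Coornaert's quantitative shadow lemma). Everything else --- the block decomposition of $[P,\lambda_\mu(f)]$, the Hilbert--Schmidt identity for $[P_0,M_\varphi]$, the passage to a Lipschitz algebra adapted to a near-optimal visual parameter, and summing the resulting geometric series over $\G$ --- is routine once that estimate is in hand.
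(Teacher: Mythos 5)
Your argument is correct, and at its core it is the paper's argument in different packaging. Your reduction of $[P,\lambda_\mu(a)]$ to the blocks $[P_0,M_{g\cdot f}]$ with Hilbert--Schmidt norm $\sqrt{2\Var_\mu(g\cdot f)}$ is the paper's identity $|\Pi(\phi)|=\M(\dev_\G\phi)$ from Proposition~\ref{from deviation to Fredholm}, so both proofs hinge on showing that variances of translates decay like $\exp(-c\,d(o,go))$ with $c$ beating the critical exponent $e_{\G\act X}$. Where the paper fixes the visual metric underlying $\mu$ and lowers the H\"older exponent to $\alpha<\tfrac12\hdim(\bd X,d_\e)$ (Lemma~\ref{double integral}, proved by an annular decomposition around $g^{-1}o$ based on $(g\xi,g\xi')_o+(g^{-1}o,\xi)_o+(g^{-1}o,\xi')_o\ge d(o,go)-4\delta$ together with Coornaert's Ahlfors regularity), and then reaches the $\visdim$ threshold through comparability of visual measures (Corollary~\ref{uniform summability}), you keep $\alpha=1$ but move the visual parameter close to its supremum and prove concentration of $g_*\mu$ by a shadow-type tail bound plus layer-cake integration; by the H\"older equivalence of visual metrics (Lemma~\ref{scaling lemma}) these are the same freedom, and your tail bound encodes the same geometry as the paper's key lemma. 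Two remarks: your saturated rate $\exp(-\min\{2\e,e_{\G\act X}\}\,d(o,go))$ covers the regime $2\e\ge e_{\G\act X}$, which the paper simply avoids by taking $\alpha<\tfrac12\hdim(\bd X,d_\e)$ (you could avoid it as well by choosing $\e$ with $2<e_{\G\act X}/\e<p$, which is possible since every parameter below a visual parameter is visual); and the concentration estimate you defer to ``Coornaert's quantitative shadow lemma'' is exactly the step the paper writes out in detail, so in a self-contained account you would need to supply it -- including the existence of the attracting point $\zeta_g$ with $(go,\zeta_g)_o\ge d(o,go)-C$, or else replace $\zeta_g$ by $go$ itself using the extended visual function on $X\times\bd X$ of Lemma~\ref{small visual range}.
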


In general, there is no intrinsic reason for choosing a specific visual metric on the boundary of a hyperbolic space. Consider, however, the case of $\Hyp^{n}$: its boundary is the sphere $S^{n-1}$, and the preferred visual metric is the spherical metric. The normalized Hausdorff measure corresponding to the spherical metric is the usual spherical measure. Hence Theorem~\ref{general} yields the following:

\begin{cor}\label{classical} Let $\G$ be a cocompact lattice in $SO(n,1)$. Then the regular representation of $C(S^{n-1})\rtimes \G$ on $\ell^2(\G,L^2(S^{n-1}))$, together with the projection on $\ell^2\G$, defines an odd Fredholm module for $C(S^{n-1})\rtimes \G$ which is $p$-summable for every $p>\max\{2, n-1\}$.
\end{cor}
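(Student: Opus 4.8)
The strategy is simply to recognize Corollary~\ref{classical} as a special case of Theorem~\ref{general}, so the task reduces to verifying that the three hypotheses are met and that the stated summability bound specializes correctly. First I would take $\G$ a cocompact lattice in $SO(n,1)$ acting on $X = \Hyp^n$; since the action of $SO(n,1)$ on $\Hyp^n$ is isometric, proper, and transitive (hence cocompact), and $\G$ is a cocompact lattice, the restricted action $\G \act \Hyp^n$ is geometric in the required sense. Also $\G$ is non-elementary hyperbolic, being a uniform lattice in a rank-one semisimple group; in fact $\Hyp^n$ itself is Gromov hyperbolic (it is $\mathrm{CAT}(-1)$), so the hypotheses of Theorem~\ref{general} apply with this $X$.

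Next I would identify the boundary and the relevant visual probability measure. The Gromov boundary $\bd \Hyp^n$ is homeomorphic to $S^{n-1}$, and among the visual metrics there is the round (spherical) metric of diameter comparable to $1$; its normalized Hausdorff measure is the usual rotation-invariant probability measure on $S^{n-1}$. Taking $\mu$ to be this spherical measure, Theorem~\ref{general} produces the odd Fredholm module $(\lambda_\mu, P_{\ell^2 \G})$ for $C(S^{n-1}) \rtimes \G$ on $\ell^2(\G, L^2(S^{n-1}))$, which is exactly the object in the statement.

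Finally I would pin down the summability exponent. With respect to the spherical metric, the Hausdorff dimension of $S^{n-1}$ is $n-1$; hence the visual Hausdorff dimension attained by this particular visual metric is $n-1$, so $\visdim \bd X \le n-1$. Plugging into Theorem~\ref{general}, the module is $p$-summable for every $p > \max\{2, \visdim \bd X\}$, and since $\visdim \bd X \le n-1$ this holds in particular for every $p > \max\{2, n-1\}$, which is the claimed bound. (One could note that the spherical metric is in fact the one minimizing the visual dimension, so $\visdim \bd X = n-1$, but only the inequality is needed here.)

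The only point requiring a little care — and the nearest thing to an obstacle — is the dictionary between the abstract visual metrics on $\bd \Hyp^n$ coming from the Gromov product and the concrete spherical metric on $S^{n-1}$, together with the identification of the associated normalized Hausdorff measure with the familiar rotation-invariant measure; both are classical facts about real hyperbolic space, so I would invoke them rather than reprove them.
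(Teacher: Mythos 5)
Your proposal is correct and takes essentially the same route as the paper: one specializes Theorem~\ref{general} to $X=\Hyp^n$, noting that the spherical metric on $\bd\Hyp^n\simeq S^{n-1}$ is a visual metric (parameter $\e=1$) whose normalized Hausdorff measure is the spherical measure with Hausdorff dimension $n-1$, so that $\visdim\bd X\leq n-1$ gives $p$-summability for all $p>\max\{2,n-1\}$. The paper additionally records that this visual dimension is actually attained, but that is only needed for the sharper $(n-1)^+$-summability remark, not for the corollary itself.
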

Theorem~\ref{general} is proved in Section~\ref{sec: finite summability}. There is a slightly sharper form of Theorem~\ref{general} in the case when the visual dimension of $X$ is attained. This happens, for instance, in the situation of Corollary~\ref{classical}, which leads to the following improvement in summability: if $n\geq 4$, then the indicated Fredholm module for $C(S^{n-1})\rtimes \G$ is $(n-1)^+$-summable.

Our second result is that the Fredholm modules just defined represent the \emph{boundary extension class} of \(\G\). The boundary extension is the $\Cstar$-algebra extension 
\begin{align*}
0 \longrightarrow C_0(\G)\rtimes \G \simeq \mathcal{K}(\ell^2\G) \longrightarrow 
C(\overline{\G})\rtimes \G \longrightarrow C(\bd \G)\rtimes \G\longrightarrow 0
\end{align*}
where $\overline{\G}$ is the \(\G\)-equivariant compactification of $\G$ by the boundary $\bd\G$.  This extension determines a class in the Brown--Douglas--Fillmore group $\mathrm{Ext}(\hi)$, which, by the nuclearity of \(\hi\), is naturally isomorphic to Kasparov's \(\K^1(\hi) := \KK_1(\hi, \C)\). 

\begin{thm}\label{represent}
Let $\G$ be a non-elementary hyperbolic group acting geometrically on a Gromov hyperbolic space $X$. Endow the boundary $\bd X$ with a visual probability measure $\mu$, and let $\lambda_\mu$ be the regular representation of $\hi$ on $\ell^2(\G,L^2(\bd X,\mu))$. Then the odd Fredholm module $(\lambda_\mu, P_{\ell^2\G})$ represents the boundary extension class $[\bd_\G]\in\K^1(\hi)$.
\end{thm}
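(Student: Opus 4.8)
The plan is to identify the Fredholm module $(\lambda_\mu, P_{\ell^2\G})$ with the Busby-type picture of the boundary extension. The key point is that the boundary extension class in $\K^1(\hi)$ can be computed from any essentially unitary-equivariant representation on a Hilbert space together with a projection whose compression of the representation yields, modulo compacts, a $*$-homomorphism with the right Busby invariant. Concretely, the Hilbert space $\ell^2(\G, L^2(\bd X, \mu))$ carries the regular representation $\lambda_\mu$, and the subspace $\ell^2\G$ --- identified with the closure of the constant functions in each $L^2(\bd X,\mu)$ fiber, spanned by $(\delta_g \otimes \mathbf{1})_{g\in\G}$ --- is the image of $P_{\ell^2\G}$. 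The first step is to observe that $\ell^2\G\subset \ell^2(\G,L^2(\bd X,\mu))$, equipped with the compression $P_{\ell^2\G}\lambda_\mu(\cdot)P_{\ell^2\G}$, carries a representation of $\hi$: namely, multiplication operators $f\in C(\bd\G)$ compress to the operator $\delta_g\mapsto f(gx_0)\,\delta_g$ for a chosen basepoint (or rather its barycentric evaluation against $\mu$; one checks the constant-function subspace is \emph{not} exactly invariant, but becomes so modulo compacts), and $\G$ acts by the left regular representation on $\ell^2\G$. This should be recognized as precisely the representation of $\hi$ on $\ell^2\G$ coming from the evaluation $C(\overline\G)\to \B(\ell^2\G)$, which is the ambient algebra of the boundary extension.

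Second, I would verify that the commutators $[P_{\ell^2\G}, \lambda_\mu(a)]$ are compact for $a$ ranging over a dense subalgebra --- this is already furnished by Theorem~\ref{general}, which gives $p$-summability, hence in particular compactness of these commutators, so $(\lambda_\mu, P_{\ell^2\G})$ is a genuine Fredholm module. The content is then to match the \emph{class}. The cleanest route is via the six-term exact sequence / Busby invariant: the extension $0\to\Comp(\ell^2\G)\to C(\overline\G)\rtimes\G\to \hi\to 0$ has Busby invariant $\hi\to \Calk$ obtained by compressing through $P_{\ell^2\G}$ and passing to the Calkin algebra, because $C(\overline\G)\rtimes\G$ is faithfully and nondegenerately represented on $\ell^2\G$ with $\Comp(\ell^2\G)=C_0(\G)\rtimes\G$ acting as the compacts. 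So I must show: the map $a\mapsto P_{\ell^2\G}\lambda_\mu(a)P_{\ell^2\G} + \Comp(\ell^2\G)$ agrees with this Busby invariant. For $a = f\cdot u_g$ with $f\in C(\bd\G)$ and $g\in\G$, compressing $\lambda_\mu(f u_g)$ to $\ell^2\G$ and computing modulo compacts reduces to showing that the ``Toeplitz-type'' operator $P_{\ell^2\G}\,M_f\,P_{\ell^2\G}$ (in each fiber) differs from the scalar-multiplication operator $\delta_g\mapsto f(gx_0)\delta_g$ by a compact operator --- this is the statement that $M_f$ acting on $L^2(\bd X,\mu)$ ``localizes'' at a point under conjugation by the sequence of translates $g\cdot x_0\to\xi\in\bd\G$, which is exactly the mechanism by which the boundary compactification enters. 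This localization is essentially the content of the estimates already developed for Theorem~\ref{general} (the measures $g_*\mu$ concentrate at boundary points), so it should be extractable from there.

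The main obstacle, I expect, is not the compactness bookkeeping but pinning down the \emph{identification of representations} precisely enough that the Busby invariants literally coincide rather than merely being unitarily equivalent up to compacts --- in particular, handling the fact that the subspace of constant functions is only asymptotically invariant, and choosing the basepoint evaluation $\overline\G\to\bd\G$ so that it is genuinely $\G$-equivariant modulo lower-order terms. A technically convenient alternative, which I would pursue if the direct matching gets unwieldy, is to use the nuclearity of $\hi$ and the uniqueness part of Voiculescu's theorem (Brown--Douglas--Fillmore): two extensions of $\hi$ by $\Comp$ are equivalent iff their Busby invariants are approximately unitarily equivalent, and since both our Fredholm module and the boundary extension restrict to the identity on $\Comp(\ell^2\G) = C_0(\G)\rtimes\G$ and induce the same map on $\K$-theory (which one can check on generators using the computations of Emerson \cite{Emerson} and Emerson--Meyer \cite{Emerson:Euler}), they must define the same class in $\K^1(\hi) = \mathrm{Ext}(\hi)$. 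Either way, the proof is ``soft'' once Theorem~\ref{general} is in hand; the real work is the localization estimate, which is already done.
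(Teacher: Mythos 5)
Your main route---compress $\lambda_\mu$ by $P_{\ell^2\G}$, observe that the compression of $\lambda_\mu(f)$ is multiplication by the $\G$-expectation $\E_\G(f)$ on $\ell^2\G$, and match this map modulo compacts with the Busby invariant of $0\to\Comp(\ell^2\G)\to C(\overline{\G})\rtimes\G\to\hi\to 0$---is essentially the paper's strategy; the paper even gets an on-the-nose identity rather than one modulo compacts, by exhibiting $a\mapsto P_{\ell^2\G}\lambda_\mu(a)P_{\ell^2\G}$ as $\pi\circ s$ for an explicit completely positive section $s$ obtained by gluing $\phi$ on $\bd\G$ to $\E_\G(\phi)$ on $\G$ (Theorem~\ref{representing boundary class}). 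So the architecture is right. The problem is the step you describe as ``the real work \dots\ which is already done.'' What you actually need is that $\E_\G(f)$ differs from the restriction to $\G$ of a continuous extension of $f$ to $\overline{\G}$ by a $C_0(\G)$ function, i.e.\ that $\int f\,d(g_*\mu)\to f(\omega)$ whenever $g\to\omega$ in $\overline{\G}$; equivalently, $g_*\mu\to\delta_\omega$ weak$^*$, with the limiting Dirac mass identified as the one at $\omega$. The estimates underlying Theorem~\ref{general} (decay of the deviation $\sigma_\G\phi$) only yield, via Proposition~\ref{Furstenberg type condition}, that the accumulation points of $\{g_*\mu\}$ are \emph{some} point masses; they do not tie the limit point to the direction in which $g$ leaves to infinity, and without that the compression could a priori implement a different boundary map and hence a different extension. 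The paper proves the sharper statement separately (Lemma~\ref{sharp delta}), using a further hyperbolicity inequality, $(g\xi,\omega)_o+(g^{-1}o,\xi)_o\geq(go,\omega)_o-3\delta$, together with the Ahlfors regularity of $\mu$; this (or an argument via the convergence dynamics of $\G$ on $\bd\G$) is a genuinely new ingredient your proposal omits.

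Your fallback route is also not viable as stated: knowing that two extensions of $\hi$ by $\Comp(\ell^2\G)$ induce the same maps on K-theory does not force them to agree in $\mathrm{Ext}(\hi)\cong\K^1(\hi)$ (the class is not determined by the induced K-theory maps alone---there is in general a kernel, e.g.\ an $\mathrm{Ext}^1_\Z$ contribution in the universal coefficient sequence), so this cannot substitute for the localization lemma. Once Lemma~\ref{sharp delta} is in hand, however, your Busby-invariant matching goes through and coincides with the paper's proof.
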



Theorem~\ref{represent} is discussed in Section~\ref{sec: boundary extension class}. The non-triviality of the boundary extension class $[\bd_\G]$ in $\K^1(\hi)$ is a point of great interest. For torsion-free $\G$, Poincar\'e duality and the Gysin sequence yield the following fact (see \S\ref{what is the boundary extension}).

\begin{thm}[from \cite{Emerson}, \cite{Emerson:Euler}]\label{tf} Let $\G$ be torsion-free. Then $[\bd_\G]$ is non-zero if and only if $\chi(\G)\neq\pm 1$. 
\end{thm}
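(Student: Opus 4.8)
The plan is to combine Emerson's Poincar\'e duality for $\hi$ with the Emerson--Meyer Gysin sequence, and thereby reduce the statement to a divisibility fact in $\K_0(\Cred\G)$.

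First I would set up the exact-triangle bookkeeping. Since $\hi$ is separable and nuclear, the boundary extension is semisplit (Choi--Effros), hence defines a distinguished triangle in the Kasparov category. The Gysin machinery of Emerson--Meyer identifies $C(\overline\G)\rtimes\G$ with $\Cred\G$ in $\KK$ and, under this identification, recognises that triangle as the mapping-cone triangle
\[
\C \xrightarrow{\ \Eul_\G\ } \Cred\G \longrightarrow \hi \xrightarrow{\ [\bd_\G]\ } \Sigma\C ,
\]
where $\Eul_\G\in\KK_0(\C,\Cred\G)=\K_0(\Cred\G)$ is the K-theoretic Euler class of the action: the descent of the equivariant Euler characteristic class of $\bd\G$, which exists precisely because $\bd\G$ carries an invariant Lipschitz (hence piecewise-smooth) structure, with no invariant metric required. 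By the splitting criterion for distinguished triangles, $[\bd_\G]=0$ if and only if $\Eul_\G\colon\C\to\Cred\G$ is a split monomorphism in $\KK$, i.e.\ if and only if there is $g\in\K^0(\Cred\G)=\KK_0(\Cred\G,\C)$ with $g\circ\Eul_\G=1$ in $\KK_0(\C,\C)=\Z$. Equivalently, using Poincar\'e duality $\K^1(\hi)\cong\K_0(\hi)$ -- under which $[\bd_\G]$ corresponds, by the construction of the fundamental class out of the boundary extension, to (a sign times) the unit class $[1_\hi]=\iota_*[1_{\Cred\G}]$ for $\iota\colon\Cred\G\hookrightarrow\hi$ -- the condition is that $[1_{\Cred\G}]$ lie in the cyclic subgroup $\Z\cdot\Eul_\G\subseteq\K_0(\Cred\G)$, since $\K_0(\hi)=\operatorname{coker}\bigl(\Eul_\G\colon\Z\to\K_0(\Cred\G)\bigr)$.

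Next I would feed in the single numerical input, and it is here that torsion-freeness is indispensable. Being torsion-free hyperbolic, $\G$ has a finite model for $B\G$ (a Rips complex), and I expect $\Eul_\G$ to be the image under the assembly isomorphism $\K_0(B\G)\xrightarrow{\sim}\K_0(\Cred\G)$ of the Euler characteristic class of the finite complex $B\G$; since $B\G$ is connected, that class is $\chi(B\G)\cdot[\mathrm{pt}]=\chi(\G)\cdot[\mathrm{pt}]$, and assembly carries $[\mathrm{pt}]$ to $[1_{\Cred\G}]$. Thus
\[
\Eul_\G=\chi(\G)\cdot[1_{\Cred\G}] \qquad\text{in }\K_0(\Cred\G).
\]
As a check on normalisation, applying the canonical trace gives $\tau_*(\Eul_\G)=\chi_{(2)}(\G)=\chi(\G)$ by Atiyah's $L^2$-index theorem; in particular $[1_{\Cred\G}]$ has infinite order, since $\tau_*[1_{\Cred\G}]=1$.

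The conclusion is then a one-line arithmetic argument. If $\chi(\G)=\pm1$, then -- lifting the trace to a class $[\tau]\in\K^0(\Cred\G)$ by the UCT (which holds for $\hi$ and $\Cred\G$ as $\G$ is hyperbolic) -- the class $g:=\chi(\G)[\tau]$ satisfies $g\circ\Eul_\G=\chi(\G)^2\,\tau_*[1_{\Cred\G}]=\chi(\G)^2=1$, so $\Eul_\G$ is a split monomorphism and $[\bd_\G]=0$. Conversely, if $[\bd_\G]=0$, then some $g\in\K^0(\Cred\G)$ satisfies $1=g\circ\Eul_\G=\chi(\G)\cdot\langle g,[1_{\Cred\G}]\rangle$, which forces $\chi(\G)=\pm1$. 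I expect the real work to lie not in this last step but in the two before it: establishing the Gysin sequence and, inside it, matching the connecting map of the boundary extension with the Euler class, plus the identification of $[\bd_\G]$ under Poincar\'e duality; and then the identity $\Eul_\G=\chi(\G)[1_{\Cred\G}]$, which is precisely where torsion-freeness -- finiteness of $B\G$, so that $\chi(\G)$ is an honest integer rather than a rational orbifold Euler characteristic -- is unavoidable.
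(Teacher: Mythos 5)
Your key inputs are the right ones -- Poincar\'e duality matching $[\bd_\G]$ with the unit class $[1_{\hi}]$, the Emerson--Meyer identity $\Eul_\G=\chi(\G)\cdot[1_{\Cred\G}]$ (this is indeed where torsion-freeness and the finite model for $B\G$ enter), and the trace argument giving $[1_{\Cred\G}]$ infinite order -- but the structural step that powers your deduction is false. The boundary extension $0\to\Comp(\ell^2\G)\to C(\overline{\G})\rtimes\G\to\hi\to 0$ is \emph{not}, after KK-identifications, the mapping-cone triangle $\C\xrightarrow{\Eul_\G}\Cred\G\to\hi\to\Sigma\C$: the unital inclusion $\Cred\G\to C(\overline{\G})\rtimes\G$ is not a $\KK$-equivalence, and $\K_0(\hi)$ is not $\operatorname{coker}\bigl(\Eul_\G\colon\Z\to\K_0(\Cred\G)\bigr)$. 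Concrete test: for $\G=\F_2$ one has $\K_0(\Cred\F_2)=\Z$ and $\chi=-1$, so your cokernel would be $0$, whereas $\K_0(C(\bd\F_2)\rtimes\F_2)\cong\Z^2$ (Cuntz--Krieger computation; equally, the displayed Gysin sequence forces a surjection onto $\K^1(B\F_2)=\Z^2$). The Emerson--Meyer Gysin triangle comes from compactifying a Rips complex $P$, i.e.\ from $0\to C_0(P)\rtimes\G\to C(P\cup\bd\G)\rtimes\G\to\hi\to0$, whose ideal is Morita equivalent to $C_0(B\G)$ for torsion-free $\G$, not to $\C$ -- that is exactly why the terms $\K^*(B\G)$ appear in the sequence. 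You have conflated this compactification with $\overline{\G}=\G\cup\bd\G$. Consequently your splitting criterion ``$[\bd_\G]=0$ iff $\Eul_\G$ is a split monomorphism'' is not established, and with it the forward direction $\chi(\G)=\pm1\Rightarrow[\bd_\G]=0$ collapses; that direction moreover leans on the UCT for $\Cred\G$, which is not known for general hyperbolic groups (e.g.\ those with property (T)) -- the UCT is fine for $\hi$, being a crossed product by an amenable action, but $\Cred\G$ is a different matter.

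None of that machinery is needed, and the paper's route avoids it. (i) By Poincar\'e duality for $\hi$, $[\bd_\G]$ is the dual of $[1_{\hi}]$, so $[\bd_\G]=0$ iff $[1_{\hi}]=0$; no splitting of any map has to be produced. (ii) The Gysin sequence is used only through its \emph{kernel} statement: the kernel of $i_*\colon\K_0(\Cred\G)\to\K_0(\hi)$ is the cyclic subgroup generated by $\chi(\G)\,[1_{\Cred\G}]$. Since $[1_{\Cred\G}]$ has infinite order, $[1_{\hi}]=i_*[1_{\Cred\G}]$ has order $|\chi(\G)|$ when $\chi(\G)\neq0$ and infinite order when $\chi(\G)=0$; hence $[\bd_\G]=0$ exactly when $\chi(\G)=\pm1$. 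Your second reformulation (``$[1_{\Cred\G}]\in\Z\cdot\Eul_\G$'') does state the correct condition, but you justify it by the faulty cokernel description rather than by the kernel computation, so as written the argument has a genuine gap in both directions of the equivalence.
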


If $\G$ is no longer torsion-free, then Poincar\'e duality is not available and the Gysin sequence is far less explicit. Nevertheless, we can partly extend Theorem~\ref{tf} to virtually torsion-free $\G$ by using Theorem~\ref{represent} and the fact that regular Fredholm modules are well-behaved under passage to finite-index subgroups. We get the following sufficient criterion for non-triviality, formulated in terms of the rational Euler characteristic:

\begin{thm}\label{vtf}
Let $\G$ be virtually torsion-free. If $\chi(\G)\notin 1/\Z$, then $[\bd_\G]$ is non-zero.
\end{thm}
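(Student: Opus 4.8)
The plan is to deduce Theorem~\ref{vtf} from Theorem~\ref{tf} by a transfer/induction argument along a finite-index torsion-free subgroup. Let $\G_0 \le \G$ be a torsion-free subgroup of finite index $m = [\G:\G_0]$; this exists since $\G$ is virtually torsion-free. Note $\G_0$ is itself a non-elementary hyperbolic group with the same Gromov boundary $\bd\G_0 = \bd\G$ (as a topological $\G_0$-space), since $\G_0$ is quasi-isometric to $\G$. There is a natural inclusion of crossed products $C(\bd\G)\rtimes\G_0 \into C(\bd\G)\rtimes\G = \hi$, and correspondingly a restriction (or \emph{transfer}) map on $\K$-homology
\begin{equation*}
\mathrm{res}^{\G}_{\G_0}\colon \K^1(\hi) \Too \K^1\bigl(C(\bd\G)\rtimes\G_0\bigr).
\end{equation*}
The key compatibility I would establish is that this map sends the boundary extension class $[\bd_\G]$ to the boundary extension class $[\bd_{\G_0}]$: indeed, restricting the defining extension $0 \to \Comp(\ell^2\G) \to C(\overline\G)\rtimes\G \to \hi \to 0$ along $\G_0$, and using that $\ell^2\G \cong \ell^2\G_0 \otimes \C^m$ as a $\G_0$-representation while $\overline\G$ and $\overline{\G_0}$ share the same boundary compactification, one identifies $\mathrm{res}^\G_{\G_0}[\bd_\G]$ with $m\cdot[\bd_{\G_0}]$ — or, more cleanly, one uses Theorem~\ref{represent}: the Fredholm module $(\lambda_\mu, P_{\ell^2\G})$ restricts on $C(\bd\G)\rtimes\G_0$ to a Fredholm module which, up to the splitting $\ell^2(\G, L^2(\bd X,\mu)) = \ell^2(\G_0, L^2(\bd X, \mu))^{\oplus m}$ (with $P_{\ell^2\G}$ becoming $P_{\ell^2\G_0}^{\oplus m}$ once one identifies constants appropriately — here I should be a little careful, as $P_{\ell^2\G}$ projects onto \emph{constant} functions and $\G_0$-equivariance is what matters), represents $m[\bd_{\G_0}]$ in $\K^1(C(\bd\G)\rtimes\G_0)$. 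This is where the phrase ``regular Fredholm modules are well-behaved under passage to finite-index subgroups'' from the introduction is invoked.

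Granting the identification $\mathrm{res}^\G_{\G_0}[\bd_\G] = m\,[\bd_{\G_0}]$, the argument concludes as follows. Suppose $[\bd_\G] = 0$ in $\K^1(\hi)$. Then $m\,[\bd_{\G_0}] = 0$ in $\K^1(C(\bd\G)\rtimes\G_0)$. Now $\G_0$ is torsion-free, so by Theorem~\ref{tf}, $[\bd_{\G_0}]\ne 0$ precisely when $\chi(\G_0)\ne\pm 1$. If I can show $[\bd_{\G_0}]$ is \emph{torsion-free} as an element of the abelian group $\K^1(C(\bd\G)\rtimes\G_0)$ (equivalently, that it is non-zero as soon as $\chi(\G_0)\ne\pm 1$, which is what Theorem~\ref{tf} already gives), then $m[\bd_{\G_0}]=0$ forces $[\bd_{\G_0}]=0$, hence $\chi(\G_0)=\pm 1$. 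But the Euler characteristic is multiplicative in finite-index subgroups, $\chi(\G_0) = m\,\chi(\G)$, so $\chi(\G) = \pm 1/m \in 1/\Z$, contradicting the hypothesis $\chi(\G)\notin 1/\Z$. Therefore $[\bd_\G]\ne 0$. To handle the torsion issue cleanly, rather than proving $[\bd_{\G_0}]$ is non-torsion it is enough to rerun the implication at the level of rational $\K$-homology: $m[\bd_{\G_0}] = 0$ already implies $[\bd_{\G_0}] = 0$ in $\K^1(C(\bd\G)\rtimes\G_0)\otimes\Q$, and in fact Theorem~\ref{tf}'s proof (via Poincar\'e duality and the Gysin sequence) shows $[\bd_{\G_0}]$ generates a subgroup on which the relevant index computation is governed exactly by $\chi(\G_0)\mp 1$, so vanishing of $[\bd_{\G_0}]$ rationally still forces $\chi(\G_0)=\pm 1$; this is the content we borrow from \cite{Emerson}, \cite{Emerson:Euler}.

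The main obstacle I anticipate is verifying the naturality/transfer statement $\mathrm{res}^\G_{\G_0}[\bd_\G] = m[\bd_{\G_0}]$ with full rigor. Two subtleties need care. First, the splitting $\ell^2(\G, L^2(\bd X,\mu)) \cong \bigoplus_{c\in\G_0\backslash\G} \ell^2(\G_0, L^2(\bd X,\mu))$ as representations of $C(\bd\G)\rtimes\G_0$ must be shown to carry the projection $P_{\ell^2\G}$ — projection onto constant $L^2(\bd X,\mu)$-valued functions on $\G$ — to the direct sum of copies of $P_{\ell^2\G_0}$, up to a compact (in fact finite-rank) perturbation, so that the $\K$-homology class is unaffected; the point is that ``constant functions on $\G$'' and ``constant functions on each coset'' differ only on the finite-dimensional space spanned by coset-indicator-type vectors, hence by a finite-rank perturbation the Fredholm module decomposes as claimed. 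Second, one should confirm that the same visual metric and visual probability measure $\mu$ serve $X$ regarded as a hyperbolic filling of $\G_0$ as well — this is immediate since $X$ with its $\G$-action restricts to a geometric $\G_0$-action on the same space, and $\mu$ depends only on $X$, not on the group. Once these two points are settled, Theorem~\ref{represent} applied to both $\G$ and $\G_0$ closes the gap, and the Euler-characteristic bookkeeping is routine. I would present the transfer compatibility as a short lemma preceding the proof of Theorem~\ref{vtf}.
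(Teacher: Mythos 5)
Your overall strategy is the same as the paper's: restrict along a finite-index torsion-free subgroup $\G_0\leq\G$, show $\mathrm{res}\,[\bd_\G]=m\,[\bd_{\G_0}]$ with $m=[\G:\G_0]$, and then use multiplicativity of the Euler characteristic. The transfer step you outline is essentially the paper's (its Propositions on finite-index restriction), though your ``first subtlety'' rests on a misreading: $P_{\ell^2\G}$ is not the projection onto functions constant in the group variable, but coefficient-wise integration, $\sum\psi_h\delta_h\mapsto\sum\big(\int\psi_h\,d\mu\big)\delta_h$; hence it decomposes \emph{exactly} along the coset splitting as $\bigoplus_i P_{\ell^2(\G_0t_i)}$, with no finite-rank perturbation. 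The genuine point to check there is different: after transporting a coset summand back to $\ell^2(\G_0,L^2(\bd X,\mu))$, the representation becomes the regular representation for the pushed-forward measure $t_*\mu$, and one needs $(\lambda_{t_*\mu},P_{\ell^2\G_0})$ to be homologous to $(\lambda_{\mu},P_{\ell^2\G_0})$; this is where comparability of $t_*\mu$ with $\mu$ (the Lipschitz action) and the operator-homotopy argument enter.

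The real gap is your concluding step. The class $[\bd_{\G_0}]$ is \emph{not} torsion-free in general: by Poincar\'e duality it is dual to the unit class $[1]\in\K_0(C(\bd\G_0)\rtimes\G_0)$, whose order the Gysin sequence computes to be exactly $|\chi(\G_0)|$ when $\chi(\G_0)\neq 0$ (and infinite when $\chi(\G_0)=0$). So for $\chi(\G_0)\neq 0$ the class is torsion and vanishes rationally; consequently $m[\bd_{\G_0}]=0$ does not force $[\bd_{\G_0}]=0$, and the claim that rational vanishing ``still forces $\chi(\G_0)=\pm1$'' is false --- e.g.\ for a genus-two surface group, $\chi(\G_0)=-2$, $[\bd_{\G_0}]$ is a nonzero class of order $2$ which is rationally zero. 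Theorem~\ref{tf} alone (vanishing iff $\chi=\pm1$) is too weak to finish; what is needed, and what the paper uses, is the exact order statement. With it the argument closes correctly: if $[\bd_\G]=0$ then $m[\bd_{\G_0}]=0$, so $\chi(\G_0)\neq 0$ (otherwise the class has infinite order) and $|\chi(\G_0)|$ divides $m$, whence $\chi(\G)=\chi(\G_0)/m=\pm1/k\in 1/\Z$, contradicting the hypothesis. Replace your torsion-freeness/rationalization step by this divisibility argument and the proof is complete.
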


In a forthcoming paper \cite{EN2} we describe a framework of `Dirac classes' in 
connection with smooth actions of discrete groups on smooth manifolds, and we show that 
in the case of a classical hyperbolic group acting on the boundary sphere of \(\Hyp^n\), 
the boundary extension class \([\bd_\G]\) discussed here, is the same as the Dirac class 
of the action. This allows us to compute in classical differential-topological terms, the 
map on \(\K\)-theory induced by the boundary extension class, in the case of hyperbolic 
groups of zero Euler characteristic (like discrete co-compact groups of isometries of 
hyperbolic \(3\)-space). 


\section{Regular Fredholm modules for crossed products}\label{paradigm}
In this section, we address the general question of constructing odd Fredholm modules for reduced $\Cstar$-crossed products. Given a crossed product $C(X)\rcross G$, where $G$ is a discrete group acting by homeomorphisms on a compact metrizable space $X$, we consider the regular representations determined by probability measures on $X$. Such a regular representation gives rise to an odd Fredholm module for $C(X)\rcross G$ as soon as the probability measure has a certain dynamical behavior under the action of $G$. Furthermore, the summability of these regular Fredholm modules can be conveniently described by a dynamical notion of standard deviation.

\subsection{Preliminaries on odd Fredholm modules} We start with a brief review of the relevant definitions. Our references are Connes \cite[Ch.4]{Connes} and Higson - Roe \cite[Ch.8]{HR}. 

Let $A$ be a unital $\Cstar$-algebra. An \emph{odd Fredholm module} for $A$ is a pair $(\pi,P)$, consisting of 

\begin{itemize}
\item[$\bullet$] a representation $\pi:A\to \mathcal{B}(H)$ of $A$ on a Hilbert space $H$, and 

\item[$\bullet$] a projection $P$ in $\mathcal{B}(H)$, 
\end{itemize}
\noindent such that the commutator $[P, \pi(a)]=P\pi(a)-\pi(a)P$ is compact for all $a\in A$. 

Let $p\geq 1$. An odd Fredholm module $(\pi,P)$ is \emph{$p$-summable}, respectively \emph{$p^+$-summable}, if $[P, \pi(a)]$ is in $\mathcal{L}^p(H)$, respectively in $\mathcal{L}^{p+}(H)$, for all $a$ in a dense subalgebra of $A$. Recall, the $L^p$-ideal $\mathcal{L}^p(H)$ and the weak $L^p$-ideal $\mathcal{L}^{p+}(H)$ are defined as follows. For a compact operator $T\in\mathcal{K}(H)$, let $\{\mu_n(T)\}_{n\geq 1}$ denote the sequence of eigenvalues of $|T|$, arranged in non-increasing order and repeated according to their multiplicity. The compactness of $T$ means that $\mu_n(T)\to 0$. Put $\mathcal{L}^p(H)=\big\{T\in \mathcal{K}(H):\; \sum \mu_n(T)^p<\infty\big\}$, and $\mathcal{L}^{p+}(H)=\big\{T\in \mathcal{K}(H):\; \mu_n(T)=O(n^{-\frac{1}{p}})\big\}$ for $p>1$. (The definition of $\mathcal{L}^{1+}(H)$ is slightly different, and it will not be used in this paper.) We have $\mathcal{L}^p(H)\subset\mathcal{L}^{p+}(H)\subset\mathcal{L}^q(H)$ for all $q>p$. For Fredholm modules, this means that $p$-summability implies $p^+$-summability, which in turn implies $q$-summability for all $q>p$; the latter property can be thought of as ``$p^{++}$-summability''.

The odd Fredholm modules for $A$ are the cycles in Kasparov's odd K-homology group $\K^1(A)$. A K-homology class identifies odd Fredholm modules up to \emph{unitary equivalence}, \emph{operator homotopy}, and \emph{addition of degenerates}. It is rather obvious what the first equivalence relation means. The second stipulates that two Fredholm modules $(\pi,P_0)$ and $(\pi,P_1)$ are equivalent if there is a norm-continuous path of projections $(P_t)_{t\in [0,1]}$ such that $(\pi,P_t)$ is a Fredholm module at all times $t\in [0,1]$. (It is worth emphasizing that the representation $\pi$ is fixed throughout an operator homotopy.) A degenerate Fredholm module is one for which the projection commutes with the representation. Under direct summation of K-homology classes, $\K^1(A)$ is an abelian group. 

\subsection{Regular Fredholm modules I} Let $G$ be a discrete group acting (by homeomorphisms) on a compact metrizable space $X$. The regular representations of the reduced crossed product $C(X)\rcross G$ arise as follows. Let $\mu$ be a Borel probability measure on $X$ with full support, meaning that no non-empty open set in $X$ is $\mu$-negligible. The faithful representation of $C(X)$ on $L^2(X,\mu)$ by multiplication induces a faithful representation of the reduced crossed product $C(X)\rcross G$ on $\ell^2(G, L^2(X,\mu))$. This latter representation, denoted $\lambda_\mu$, is the regular representation of the action $G\curvearrowright X$ with respect to $\mu$. Concretely, $\lambda_{\mu}$ is given as follows:
\begin{align*}
\lambda_{\mu}(\phi)\Big(\sum \psi_h \delta_h\Big)=\sum (h^{-1}.\phi)\psi_h \delta_h, \qquad \lambda_{\mu}(g)\Big(\sum \psi_h \delta_h\Big)=\sum \psi_h \delta_{gh}
\end{align*}
where $\phi\in C(X)$, $g\in G$, and $\sum \psi_h \delta_h\in \ell^2(G, L^2(X,\mu))$.

Thinking of $\ell^2 G$ as the constant-coefficient subspace of $\ell^2(G, L^2(X,\mu))$, we may take the corresponding projection $P_{\ell^2 G}$; is given by coefficient-wise integration: 
\[P_{\ell^2 G}\Big(\sum \psi_h \delta_h\Big)=\sum \bigg(\int \psi_h \D\mu\bigg) \delta_h.\]

We are interested in the event that $(\lambda_\mu, P_{\ell^2 G})$ is a Fredholm module -- or, even better, a summable one -- for $C(X)\rcross G$. To that end, we define dynamical versions of two standard probabilistic notions. On the probability space $(X,\mu)$, momentarily devoid of the $G$-action, there are two important numerical characteristics attached to a continuous functions on $X$: the expectation and the standard deviation. Specifically, for $\phi\in C(X)$ we put
\begin{align*}\E(\phi)=\int \phi \D\mu,\qquad \dev (\phi)=\sqrt{\E\big(|\phi|^2\big)-\big|\E(\phi)\big|^2}.
\end{align*}
Recalling now the $G$-action, we consider the following dynamical counterparts.
\begin{defn}
The \emph{$G$-expectation} and the \emph{$G$-deviation} of $\phi\in C(X)$ with respect to $\mu$ are the functions $\E_G(\phi):G\to\C$ and $\dev_G(\phi):G\to [0,\infty)$ given as follows:
\begin{align*}
\E_G(\phi)(g)=\int g^{-1}.\phi \D\mu=\int \phi \D g_*\mu,\qquad \dev_G (\phi)=\sqrt{\E_G\big(|\phi|^2\big)-\big|\E_G(\phi)\big|^2}
\end{align*}
The notations $\E_G^\mu$ and $\dev_G^\mu$ are occasionally used in order to emphasize the dependence on the probability measure $\mu$.
\end{defn}
An explicit, and useful, formula for the $G$-deviation is
\begin{align}\label{iint}\dev_G(\phi)(g)=\sqrt{\tfrac{1}{2}\iint |\phi(gx)-\phi(gy)|^2 \:d\mu(x)d\mu(y)}.
\end{align}
The Fredholmness and the summability of $(\lambda_\mu, P_{\ell^2 G})$ turn out to be very closely related to the decay of the $G$-deviation $\dev_G$. Such decay properties are features of the probability measure $\mu$, since the underlying topological dynamics $G\act X$ is fixed. Let us first make the following convenient definition.

\begin{defn}
We say that $\mu$ has \emph{$C_0$-deviation} if $\dev_G(\phi)\in C_0(G)$ for all $\phi\in C(X)$, respectively \emph{$\ell^p$-deviation} if $\dev_G(\phi)\in \ell^p G$ for all $\phi$ in a dense subalgebra of $C(X)$. 
\end{defn}

We now have:

\begin{prop}\label{from deviation to Fredholm} $(\lambda_\mu, P_{\ell^2 G})$ is
\begin{itemize}
\item[$\bullet$] a Fredholm module for $C(X)\rcross G$ if and only if $\mu$ has $C_0$-deviation;
\item[$\bullet$] a $p$-summable Fredholm module for $C(X)\rcross G$ provided that $\mu$ has $\ell^p$-deviation. For $p\geq 2$ the converse holds.
\end{itemize}
\end{prop}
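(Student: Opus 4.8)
The plan is to compute the commutator $[P_{\ell^2 G},\lambda_\mu(a)]$ explicitly and deduce everything from its block structure. First I would reduce from $C(X)\rcross G$ to $C(X)$. Since $\lambda_\mu(g)$ merely permutes the summands in $\ell^2(G,L^2(X,\mu))=\bigoplus_{h\in G}L^2(X,\mu)\delta_h$ while $P_{\ell^2 G}$ acts coefficientwise, one has $[P_{\ell^2 G},\lambda_\mu(g)]=0$; hence $[P_{\ell^2 G},\lambda_\mu(\phi g)]=[P_{\ell^2 G},\lambda_\mu(\phi)]\lambda_\mu(g)$ has the same singular values as $[P_{\ell^2 G},\lambda_\mu(\phi)]$, because $\lambda_\mu(g)$ is unitary. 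As $C(X)\rtimes_{\textup{alg}}G$ is a dense $*$-subalgebra of $C(X)\rcross G$ and both compactness and membership in $\LL^p$ survive norm limits and finite sums, it is enough to analyse $[P_{\ell^2 G},\lambda_\mu(\phi)]$ for $\phi\in C(X)$. Relative to the decomposition above, this operator is block-diagonal, with $h$-th block $[P_0,M_{h^{-1}.\phi}]$ acting on $L^2(X,\mu)$, where $P_0$ is the rank-one projection onto the constants and $M_f$ is multiplication by $f$.

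The key computation is then the following: $[P_0,M_f]=P_0M_f-M_fP_0$ has rank at most $2$ (the two summands have rank $\le 1$), and a short computation gives
\[
\|[P_0,M_f]\|_{\mathrm{HS}}^2=2\big(\E(|f|^2)-|\E(f)|^2\big)=2\,\dev(f)^2 .
\]
Taking $f=h^{-1}.\phi$ and using $\E(h^{-1}.\phi)=\E_G(\phi)(h)$ and $\E(|h^{-1}.\phi|^2)=\E_G(|\phi|^2)(h)$, the $h$-th block has Hilbert--Schmidt norm $\sqrt 2\,\dev_G(\phi)(h)$; since it has rank $\le 2$, its operator norm satisfies $\dev_G(\phi)(h)\le\|\,[P_0,M_{h^{-1}.\phi}]\,\|\le\sqrt 2\,\dev_G(\phi)(h)$. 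For the first bullet I would now use that a block-diagonal operator with finite-rank blocks is compact if and only if the operator norms of the blocks tend to $0$ along the cofinite subsets of $G$; by the two-sided bound this holds exactly when $\dev_G(\phi)\in C_0(G)$, and requiring it for all $\phi\in C(X)$ is the $C_0$-deviation condition.

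For $p$-summability in the ``provided'' direction: the nonzero singular values of $[P_{\ell^2 G},\lambda_\mu(\phi)]$ are obtained by collecting those of the blocks, at most two per $h\in G$ and each at most $\sqrt 2\,\dev_G(\phi)(h)$, whence $\|[P_{\ell^2 G},\lambda_\mu(\phi)]\|_{\LL^p}^p\le 2^{1+p/2}\,\|\dev_G(\phi)\|_{\ell^p}^p$. Here I would observe that $\{\phi\in C(X):\dev_G(\phi)\in\ell^p G\}$ is a $G$-invariant $*$-subalgebra of $C(X)$ — closedness under products follows from $\dev(fg)\le\|f\|_\infty\dev(g)+\|g\|_\infty\dev(f)$, closedness under $\phi\mapsto\bar\phi$ and under sums is immediate, and $G$-invariance holds because $\dev_G(g.\phi)$ is a left translate of $\dev_G(\phi)$. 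Thus an $\ell^p$-deviation hypothesis furnishes a dense, $G$-invariant $*$-subalgebra $\mathcal D\subseteq C(X)$ on which $\dev_G(\,\cdot\,)\in\ell^p$; then $\mathcal D\rtimes_{\textup{alg}}G$ is a dense $*$-subalgebra of $C(X)\rcross G$ on which the commutator lies in $\LL^p$, by the estimate above together with the reductions of the first paragraph.

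The converse, which the statement asserts for $p\ge 2$, is the point I expect to require the most care, and my plan is to use a pinching argument. Passing to the block-diagonal part does not increase any Schatten norm, and the block-diagonal part of $[P_{\ell^2 G},\lambda_\mu(a)]$ is exactly $[P_{\ell^2 G},\lambda_\mu(\mathcal E(a))]$, where $\mathcal E\colon C(X)\rcross G\to C(X)$ is the canonical conditional expectation. Running the Hilbert--Schmidt identity in reverse, and comparing the Hilbert--Schmidt norm of each rank-$\le 2$ block with its Schatten-$p$ quasinorm (the regime $p\ge 2$ makes this comparison transparent), one obtains $\|\dev_G(\mathcal E(a))\|_{\ell^p}^p\lesssim\|[P_{\ell^2 G},\lambda_\mu(a)]\|_{\LL^p}^p$. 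Hence if $(\lambda_\mu,P_{\ell^2 G})$ is $p$-summable via some dense subalgebra of $C(X)\rcross G$, its image under $\mathcal E$ is a dense subset of $C(X)$ contained in the $*$-subalgebra $\{\phi:\dev_G(\phi)\in\ell^p G\}$, so $\mu$ has $\ell^p$-deviation. Beyond the elementary Hilbert--Schmidt identity displayed above, the only genuinely non-formal inputs are the identification of the block-diagonal pinching with $\lambda_\mu\circ\mathcal E$ and the contractivity of pinching on the Schatten ideals.
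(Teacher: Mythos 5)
Your argument is correct, but it takes a genuinely different route from the paper's in both halves, so let me compare. For commutators with $C(X)$, the paper works with the corner $\Pi(\phi)=(1-P_{\ell^2 G})\lambda_\mu(\phi^*)P_{\ell^2 G}$ and proves the exact identity $|\Pi(\phi)|=\M(\dev_G\phi)$, so the singular values of the commutator are literally the values of the $G$-deviation (this exact corner formalism is what gets reused later, e.g.\ $\Pi(\psi)^*\Pi(\psi')=\M(\cov_G(\psi,\psi'))$ in the Chern character computation); you instead decompose $\ell^2(G,L^2(X,\mu))$ into the summands indexed by $G$ and estimate the rank-$\le 2$ blocks $[P_0,M_{h^{-1}.\phi}]$ via the Hilbert--Schmidt identity and the rank bound, obtaining two-sided bounds with constants, which is equally valid for Fredholmness and $p$-summability. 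The more substantial divergence is the converse: the paper bounds $\|\Pi(a)\delta_h\|_2\ge \dev_G\big(\mathbb{E}(a)\big)(h)$ and invokes the orthonormal-system inequality of \cite[Thm.~1.18]{Sim}, which is exactly where $p\ge 2$ is needed in their route; your pinching argument (block-diagonal compression is a Schatten contraction, and the compression of $[P_{\ell^2 G},\lambda_\mu(a)]$ is $[P_{\ell^2 G},\lambda_\mu(\mathbb{E}(a))]$, your $\mathcal{E}$) only needs a norm comparison on rank-$\le 2$ blocks, and since $\|T\|_{\mathrm{HS}}\le \sqrt{2}\,\|T\|\le\sqrt{2}\,\|T\|_{\LL^p}$ for such blocks, it in fact delivers the converse for every $p\ge 1$, slightly more than the statement claims; both routes then finish identically, via density of $\mathbb{E}(\mathcal{A})$ in $C(X)$ and the fact that $\{\phi:\dev_G\phi\in\ell^p G\}$ is a subalgebra. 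One incidental sentence of yours is false as stated --- membership in $\LL^p$ does not survive norm limits --- but your proof never uses it: for summability you only take finite sums inside $\mathcal{D}\rtimes_{\mathrm{alg}}G$, and norm limits are invoked only for compactness, where they are legitimate; you should simply rephrase that reduction.
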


\begin{proof} On the one hand, $P_{\ell^2G}$ commutes with the group restriction $\lambda_{\mu}|_G$. On the other hand, $P_{\ell^2G}$ compresses the space restriction $\lambda_{\mu}|_{C(X)}$ to multiplication by the $G$-expectation on $\ell^2G$, that is, for all $\phi\in C(X)$ we have
\begin{align}\label{projection compressed lambda}
P_{\ell^2G}\lambda_{\mu}(\phi)P_{\ell^2G}=\M(\E_G\phi).
\end{align}
For $\phi\in C(X)$, put
\[\Pi(\phi):=(1-P_{\ell^2G})\lambda_{\mu}(\phi^*)P_{\ell^2G}=-(1-P_{\ell^2G})[P_{\ell^2G}, \lambda_{\mu}(\phi^*)].\]
With this notation we have
\[[P_{\ell^2G}, \lambda_\mu(\phi)]=P_{\ell^2G}\lambda_{\mu}(\phi)(1-P_{\ell^2G})-(1-P_{\ell^2G})\lambda_{\mu}(\phi)P_{\ell^2G}=\Pi(\phi)^*-\Pi(\phi^*)\]
and $|\Pi(\phi)|=\M(\sigma_G\phi)$, as
\begin{align*}
\Pi(\phi)^*\Pi(\phi)&=P_{\ell^2G} \lambda_{\mu}(\phi)(1-P_{\ell^2G}) \lambda_{\mu}(\phi^*)P_{\ell^2G}\\
&=P_{\ell^2G} \lambda_{\mu}(\phi\phi^*)P_{\ell^2G}-\big(P_{\ell^2G} \lambda_{\mu}(\phi)P_{\ell^2G}\big)\big(P_{\ell^2G} \lambda_{\mu}(\phi^*)P_{\ell^2G}\big)\\
&=\M\big(\E_G(\phi\phi^*)-(\E_G\phi)(\E_G\phi^*)\big)=\M(\dev_G\phi)^2.
\end{align*}
The first point of the proposition is the following chain of equivalences: $(\lambda_\mu, P_{\ell^2 G})$ is a Fredholm module $\Leftrightarrow$ $[P_{\ell^2G}, \lambda_{\mu}(a)]$ is compact for all $a$ in the algebraic crossed product $C(X)\rtimes_\mathrm{alg} G$ $\Leftrightarrow$ $[P_{\ell^2G}, \lambda_{\mu}(\phi)]$ is compact for all $\phi\in C(X)$ $\Leftrightarrow$ $\Pi(\phi)$ is compact for all $\phi\in C(X)$ $\Leftrightarrow$ $|\Pi(\phi)|=\M(\sigma_G\phi)$ is compact for all $\phi\in C(X)$ $\Leftrightarrow$ $\dev_G \phi\in C_0(G)$ for all $\phi\in C(X)$. 

As for the second point, assume that $\mu$ has $\ell^p$-deviation. Then there is a $G$-invariant, dense $*$-subalgebra $A(X)\subseteq C(X)$ such that $\dev_G \phi\in \ell^p G$ for all $\phi\in A(X)$. Thus $|\Pi(\phi)|=\M(\sigma_G\phi)$ is a $p$-summable operator for all $\phi\in A(X)$; as above, we deduce that $[P_{\ell^2G}, \lambda_{\mu}(a)]$ is a $p$-summable operator for all $a\in A(X)\rtimes_\mathrm{alg} G$. Therefore $(\lambda_\mu, P_{\ell^2 G})$ is $p$-summable.

In what concerns the converse, let us start by showing that
\begin{align}\label{conditional expectation smaller}
\|\Pi(a)\delta_h\|_2\geq \sigma_G\big(\mathbb{E}(a)\big)(h)
\end{align}
for all $h\in G$ and $a\in C(X)\rcross G$. Here $\Pi(a)$ is the obvious generalization of our previous $\Pi(\phi)$, and $\mathbb{E}: C(X)\rcross G\onto C(X)$ denotes the standard conditional expectation. Using along the way the fact that $\Pi(\phi_{g'})^*\Pi(\phi_g)$ is a multiplication operator on $\ell^2G$, we have:
\begin{align*}
\la \Pi(a) \delta_h,\Pi(a)\delta_h\ra&=\sum_{g,g'}  \la \Pi(\phi_g) \delta_{gh},\Pi(\phi_{g'})\delta_{g'h}\ra=\sum_{g,g'}  \la \Pi(\phi_{g'})^*\Pi(\phi_g) \delta_{gh},\delta_{g'h}\ra\\
&=\sum_{g}  \la \Pi(\phi_{g})^*\Pi(\phi_g) \delta_{gh},\delta_{gh}\ra=\sum_{g}  \la \M(\dev_G\phi_g)^2 \delta_{gh},\delta_{gh}\ra\\
&=\sum_{g}  (\dev_G\phi_g)^2(gh)\geq (\dev_G\phi_1)^2(h)=\Big(\sigma_G\big(\mathbb{E}(a)\big)(h)\Big)^2
\end{align*}
Now assume that $(\lambda_\mu, P_{\ell^2 G})$ is a $p$-summable Fredholm module for $C(X)\rcross G$. Thus $[P_{\ell^2G}, \lambda_{\mu}(a)]$, equivalently $\Pi(a)$, is a $p$-summable operator for all $a$ in a dense subalgebra $\mathcal{A}$ of $C(X)\rcross G$. For $p\geq 2$, the $p$-summability of $\Pi(a)$ implies the $p$-summability of $\{\|\Pi(a)\xi_\iota\|_2\}_{\iota\in I}$ for any orthonormal system $(\xi_\iota)_{\iota\in I}$ (\cite[Thm.1.18]{Sim}). In particular $\{\|\Pi(a)\delta_h\|_2\}_{h\in G}$ is $p$-summable, so $\sigma_G\big(\mathbb{E}(a)\big)\in \ell^p G$ by \eqref{conditional expectation smaller}. Thus, we have shown that $\dev_G \phi\in \ell^p G$ for all $\phi\in \mathbb{E}(\mathcal{A})$. It follows that $\{\phi\in C(X):\dev_G \phi\in \ell^p G \}$, which is always a subalgebra of $C(X)$, is dense. We conclude that $\mu$ has $\ell^p$-deviation.
\end{proof}

The $C_0$-deviation of the probability measure $\mu$, which characterizes the Fredholmness of $(\lambda_\mu, P_{\ell^2G})$, admits in turn an appealing measure-theoretic interpretation. Let $\mathrm{Prob}(X)$ denote the space of Borel probability measures on $X$, and equip $\mathrm{Prob}(X)$ with the weak$^*$ convergence induced by $C(X)$: by definition, $\nu_\iota\to \nu$ if $\int \phi\D\nu_\iota\to \int \phi\D\nu$ for all $\phi\in C(X)$. Then $\mathrm{Prob}(X)$ is compact. In particular, push-forwards of $\mu$ by elements of $G$ must accumulate. 

\begin{prop}\label{Furstenberg type condition}
The probability measure $\mu$ has $C_0$-deviation if and only if $g_*\mu$ only accumulates to point measures in $\mathrm{Prob}(X)$ as $g\to\infty$ in $G$.
\end{prop}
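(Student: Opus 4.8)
The plan is to rephrase the $C_0$-deviation condition directly in terms of weak$^*$ convergence and then use compactness of $\mathrm{Prob}(X)$. First I would unpack what $\dev_G(\phi) \in C_0(G)$ means: using the formula $\dev_G(\phi)(g)^2 = \E\big(|g^{-1}.\phi|^2\big) - \big|\E(g^{-1}.\phi)\big|^2 = \int |\phi|^2 \D g_*\mu - \big|\int \phi \D g_*\mu\big|^2 = \Var_{g_*\mu}(\phi)$, the condition becomes: $\Var_{g_*\mu}(\phi) \to 0$ as $g \to \infty$, for every $\phi \in C(X)$. So the proposition amounts to: $\Var_{\nu_g}(\phi) \to 0$ for all $\phi$ (where $\nu_g = g_*\mu$) if and only if every weak$^*$ accumulation point of $(\nu_g)$ is a point mass.

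Next I would prove the two directions via a subsequence/net argument. For the ``only if'' direction: suppose $\dev_G$ is $C_0$, and let $\nu$ be an accumulation point, say $\nu_{g_\iota} \to \nu$ along a subnet with $g_\iota \to \infty$. Then for every $\phi$, $\int |\phi|^2 \D\nu - |\int \phi \D\nu|^2 = \lim_\iota \Var_{\nu_{g_\iota}}(\phi) = 0$ by hypothesis (since $g_\iota \to \infty$ and $\dev_G(\phi) \in C_0(G)$). So $\Var_\nu(\phi) = 0$ for all $\phi \in C(X)$. I would then conclude $\nu$ is a point mass: a probability measure with zero variance on every continuous function is supported on a single point (if the support contained two distinct points $x \neq y$, take $\phi$ with $\phi(x) \neq \phi(y)$ to get positive variance). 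For the ``if'' direction: suppose $\dev_G$ is not $C_0$, so there exist $\phi \in C(X)$, $\e > 0$, and a sequence $g_n \to \infty$ with $\dev_G(\phi)(g_n) \geq \e$ for all $n$, i.e.\ $\Var_{g_{n*}\mu}(\phi) \geq \e^2$. By compactness of $\mathrm{Prob}(X)$, pass to a subnet with $g_{n*}\mu \to \nu$ for some $\nu \in \mathrm{Prob}(X)$; then $\Var_\nu(\phi) = \lim \Var_{g_{n*}\mu}(\phi) \geq \e^2 > 0$, so $\nu$ is not a point mass, and $\nu$ is an accumulation point of $g_*\mu$ as $g \to \infty$, contradicting the hypothesis.

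The one point requiring a little care — and the likely main obstacle — is the continuity of the variance functional $\nu \mapsto \Var_\nu(\phi) = \int |\phi|^2 \D\nu - |\int \phi \D\nu|^2$ with respect to weak$^*$ convergence: this is genuinely continuous because both $\nu \mapsto \int |\phi|^2 \D\nu$ and $\nu \mapsto \int \phi \D\nu$ are weak$^*$ continuous (as $|\phi|^2, \phi \in C(X)$) and the map $z \mapsto |z|^2$ on $\C$ is continuous, so the composition and difference are continuous. I would state this cleanly once and reuse it in both directions. The rest is bookkeeping: reducing the quantifier ``for all $\phi \in C(X)$'' is automatic since the definition of $C_0$-deviation already quantifies over all of $C(X)$, and the characterization of point masses via vanishing variance is elementary.
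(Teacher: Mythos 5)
Your argument is correct and follows the same overall route as the paper: reinterpret $\dev_G(\phi)(g)$ as the standard deviation of $\phi$ with respect to $g_*\mu$, use weak$^*$ compactness of $\mathrm{Prob}(X)$, and note that the variance functional $\nu \mapsto \int |\phi|^2 \D\nu - |\int \phi \D\nu|^2$ is weak$^*$ continuous. The only place you diverge is the step identifying an accumulation point $\nu$ with vanishing variance as a point mass: the paper extends the identity $\int|\phi|^2\D\nu = |\int\phi\D\nu|^2$ from $C(X)$ to $L^2(X,\nu)$ by density, applies it to characteristic functions to see that $\nu$ is $\{0,1\}$-valued, and then produces a full-measure point via a shrinking sequence of full-measure balls; you instead stay with continuous functions and argue that zero variance forces $\phi$ to be $\nu$-a.e.\ constant, so two distinct points in the support of $\nu$ would give a contradiction. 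Your version is slightly more direct (it quietly uses that the support of a Borel probability measure on a compact metrizable space is nonempty and has full measure, which is the same standard fact the paper's ball argument encodes). You also write out the converse, which the paper leaves to the reader, and your contrapositive-plus-compactness argument for it is the intended one; just note that since $\mathrm{Prob}(X)$ is metrizable you may pass to a subsequence rather than a subnet, and that a subnet of a sequence tending to infinity in $G$ still tends to infinity, so $\nu$ really is an accumulation point of the required kind.
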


\begin{proof}
Assume that $\mu$ has $C_0$-deviation, and let $\nu\in \mathrm{Prob}(X)$ be the limit of a sequence $(g_n)_*\mu$ with $g_n\to \infty$ in $G$. For each $\phi\in C(X)$ we have, on the one hand, that $\dev_G\phi(g_n)$ converges to $0$, and on the other hand that $\dev_G\phi(g_n)$ converges to the standard deviation of $\phi$ with respect to $\nu$. Therefore, $\int |\phi|^2\D\nu=|\int \phi \D\nu|^2$ for all $\phi\in C(X)$. This continues to hold throughout $L^2(X,\nu)$, by the density of $C(X)$ in $L^2(X,\nu)$; indeed, Borel probability measures on compact metrizable spaces are automatically Radon. Taking characteristic functions of measurable sets, we see that $\nu$ is $\{0,1\}$-valued. But the only $\{0,1\}$-valued Borel probability measures on $X$ are the point measures: choosing a compatible metric on $X$, there exists a sequence of full-measure balls with radius converging to $0$, hence a point having full measure.

The converse implication is left to the reader.
\end{proof}

\begin{rem}
The measure-theoretic behavior described in the previous proposition bears some resemblance to the notion of Furstenberg boundary, which we now recall. A compact $G$-space $X$ is said to be \emph{strongly proximal} if, for every $\nu\in \mathrm{Prob}(X)$, the closure of $G\nu\subseteq \mathrm{Prob}(X)$ contains some point measure. A compact $G$-space $X$ is a \emph{boundary in the sense of Furstenberg} if it is minimal and strongly proximal, that is to say, the closure of every $G$-orbit in $\mathrm{Prob}(X)$ contains all the point measures. In our situation, we are requiring a pure-point boundary from a single orbit. Namely, the closure of the orbit $G\mu$ can only contain point measures besides $G\mu$ itself; if $X$ is minimal, then the closure of the orbit $G\mu$ consists of $G\mu$ and all the point measures. 
\end{rem}

\begin{rem} It is readily checked that $(\lambda_\mu, P_{\ell^2G})$ is a degenerate Fredholm module if and only if $\mu$ is a point measure; as $\mu$ is assumed to have full support, this happens if and only if $X$ is a singleton. 
\end{rem}

We now address one aspect of the condition $p\geq 2$, encountered in Proposition~\ref{from deviation to Fredholm}. Namely, we show that double ergodicity of $\mu$ is an obstruction to having $\ell^p$-deviation with $p\leq 2$.

\begin{prop}\label{2ergo}
Assume that all points in $X$ are $\mu$-negligible, and that $\mu\times \mu$ is ergodic for the diagonal action of $G$ on $X\times X$. Then the only functions $\phi\in C(X)$ with $\sigma_G\phi\in \ell^2 G$ are the constant functions. In particular, if $\mu$ has $\ell^p$-deviation then $p>2$.
\end{prop}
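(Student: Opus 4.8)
The plan is to use the explicit integral formula \eqref{iint} for the $G$-deviation together with a Borel--Cantelli / ergodicity argument on the product system $(X\times X,\mu\times\mu)$. First I would fix $\phi\in C(X)$ with $\dev_G\phi\in \ell^2 G$ and set $F(x,y)=|\phi(x)-\phi(y)|^2$, a continuous nonnegative function on $X\times X$. Formula \eqref{iint} says $\dev_G\phi(g)^2 = \tfrac12\int_{X\times X} F(gx,gy)\D(\mu\times\mu)(x,y)$, i.e.\ $2\,\dev_G\phi(g)^2$ is the integral of $F$ against $(g^{-1})_*(\mu\times\mu)$ under the diagonal action; equivalently it is $\int F\circ g\,\D(\mu\times\mu)$ where $g$ acts diagonally. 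The hypothesis then reads $\sum_{g\in G}\int_{X\times X} F\circ g\,\D(\mu\times\mu)<\infty$.

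Next I would invoke the converse direction of the Borel--Cantelli lemma in the ergodic setting: since $\sum_g \int F\circ g \,\D(\mu\times\mu)<\infty$, for $(\mu\times\mu)$-a.e.\ $(x,y)$ the series $\sum_{g} F(gx,gy)$ converges, so $F(gx,gy)\to 0$ as $g\to\infty$ in $G$. Now consider the Borel set $U_\e = \{(x,y): |\phi(x)-\phi(y)|^2 > \e\}$ for $\e>0$. What we have shown is that for a.e.\ $(x,y)$, the diagonal orbit $\{(gx,gy):g\in G\}$ meets $U_\e$ only finitely often. By the Poincar\'e recurrence theorem applied to the measure-preserving diagonal action of $G$ on $(X\times X,\mu\times\mu)$ — or more simply by ergodicity — a Borel set whose a.e.\ orbit eventually leaves it must be $(\mu\times\mu)$-null; otherwise, if $(\mu\times\mu)(U_\e)>0$, ergodicity would force a.e.\ orbit to return to $U_\e$ infinitely often, a contradiction. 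Hence $(\mu\times\mu)(U_\e)=0$ for every $\e>0$, which means $\phi(x)=\phi(y)$ for $(\mu\times\mu)$-a.e.\ $(x,y)$. Since $\mu$ has full support and $\phi$ is continuous, this forces $\phi$ to be constant. (Here is where the hypothesis that points are $\mu$-negligible enters indirectly: it guarantees $\mu$ is non-atomic so that the ergodicity of $\mu\times\mu$ is not vacuous, and it rules out the degenerate case $X$ finite.)

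For the final sentence: if $\mu$ has $\ell^p$-deviation with $p\le 2$, then since $\ell^p G\subseteq \ell^2 G$ we would have $\dev_G\phi\in\ell^2 G$ for all $\phi$ in a dense subalgebra of $C(X)$, forcing that dense subalgebra to consist only of constants by the above — impossible unless $X$ is a singleton, which is excluded since points are $\mu$-negligible. Therefore $p>2$.

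The main obstacle I anticipate is making the ergodicity step fully rigorous: the clean statement ``a positive-measure set whose a.e.\ orbit eventually leaves it is null'' needs the action to be measure-\emph{preserving} (it is, since $\mu\times\mu$ is assumed $G$-invariant for the diagonal action, which is implicit in ``$\mu\times\mu$ is ergodic'') and one should phrase it carefully — for instance via the function $h(x,y)=\sum_{g\in G}\mathbf 1_{U_\e}(gx,gy)$, which is a $G$-invariant measurable $[0,\infty]$-valued function, hence a.e.\ constant by ergodicity; it is a.e.\ finite by the Borel--Cantelli input, hence a.e.\ equal to some finite constant $N$, but $G$-invariance of the summand count combined with $N<\infty$ and $G$ infinite forces $\mathbf 1_{U_\e}=0$ a.e. I would present exactly this ``invariant counting function'' argument rather than appealing to Poincar\'e recurrence, as it is self-contained and handles the infinite group cleanly.
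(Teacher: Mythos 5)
Your argument has a genuine gap at its key step, and it comes from the assumption that the diagonal action preserves $\mu\times\mu$. You write that invariance ``is implicit in `$\mu\times\mu$ is ergodic'\,'' --- it is not: ergodicity here means only that $G$-invariant measurable sets are null or conull, and in the intended application (visual/Hausdorff measures on $\bd\G$) the measure is merely quasi-invariant; the paper stresses that there are \emph{no} invariant probability measures (the Type III situation). Both of your closing devices depend on invariance: Poincar\'e recurrence requires a measure-preserving action, and the implicit integration argument behind ``$N<\infty$ and $G$ infinite force $\mathbf{1}_{U_\e}=0$ a.e.'' amounts to $\int h\,\D(\mu\times\mu)=\sum_g(\mu\times\mu)(g^{-1}U_\e)=|G|\,(\mu\times\mu)(U_\e)$, which fails without invariance. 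Your ``invariant counting function'' variant correctly yields that $h(x,y)=\#\{g:(gx,gy)\in U_\e\}$ is strictly $G$-invariant and a.e.\ equal to a finite constant $N$ (Tonelli plus ergodicity need no invariance), but the final implication ``$h\equiv N<\infty$ and $(\mu\times\mu)(U_\e)>0$ is impossible'' is simply false for general non-singular ergodic actions: take $\Z$ acting on itself by translation with a fully supported probability measure and $U=\{0\}$; the action is ergodic, $U$ has positive measure, and $h\equiv 1$. What rules this out in the present setting is non-atomicity of $\mu\times\mu$ (this is the real role of ``points are $\mu$-negligible,'' not the cosmetic one you assign it): an ergodic non-singular action on a non-atomic space is conservative (Hopf decomposition), and conservativity gives infinite recurrence to positive-measure sets. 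So your route can be repaired, but only by adding quasi-invariance and a conservativity argument that you neither state nor prove.

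The paper avoids recurrence altogether. It sets $S(x,y)=\sum_{g\in G}|\phi(gx)-\phi(gy)|^2$, which is strictly $G$-invariant and integrable by \eqref{iint}, hence a.e.\ equal to a finite constant $C$ by ergodicity; then, assuming $\phi$ non-constant, it picks $c>0$ with $V=\{|\phi(x)-\phi(y)|>c\}$ open and non-empty, uses that $X\times X$ has no isolated points (full support plus negligible points) to find $N$ disjoint non-empty open sets $U_1,\dots,U_N\subseteq V$, notes that each saturation $G\cdot U_i$ is conull (ergodicity plus full support), and evaluates $S$ at a point of $\bigcap_i G\cdot U_i$ with $S=C$ to get $C\geq Nc^2$ for every $N$ --- a contradiction. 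This purely topological substitute for recurrence is what makes the proof work with no invariance hypothesis; I recommend you adopt it, or else explicitly add the non-singularity and conservativity ingredients your approach needs. Your deduction of the final sentence ($\ell^p$-deviation with $p\leq2$ would force a dense subalgebra of constants) is fine.
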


\begin{proof}
Note that $X$ is not a singleton, and it has no isolated points since $\mu$ has full support.

Assume that $\phi\in C(X)$ is a non-constant function with the property that $\sigma_G\phi\in \ell^2 G$. By \eqref{iint}, we have
\begin{align*}
\|\dev_G \phi\|^2_{\ell^2G}=\tfrac{1}{2}\iint \sum_{g\in G}|\phi(gx)-\phi(gy)|^2 \:d\mu(x)d\mu(y)
\end{align*}
Therefore $S(x,y)=\sum_{g\in G}|\phi(gx)-\phi(gy)|^2$ defines a $G$-invariant $L^2$ map on $X\times X$. By ergodicity, $S$ is a.e. constant, say $S(x,y)=C$ for almost all $(x,y)\in X\times X$. 

There exists $c>0$ such that the open subset $V=\{(x,y): |\phi(x)-\phi(y)|> c\}\subseteq X\times X$ is non-empty. As $X\times X$ has no isolated points, for each positive integer $N$ there exist disjoint, non-empty open subsets $U_1,\dots, U_N\subseteq V$. Using again the ergodicity assumption, we have that each $G\cdot U_i=\cup_{g\in G}\: gU_i$ is either negligible or of full measure. Since non-empty open subsets of $X\times X$ have positive measure, the latter alternative must occur. It follows that $\cap_{i=1}^N\: G\cdot U_i$ has full measure. Let $(x,y)$ in $\cap_{i=1}^N\: G\cdot U_i$ with $S(x,y)=C$. Thus, for each $i$ we have some $g_i\in G$ such that $(g_ix,g_iy)\in U_i$. Now the $g_i$'s are distinct since the $U_i$'s are disjoint, and $|\phi(g_ix)-\phi(g_iy)|>c$ since $U_i\subseteq V$, so
\[C=S(x,y)\geq \sum_{i=1}^N|\phi(g_ix)-\phi(g_iy)|^2>Nc^2.\]
As $N$ is arbitrary, this is a contradiction.
\end{proof}

\subsection{Regular Fredholm modules II}\label{compinv} Next, we investigate the behavior of our regular Fredholm modules with respect to two alterations, one on the space and the other on the group: changing the Borel probability measure on $X$, respectively passing to a finite-index subgroup of $G$.

Two Borel probability measures $\mu$ and $\mu'$ on $X$ are said to be \emph{comparable} if $\mu\asymp \mu'$, i.e., $C_1\mu\leq \mu'\leq C_2\mu$ for some constants $C_2\geq C_1>0$. Clearly, comparability is an equivalence relation which is finer than the usual equivalence of measures (recall, the latter means that each measure is absolutely continuous with respect to the other).

\begin{prop}\label{deviation for comparable measures}
Let $\mu$ and $\mu'$ be comparable probability measures on $X$ with full support. Then $\mu$ has $C_0$-deviation, respectively $\ell^p$-deviation, if and only if $\mu'$ has $C_0$-deviation, respectively $\ell^p$-deviation.
\end{prop}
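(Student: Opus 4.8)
The plan is to work directly with the explicit formula \eqref{iint} for the $G$-deviation and show that comparability of the measures produces a two-sided bound between $\dev_G^\mu(\phi)$ and $\dev_G^{\mu'}(\phi)$ as functions on $G$, at least when $\phi$ ranges over a convenient dense subalgebra. The key observation is that if $C_1\mu\le\mu'\le C_2\mu$, then $\mu'\times\mu'$ is comparable to $\mu\times\mu$ on $X\times X$ with constants $C_1^2$ and $C_2^2$, so from
\[
\dev_G^{\mu'}(\phi)(g)^2=\tfrac12\iint |\phi(gx)-\phi(gy)|^2\D\mu'(x)\D\mu'(y)
\]
and the pointwise nonnegativity of the integrand we immediately get
\[
C_1^2\,\dev_G^{\mu}(\phi)(g)^2\le \dev_G^{\mu'}(\phi)(g)^2\le C_2^2\,\dev_G^{\mu}(\phi)(g)^2
\]
for every $\phi\in C(X)$ and every $g\in G$. (One should note that $\mu'$ automatically has full support once $\mu$ does, by comparability, so the hypotheses of the earlier results apply to both.) This inequality is symmetric in $\mu,\mu'$, so it is really an equivalence of functions on $G$.

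From this pointwise comparison the conclusion is essentially immediate. For the $C_0$-deviation statement: $\dev_G^\mu(\phi)\in C_0(G)$ iff $\dev_G^{\mu'}(\phi)\in C_0(G)$, because the two functions differ by a bounded multiplicative factor uniformly in $g$; and this holds for all $\phi\in C(X)$ simultaneously, so one measure has $C_0$-deviation iff the other does. For the $\ell^p$-deviation statement: if $\mu$ has $\ell^p$-deviation, witnessed by a dense subalgebra $A(X)\subseteq C(X)$ on which $\dev_G^\mu(\phi)\in\ell^pG$, then the same subalgebra witnesses $\ell^p$-deviation for $\mu'$, since $\|\dev_G^{\mu'}(\phi)\|_{\ell^pG}\le C_2\|\dev_G^\mu(\phi)\|_{\ell^pG}<\infty$. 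The roles of $\mu$ and $\mu'$ are symmetric, so this is an "if and only if".

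Honestly, I expect there to be no serious obstacle here; the result is a soft consequence of formula \eqref{iint}, and the only thing to be careful about is bookkeeping: making sure the same dense subalgebra is used on both sides (so one does not need to re-prove density), and recording that comparability of $\mu$ and $\mu'$ upgrades to comparability of the product measures with squared constants. If one preferred to avoid \eqref{iint}, an alternative is to note that $\lambda_{\mu}$ and $\lambda_{\mu'}$ are related via the Radon–Nikodym machinery — the identity map on functions extends to a bounded invertible operator $L^2(X,\mu)\to L^2(X,\mu')$, intertwining the two crossed-product representations up to the projections $P_{\ell^2G}$ — and then invoke that a bounded similarity preserves $\mathcal{L}^p$-ideals; but the direct computation via \eqref{iint} is cleaner and more self-contained, so that is the route I would take.
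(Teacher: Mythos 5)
Your proposal is correct and follows exactly the paper's own (one-line) argument: by formula \eqref{iint}, comparability of $\mu$ and $\mu'$ gives comparability of the $G$-deviations pointwise on $G$, from which the $C_0$ and $\ell^p$ statements follow immediately. Your write-up simply spells out the squared constants and the reuse of the same dense subalgebra, which the paper leaves implicit.
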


\begin{proof}
By formula \eqref{iint}, comparable measures have comparable $G$-deviations.
\end{proof}

\begin{prop}\label{independence of comparable measures}
Let $\mu$ and $\mu'$ be comparable probability measures on $X$ with full support, and assume that they have $C_0$-deviation. Then the Fredholm modules $(\lambda_\mu, P_{\ell^2 G})$ and $(\lambda_{\mu'}, P_{\ell^2 G})$ are K-homologous.
\end{prop}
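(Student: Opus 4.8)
The plan is to join $\mu$ and $\mu'$ by the affine path of probability measures $\mu_t=(1-t)\mu+t\mu'$, $t\in[0,1]$, and to exhibit the resulting family of Fredholm modules as an operator homotopy between unitarily transported copies of $(\lambda_\mu,P_{\ell^2 G})$ and $(\lambda_{\mu'},P_{\ell^2 G})$. Since $\mu\asymp\mu'$, each $\mu_t$ is comparable to $\mu$ with constants independent of $t$; in particular every $\mu_t$ has full support, and by Proposition~\ref{deviation for comparable measures} it has $C_0$-deviation, so $(\lambda_{\mu_t},P_{\ell^2 G})$ is a Fredholm module for $C(X)\rcross G$ by Proposition~\ref{from deviation to Fredholm}. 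The obstacle to running an operator homotopy directly is that these modules live on different Hilbert spaces $\ell^2(G,L^2(X,\mu_t))$ with varying representations $\lambda_{\mu_t}$, whereas an operator homotopy demands a fixed representation. The device to get around this is the Radon--Nikodym unitary.

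Concretely, I would write $h=d\mu'/d\mu\in L^\infty(X,\mu)$, which is bounded above and bounded below by a positive constant because $\mu\asymp\mu'$, and set $f_t=(1-t)+t\,h=d\mu_t/d\mu$, likewise bounded above and below uniformly in $t$. Multiplication by $f_t^{1/2}$ is a unitary $V_t\colon L^2(X,\mu_t)\to L^2(X,\mu)$, and since multiplication operators mutually commute it intertwines the two multiplication representations of $C(X)$. Ampliating $V_t$ coefficientwise yields a unitary $\tilde V_t\colon \ell^2(G,L^2(X,\mu_t))\to\ell^2(G,L^2(X,\mu))$ which commutes with the $G$-shift, hence satisfies $\tilde V_t\,\lambda_{\mu_t}(a)\,\tilde V_t^*=\lambda_\mu(a)$ for all $a\in C(X)\rcross G$. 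Setting $Q_t=\tilde V_t\,P_{\ell^2 G}\,\tilde V_t^*$, the module $(\lambda_\mu,Q_t)$ is unitarily equivalent to $(\lambda_{\mu_t},P_{\ell^2 G})$ --- in particular it is again a Fredholm module --- while $Q_0=P_{\ell^2 G}$ and $(\lambda_\mu,Q_1)$ is unitarily equivalent to $(\lambda_{\mu'},P_{\ell^2 G})$. As a sanity check, one can compute that the off-diagonal blocks of $[Q_t,\lambda_\mu(\phi)]$ have norms $\dev_G^{\mu_t}(\phi)(g)$, which recovers the Fredholm condition along the path directly.

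The one thing left to verify is that $t\mapsto Q_t$ is norm-continuous. Pushing the constant functions through $V_t$, one sees that $Q_t$ is the coefficientwise ampliation of the rank-one projection onto the vector $e_t=f_t^{1/2}\in L^2(X,\mu)$, which is a unit vector since $\int f_t\D\mu=\mu_t(X)=1$; hence $\|Q_t-Q_s\|$ is bounded by a constant multiple of $\|e_t-e_s\|_{L^2(\mu)}$. Now $t\mapsto f_t$ is affine and all $f_t$ take values a.e. in a fixed interval $[\delta,M]$ with $\delta>0$; since $x\mapsto\sqrt{x}$ is Lipschitz on $[\delta,M]$, the assignment $t\mapsto e_t$ is Lipschitz from $[0,1]$ into $L^\infty(X,\mu)\subseteq L^2(X,\mu)$, which gives the required continuity. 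Thus $(Q_t)_{t\in[0,1]}$ is a norm-continuous path of projections with $(\lambda_\mu,Q_t)$ a Fredholm module at every $t$, i.e. an operator homotopy from $(\lambda_\mu,P_{\ell^2 G})$ to $(\lambda_\mu,Q_1)$; combined with the two unitary equivalences, this shows that $(\lambda_\mu,P_{\ell^2 G})$ and $(\lambda_{\mu'},P_{\ell^2 G})$ represent the same class in $\K^1(C(X)\rcross G)$. The main obstacle --- really the only place anything must be checked --- is this norm-continuity, and it is precisely where comparability of $\mu$ and $\mu'$ (rather than mere mutual absolute continuity) is used: the uniform two-sided bounds on the densities $f_t$ keep the square root Lipschitz. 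Everything else is formal, with the Fredholm property of each intermediate module handed to us by Propositions~\ref{deviation for comparable measures} and \ref{from deviation to Fredholm} via the unitary equivalence.
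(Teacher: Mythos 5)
Your proposal is correct and follows essentially the same route as the paper: conjugate by the Radon--Nikodym unitary so that everything lives in the fixed representation $\lambda_\mu$ on $\ell^2(G,L^2(X,\mu))$, then join the two ampliated rank-one projections by a norm-continuous path of such projections, using the estimate $\|Q_t-Q_s\|\leq 2\|e_t-e_s\|_{L^2(X,\mu)}$. The only real difference is cosmetic, namely the choice of path and how Fredholmness along it is checked: you follow the unit vectors $\sqrt{f_t}$ coming from the interpolated measures $(1-t)\mu+t\mu'$ and invoke Propositions~\ref{deviation for comparable measures} and \ref{from deviation to Fredholm} at each $t$, whereas the paper takes the path $\eta(t)=(\cos t)\,1+(i\sin t)\sqrt{\rho}$ in the unit sphere of $L^2(X,\mu)$ and gets the Fredholm property along the path directly from the identity $[P(\eta),\lambda_\mu]=M(\bar{\eta})[P_{\ell^2G},\lambda_\mu]M(\eta)$.
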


\begin{proof}
Let $\rho=d\mu'/d\mu$ be the Radon-Nikodym derivative, so $\rho$ is essentially bounded from above and from below by the comparability constants of $\mu$ and $\mu'$. First, we have a unitary
\begin{align*}
U: \ell^2(G, L^2(X, \mu')) \to \ell^2(G, L^2(X, \mu)), \qquad \sum \psi_h\delta_h\mapsto \sum \sqrt{\rho}\: \psi_h\delta_h
\end{align*}
which intertwines the corresponding regular representations of $C(X)\rcross G$, that is, $U \lambda_{\mu'}U^*=\lambda_{\mu}$. We may therefore exchange $(\lambda_{\mu'}, P'_{\ell^2 G})$ for $(\lambda_\mu, UP'_{\ell^2G}U^*)$, where the notation $P'_{\ell^2G}$ is used in order to emphasize the dependence on $\mu'$. We now claim that the Fredholm modules $(\lambda_\mu, UP'_{\ell^2G}U^*)$ and $(\lambda_\mu, P_{\ell^2G})$ are operator homotopic. Note that
\begin{align*}
UP'_{\ell^2G}U^*\big(\sum \psi_h\delta_h\big)= \sum \sqrt{\rho}\: \bigg(\int \sqrt{\rho}\:\psi_h \D\mu\bigg)\delta_h,
\end{align*}
and that $\sqrt{\rho}\in L^\infty (X,\mu)$ with $\|\sqrt{\rho}\|_{L^2(X,\mu)}=1$. For $\eta\in L^\infty (X,\mu)$ satisfying $\|\eta\|_{L^2(X,\mu)}=1$, let
\begin{align*}
&M(\eta): \ell^2(G, L^2(X, \mu)) \to \ell^2(G, L^2(X, \mu)), \qquad \sum \psi_h\delta_h\mapsto \sum \eta\: \psi_h\delta_h;\\
&P(\eta)= M(\bar{\eta})P_{\ell^2G}M(\eta),\qquad \sum \psi_h\delta_h\mapsto \sum \bar{\eta}\: \bigg(\int \eta\:\psi_h \D\mu\bigg)\delta_h.
 \end{align*}
Then $P(\eta)$ is a projection, namely the projection of $\ell^2(G, L^2(X, \mu))$ onto $M(\bar{\eta})\ell^2G$. We also have $[P(\eta),\lambda_{\mu}]=M(\bar{\eta})[P_{\ell^2G},\lambda_{\mu}]M(\eta)$ since $M(\eta)$ and $M(\bar{\eta})$ commute with $\lambda_{\mu}$, so $(\lambda_\mu, P(\eta))$ is a Fredholm module. On the other hand, we have $\|P(\eta_1)-P(\eta_2)\|\leq 2\|\eta_1-\eta_2\|_{L^2(X,\mu)}$; this follows from the fact that
\begin{align*}
\bigg\|\bar{\eta}_1\int \eta_1 \psi\D\mu - \bar{\eta}_2\int \eta_2 \psi\D\mu\bigg\|_2\leq 2\|\eta_1-\eta_2\|_2 \|\psi\|_2
\end{align*}
for all $\psi\in L^2(X, \mu)$. Now let $\eta(t)=(\cos t)\: 1+(i \sin t)\: \sqrt{\rho}$, where $0\leq t\leq\pi/2$. Then $\eta(t)\in L^\infty (X,\mu)$, and $\eta(t)$ describes a continuous path in the unit sphere of $L^2(X,\mu)$ between the constant function $1$ and $i\sqrt{\rho}$. Consequently, $P(\eta(t))$ describes a norm-continuous path between $P(1)=P_{\ell^2G}$ and $P(i\sqrt{\rho})=P(\sqrt{\rho})=UP'_{\ell^2G}U^*$.
\end{proof}

Let $H$ be a subgroup of $G$. Restriction of representations from $C(X)\rcross G$ to $C(X)\rcross H$ takes Fredholm modules for $C(X)\rcross G$ to Fredholm modules for $C(X)\rcross H$, and defines a natural homomorphism of abelian groups
\[\mathrm{res}: \K^1(C(X)\rcross G)\to \K^1(C(X)\rcross H).\]
If $\mu$ has $C_0$-deviation, then $(\lambda^G_\mu, P_{\ell^2G})$ is a regular Fredholm module for $C(X)\rcross G$ whose restriction is a Fredholm module for $C(X)\rcross H$. On the other hand, we can also form the regular Fredholm module $(\lambda^H_\mu, P_{\ell^2H})$ for $C(X)\rcross H$. The homological relation between these two Fredholm modules for $C(X)\rcross H$ is particularly simple in the case when $H$ has finite index in $G$.

\begin{prop}\label{finite index} Assume that $\mu$ has $C_0$-deviation, and that $\{g_*\mu\}_{g\in G}$ forms a family of mutually comparable measures. If $H$ is a finite-index subgroup of $G$, then
\begin{align*}
\mathrm{res}\:  \big[(\lambda^G_\mu, P_{\ell^2G})\big] =[G:H] \: \big[(\lambda^H_\mu, P_{\ell^2H})\big]
\end{align*}
in $\K^1(C(X)\rcross H)$.
\end{prop}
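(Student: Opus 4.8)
The plan is to restrict the regular representation $\lambda^G_\mu$ to $C(X)\rcross H$, decompose the underlying Hilbert space along the right cosets of $H$ in $G$, identify each piece with a regular Fredholm module for $C(X)\rcross H$ associated to a translate of $\mu$, and then use the comparability results of this subsection to replace each such translate by $\mu$ itself.

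Fix representatives $g_1=e,g_2,\dots,g_n$ for the right cosets, $G=\bigsqcup_{i=1}^n Hg_i$ with $n=[G:H]$. The left translation action of $H$ on $G$ has the $Hg_i$ as its orbits, so
\[
\ell^2(G,L^2(X,\mu))=\bigoplus_{i=1}^n \ell^2(Hg_i,L^2(X,\mu)),
\]
and the maps $\psi\delta_{hg_i}\mapsto\psi\delta_h$ identify the $i$-th summand with $\ell^2(H,L^2(X,\mu))$ carrying the left regular $H$-action. Both the multiplication operators $\lambda^G_\mu(\phi)$, $\phi\in C(X)$, and $P_{\ell^2G}$ act diagonally in the $\delta$-index, hence preserve this decomposition; so $(\lambda^G_\mu|_{C(X)\rcross H},P_{\ell^2G})$ is the direct sum of its compressions to the summands. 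Reading off the formulas, on the $i$-th summand $H$ acts by the left regular representation (with trivial coefficient action), $P_{\ell^2G}$ becomes coefficient-wise integration against $\mu$, and $\phi\in C(X)$ acts on the $\delta_h$-coefficient by multiplication by the function $x\mapsto\phi(hg_ix)$. Now let $V_i\colon L^2(X,(g_i)_*\mu)\to L^2(X,\mu)$ be the unitary $(V_if)(x)=f(g_ix)$, and apply it coefficient-wise to get $\mathbf{V}_i\colon \ell^2(H,L^2(X,(g_i)_*\mu))\to\ell^2(H,L^2(X,\mu))$. Using $\int\chi\D (g_i)_*\mu=\int (\chi\circ g_i)\D\mu$, a direct check shows $\mathbf{V}_i$ commutes with the left $H$-action, conjugates $\lambda^H_{(g_i)_*\mu}(\phi)$ to the $i$-th compression of $\lambda^G_\mu(\phi)$, and carries $P_{\ell^2H}$ (integration against $(g_i)_*\mu$) to coefficient-wise integration against $\mu$. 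Therefore $\bigoplus_i\mathbf{V}_i$ is a unitary equivalence of Fredholm modules for $C(X)\rcross H$:
\[
\bigl(\lambda^G_\mu|_{C(X)\rcross H},\,P_{\ell^2G}\bigr)\ \cong\ \bigoplus_{i=1}^n\bigl(\lambda^H_{(g_i)_*\mu},\,P_{\ell^2H}\bigr),
\]
whence $\mathrm{res}\,[(\lambda^G_\mu,P_{\ell^2G})]=\sum_{i=1}^n[(\lambda^H_{(g_i)_*\mu},P_{\ell^2H})]$ in $\K^1(C(X)\rcross H)$.

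It remains to see that each term equals $[(\lambda^H_\mu,P_{\ell^2H})]$. Since $\{g_*\mu\}_{g\in G}$ is a family of mutually comparable measures containing $\mu=e_*\mu$, each $(g_i)_*\mu$ is comparable to $\mu$ and has full support; moreover $\mu$ has $C_0$-deviation with respect to $H$ (its $H$-deviation functions are restrictions to $H$ of its $G$-deviation functions), hence so does each $(g_i)_*\mu$ by Proposition~\ref{deviation for comparable measures} applied to the action of $H$. Proposition~\ref{independence of comparable measures}, again for the action of $H$, then gives $[(\lambda^H_{(g_i)_*\mu},P_{\ell^2H})]=[(\lambda^H_\mu,P_{\ell^2H})]$ for each $i$, and summing over the $n=[G:H]$ cosets yields the assertion. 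The one place that requires care is the identification of the $i$-th summand: because $h$ and $g_i$ need not commute, one must track the order in $\phi(hg_ix)$ and verify that it is precisely the pushforward $(g_i)_*\mu$ — and not some other translate of $\mu$ — that makes $V_i$ unitary and intertwining; the rest is bookkeeping with the explicit formulas for $\lambda_\mu$ and $P_{\ell^2 G}$.
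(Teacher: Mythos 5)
Your argument is correct and is essentially the paper's own proof: the same decomposition of $\ell^2(G,L^2(X,\mu))$ along right $H$-cosets, the same identification of each summand with the regular module $(\lambda^H_{(g_i)_*\mu},P_{\ell^2H})$ via a coset/translation unitary, and the same appeal to comparability of $\{g_*\mu\}$ together with Proposition~\ref{independence of comparable measures} to replace $(g_i)_*\mu$ by $\mu$. Your explicit check that each $(g_i)_*\mu$ has full support and $C_0$-deviation over $H$ is a point the paper leaves implicit, but it is not a different route.
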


\begin{proof} Put $n=[G:H]$, and pick a transversal $t_1,\dots,t_n$ for the right $H$-cosets. The coset decomposition $\ell^2(G, L^2(X, \mu))=\oplus_{1}^n\: \ell^2(H t_i, L^2(X, \mu))$ yields
\begin{align*}
\mathrm{res}\:  \big[(\lambda^G_\mu, P_{\ell^2G})\big]=\oplus_{1}^n \big[(\lambda_{t_i}, P_{\ell^2(H t_i)})\big]
\end{align*}
in $\K^1(C(X)\rcross H)$, where $\lambda_{t_i}$ denotes the representation of $C(X)\rcross H$ on $\ell^2(H t_i, L^2(X, \mu))$. Now consider $(\lambda_t, P_{\ell^2(H t)})$ for $t\in \{t_1,\dots, t_n\}$. The unitary 
\begin{align*}
R_t: \ell^2(H, L^2(X, \mu))\to \ell^2(H t, L^2(X, \mu)), \qquad \sum \psi_h\delta_h\mapsto \sum \psi_h\delta_{ht}
\end{align*}
implements an equivalence between $(\lambda_t, P_{\ell^2(H t)})$ and $(R_t^*\lambda_t R_t, P_{\ell^2H})$. The representation $R_t^*\lambda_t R_t$ on $\ell^2(H, L^2(X,\mu))$ is given by
\begin{align*}
R_t^*\lambda_t R_t(\phi)\Big(\sum \psi_h \delta_h\Big)=\sum t^{-1}.(h^{-1}.\phi)\psi_h \delta_h, \qquad R_t^*\lambda_t R_t(h')\Big(\sum \psi_h \delta_h\Big)=\sum \psi_h \delta_{h'h}
\end{align*}
for $\phi\in C(X)$ and $h'\in H$. Next, the unitary 
\begin{align*}
V_t: \ell^2(H, L^2(X, \mu))\to \ell^2(H, L^2(X, t_*\mu)), \qquad \sum \psi_h\delta_h\mapsto \sum (t.\psi_h)\delta_h
\end{align*}
makes $(R_t^*\lambda_t R_t,  P_{\ell^2H})$ and $(\lambda^H_{t_*\mu},  P_{\ell^2H})$ equivalent. On the other hand, the assumption that $\{g_*\mu\}_{g\in G}$ consists of mutually comparable measures implies, in light of Proposition~\ref{independence of comparable measures}, that $(\lambda^H_{t_*\mu},  P_{\ell^2H})$ and $(\lambda^H_{\mu},  P_{\ell^2H})$ are homologous. Summarizing, we have
\begin{align*}
\big[(\mathrm{res}(\lambda^G_\mu), P_{\ell^2G})\big]=\oplus_{1}^n\: \big[(\lambda^H_\mu, P_{\ell^2H})\big]\end{align*}
in $\K^1(C(X)\rcross H)$, as desired.
\end{proof}

\subsection{Connes' Chern character of a regular Fredholm module}
Assume that $\mu$ has $\ell^p$-deviation. Let $A_p(X)=\{\phi\in C(X):\dev_G \phi\in \ell^p G \}$, a dense $G$-invariant $*$-subalgebra of $C(X)$. We denote by $A_p(X)\rtimes G$ the Banach $*$-algebra obtained by completing $A_p(X)\rtimes_{\textup{alg}} G$ under the norm $\|a\|_{(p)}:=\|a\|+\|[P_{\ell^2G}, \lambda_\mu(a)]\|_{\LL^p}$. Then $A_p(X)\rtimes G$ is a dense subalgebra of $C(X)\rcross G$ which is closed under holomorphic functional calculus. 

Let $n$ be an odd integer such that $n+1>p$. The Connes' Chern character of the $p$-summable Fredholm module $(\lambda_\mu, P_{\ell^2G})$ is represented by the following cyclic $n$-cocycle on $A_p(X)\rtimes G$:
\[ \Phi (a^0, a^1,\ldots , a^n) 
:= \Tr \Big((2P_{\ell^2G}-1)\big[P_{\ell^2G}, \lambda_\mu(a^0)\big]\big[P_{\ell^2G}, \lambda_{\mu}(a^1)\big] \cdots \big[P_{\ell^2G}, \lambda_{\mu}(a^n)\big]\Big)\]
Next, we give a more explicit formula for this cocycle. Clearly, it suffices to understand the case of basic elements $a^i=\phi_i g_i$, where $\phi_i\in A_p(X)$ and $g_i\in G$. To state our result, we define yet another dynamical version of a commonly used probabilistic notion. The \emph{$G$-covariance} of $\phi, \psi\in C(X)$ is the map $\cov_G(\phi,\psi):G\to\C$ given as follows:
\begin{align*}
\cov_G (\phi,\psi)=\E_G\big(\phi \psi^*\big)-\E_G(\phi)\E_G(\psi^*)
\end{align*}

\begin{prop}
Let $\phi_0, \phi_1, \dots , \phi_n\in A_p(X)$ and $g_0, g_1, \dots , g_n\in G$. If $g_0g_1\dots g_n \neq 1\in G$ then $\Phi(\phi_0g_0, \phi_1g_1, \dots , \phi_ng_n)=0$. If $g_0g_1\dots g_n = 1\in G$ then 
\begin{align*}
(-1)^\frac{n+1}{2}\Phi&(\phi_0g_0, \phi_1g_1, \dots , \phi_ng_n)
\\&=\sum_{h\in G}\Big(\cov_G(\psi_0,\psi_1)\: \cov_G(\psi_2, \psi_3)\: \dots \:\cov_G(\psi_{n-1},\psi_n)\Big)(h)\\
&-\sum_{h\in G}\Big(\cov_G(\psi_n,\psi_0)\:\cov_G(\psi_1, \psi_2)\:\dots \:\cov_G(\psi_{n-2},\psi_{n-1}) \Big)(h),
\end{align*}
where $\psi_0:=\phi_0$, $\psi_1:=g_0.\phi_1$, $\psi_2:=g_0g_1.\phi_2$, $\dots$, $\psi_n:=g_0g_1\dots g_{n-1}.\phi_n$.
\end{prop}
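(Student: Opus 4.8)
Write $P:=P_{\ell^2G}$, $\lambda:=\lambda_\mu$, $H:=\ell^2(G,L^2(X,\mu))$, and $g:=g_0g_1\cdots g_n$. The first step is to push all the group unitaries to the right of the product of commutators: since $P$ commutes with $\lambda(g_i)$ one has $[P,\lambda(\phi_ig_i)]=[P,\lambda(\phi_i)]\lambda(g_i)$, and using the covariance relation $\lambda(g)[P,\lambda(\phi)]\lambda(g)^{-1}=[P,\lambda(g.\phi)]$ one rewrites $[P,\lambda(\phi_0g_0)]\cdots[P,\lambda(\phi_ng_n)]=[P,\lambda(\psi_0)]\cdots[P,\lambda(\psi_n)]\,\lambda(g)$, where the $\psi_i$ are exactly the translates named in the statement. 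Since $n+1>p$ and $\psi_i\in A_p(X)$, each $[P,\lambda(\psi_i)]\in\LL^{n+1}$, so the product is trace class by H\"older and $\Phi(\phi_0g_0,\dots,\phi_ng_n)=\Tr\big((2P-1)[P,\lambda(\psi_0)]\cdots[P,\lambda(\psi_n)]\,\lambda(g)\big)$.

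Next I would dispose of the case $g\neq 1$. The point is that $(2P-1)[P,\lambda(\psi_0)]\cdots[P,\lambda(\psi_n)]$ preserves every summand of $H=\bigoplus_{h\in G}\delta_h\otimes L^2(X,\mu)$ --- because $P$ and each $\lambda(\psi_i)$ act coefficientwise --- whereas $\lambda(g)$ sends $\delta_h\otimes L^2(X,\mu)$ onto $\delta_{gh}\otimes L^2(X,\mu)$ with $gh\neq h$. So the composite has no nonzero diagonal block, and its trace vanishes.

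The substance is the case $g=1$, where $\lambda(g)=\mathrm{id}$. Here I would use the decomposition $[P,\lambda(\psi_i)]=\Pi(\psi_i)^*-\Pi(\psi_i^*)$ recorded in the proof of Proposition~\ref{from deviation to Fredholm}, with $\Pi(\psi_i)^*\colon(1-P)H\to PH$ and $\Pi(\psi_i^*)\colon PH\to(1-P)H$, together with the identity $\Pi(\phi')^*\Pi(\phi)=\M(\cov_G(\phi',\phi))$ on $PH=\ell^2G$ (the same computation that produced $\Pi(\phi)^*\Pi(\phi)=\M(\sigma_G\phi)^2$ there). Since $n+1$ is even, $[P,\lambda(\psi_0)]\cdots[P,\lambda(\psi_n)]$ is block diagonal for $H=PH\oplus(1-P)H$; expanding over the pieces above, a word is nonzero only when consecutive factors are composable, which forces strict alternation, so exactly two words survive. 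Collecting the signs from the $-\Pi(\psi_i^*)$ factors, the $PH\to PH$ part comes out as $(-1)^{(n+1)/2}\,\Pi(\psi_0)^*\Pi(\psi_1^*)\Pi(\psi_2)^*\cdots\Pi(\psi_{n-1})^*\Pi(\psi_n^*)$, which the multiplication identity (applied to consecutive pairs) turns into $(-1)^{(n+1)/2}\M\big(\cov_G(\psi_0,\psi_1)\cov_G(\psi_2,\psi_3)\cdots\cov_G(\psi_{n-1},\psi_n)\big)$; the $(1-P)H\to(1-P)H$ part is $(-1)^{(n+1)/2}\Pi(\psi_0^*)\Pi(\psi_1)^*\cdots\Pi(\psi_{n-1}^*)\Pi(\psi_n)^*$, and cycling its last factor $\Pi(\psi_n)^*$ to the front under the trace and applying the identity again makes it (an operator on $PH$ equal to) $(-1)^{(n+1)/2}\M\big(\cov_G(\psi_n,\psi_0)\cov_G(\psi_1,\psi_2)\cdots\cov_G(\psi_{n-2},\psi_{n-1})\big)$. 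Because $2P-1$ is $+1$ on $PH$ and $-1$ on $(1-P)H$, and $\Tr\M(F)=\sum_h F(h)$ on $\ell^2G$, subtracting the two contributions gives the stated formula; the two series converge absolutely since $|\cov_G(\psi,\psi')|\le\sigma_G(\psi)\sigma_G(\psi')$ and $\sigma_G(\psi_i)\in\ell^pG$ with $p<n+1$.

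I expect the only real obstacle to be the bookkeeping in this last step: confirming that exactly the two alternating words survive, pinning down their covariance-index patterns $(0,1),(2,3),\dots,(n-1,n)$ and $(n,0),(1,2),\dots,(n-2,n-1)$, and checking that the signs --- from the $-\Pi(\psi_i^*)$ factors, from $2P-1=\pm1$ on the two subspaces, and from the one cyclic permutation --- assemble into the single prefactor $(-1)^{(n+1)/2}$ and the relative minus sign between the two sums. Everything else is the formal rearrangement above plus the two identities already established.
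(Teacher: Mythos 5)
Your proposal is correct and follows essentially the same route as the paper's proof: push the group unitaries to the right to reduce to $\Phi(\psi_0,\dots,\psi_n g)$, expand each commutator as $\Pi(\psi_i)^*-\Pi(\psi_i^*)$ so that only the two alternating words survive, each with the prefactor $(-1)^{(n+1)/2}$, convert consecutive pairs into multiplication operators by $G$-covariances (with one cyclic permutation under the trace for the second word), and evaluate the trace over $\ell^2G$. The only cosmetic difference is that you dispose of the case $g_0\cdots g_n\neq 1$ at the outset by the off-diagonal block argument, whereas the paper carries $\lambda_\mu(g)$ through the whole computation and invokes $\Tr\big(\M(\theta)\lambda_\mu(g)\big)=0$ for $g\neq 1$ at the very end; both are equally valid.
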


\begin{proof}
Since $P_{\ell^2G}$ commutes with $\lambda_\mu$, we have $\lambda_\mu(g)\big[P_{\ell^2G}, \lambda_{\mu}(\phi h)\big]=\big[P_{\ell^2G}, \lambda_{\mu}(g.\phi)\big]\lambda_{\mu}(gh)$ for $g,h\in G$. Therefore
 \[\Phi(\phi_0g_0, \phi_1g_1,\dots , \phi_ng_n)=\Phi(\psi_0,\psi_1, \dots , \psi_n g)\]
 where $g:=g_0g_1\dots g_n$. Recall now the notation $\Pi(\cdot)$ from the proof of Proposition~\ref{from deviation to Fredholm}. As $[P_{\ell^2G}, \lambda_\mu(\psi)]=\Pi(\psi)^*-\Pi(\psi^*)$ and $\Pi(\psi)\Pi(\psi')=0$, we compute
\begin{align*}
(2P_{\ell^2G}-1)&\big[P_{\ell^2G}, \lambda_{\mu}(\psi_0)\big]\big[P_{\ell^2G}, \lambda_{\mu}(\psi_1)\big] \cdots \big[P_{\ell^2G}, \lambda_{\mu}(\psi_n)\big]\\ 
&= (-1)^\frac{n+1}{2} \Pi(\psi_0)^*\Pi(\psi_1^{*})\Pi(\psi_2)^*\dots \Pi(\psi_n^{*})-(-1)^\frac{n+1}{2}\Pi(\psi_0^{*})\Pi(\psi_1)^*\Pi(\psi_2^{*})\dots\Pi(\psi_n)^*
\end{align*}
Therefore
\begin{align*}
(-1)&^\frac{n+1}{2}\Phi(\psi_0,\psi_1, \dots , \psi_ng)\\
&=\Tr \Big(\Pi(\psi_0)^*\Pi(\psi_1^{*})\Pi(\psi_2)^*\dots \Pi(\psi_n^{*})\lambda_\mu(g)\Big)-\Tr \Big(\Pi(\psi_0^{*})\Pi(\psi_1)^*\Pi(\psi_2^{*})\dots\Pi(\psi_n)^*\lambda_{\mu}(g)\Big) 
\end{align*}
since $\Pi(\psi)$ is a $p$-summable operator whenever $\psi\in A_p(X)$. Using the fact that $\Pi(\psi)^*\Pi(\psi')$ is the multiplication operator $\M(\cov_G(\psi,\psi'))$ on $\ell^2G$, we have
\begin{align*}
\Tr &\Big(\Pi(\psi_0)^*\Pi(\psi_1^{*})\Pi(\psi_2)^*\dots \Pi(\psi_n^{*})\lambda_\mu(g)\Big)\\
&=\Tr \bigg(\M\Big(\cov_G(\psi_0,\psi_1^{*})\:\cov_G(\psi_2, \psi_3^*)\:\dots \:\cov_G(\psi_{n-1},\psi_n^{*})\Big)\lambda_{\mu}(g)\bigg);
\end{align*}
on the other hand, as $\Pi(\psi_n)^*\lambda_\mu(g)=\lambda_\mu(g)\Pi(g^{-1}\psi_n)^*$, we can write:
\begin{align*}
\Tr& \Big(\Pi(\psi_0^{*})\Pi(\psi_1)^*\Pi(\psi_2^{*})\dots\Pi(\psi_n)^*\lambda_{\mu}(g)\Big)\\
&= \Tr \Big(\Pi(\psi_0^{*})\Pi(\psi_1)^*\dots \Pi(\psi_{n-1}^{*}) \lambda_{\mu}(g)\Pi(g^{-1}\psi_n)^*\Big)
\\&= \Tr \Big(\Pi(g^{-1}\psi_n)^*\Pi(\psi_0^{*})\Pi(\psi_1)^*\dots \Pi(\psi_{n-1}^{*}) \lambda_{\mu}(g)\Big)\\
&= \Tr \bigg(\M\Big(\cov_G(g^{-1}\psi_n,\psi_0^{*})\:\cov_G(\psi_1, \psi_2^*)\:\dots\: \cov_G(\psi_{n-2},\psi_{n-1}^{*})\Big)\lambda_{\mu}(g)\bigg)
\end{align*}
Now $\Tr\big((\M\theta) \lambda_\mu(g)\big)$ equals $0$ if $g\neq 1$, respectively $\sum_{h\in G} \theta(h)$ if $g=1$. Thus, for $g\neq 1$ we get $\Phi(\psi_0,\psi_1, \dots , \psi_ng)=0$, and for $g= 1$ we get the claimed formula for $(-1)^\frac{n+1}{2}\Phi(\psi_0,\psi_1, \dots , \psi_ng)$.\end{proof}


\section{Metric-measure structure on the boundary of a hyperbolic space}
Our aim is to realize the paradigm described in the previous section in the case of a non-elementary hyperbolic group acting on its boundary. In this section, we prepare the ground by discussing the metric-measure structure on the boundary of a hyperbolic space in the sense of Gromov \cite{Gromov}. In \S\ref{bhs} we recall some basic facts on hyperbolic spaces and their boundaries. In \S\ref{vm} we focus on the family of visual metrics, and their induced Hausdorff measures, on the boundary of a hyperbolic space. Our main references for the (mostly standard) material in these first two subsections are Ghys - de la Harpe \cite{GdH} and V\"ais\"al\"a \cite[Section 5]{Vai}. In \S\ref{gga} we describe results of Coornaert \cite{Coo} on Hausdorff dimensions and Hausdorff measures for visual metrics, in the case when the hyperbolic space carries a geometric group action. Finally, in \S\ref{va} we discuss the notion of visual dimension.

\subsection{The boundary of a hyperbolic space}\label{bhs} Let $(X,d)$ be a proper geodesic space. The \emph{Gromov product} of  $x,y\in X$ with respect to $p\in X$ is defined by the formula
\[(x,y)_p: =\tfrac{1}{2}\big(d(p,x)+d(p,y)-d(x,y)\big).\]
The space $X$ is \emph{$\delta$-hyperbolic}, where $\delta\geq 0$, if
\[(x,y)_p\geq \min\big\{(x,z)_p,(y,z)_p\big\}-\delta\]
for all $x,y,z,p\in X$.

 Let $X$ be $\delta$-hyperbolic and fix a basepoint $o\in X$. A sequence $(x_i)$ \emph{converges to infinity} if $(x_i,x_j)_o\to\infty$ as $i,j\to\infty$. Two sequences $(x_i)$, $(y_i)$ converging to infinity are \emph{asymptotic} if $(x_i,y_i)_o\to\infty$ as $i\to\infty$. The asymptotic relation is an equivalence on sequences converging to infinity. A basepoint change modifies the Gromov product by a uniformly bounded amount, so convergence to infinity and the asymptotic relation are independent of the chosen basepoint $o\in X$. The \emph{boundary} of $X$, denoted $\bd X$, is the set of asymptotic classes of sequences converging to infinity. A sequence $(x_i)\subseteq X$ \emph{converges to} $\xi\in\bd X$ if $(x_i)$ converges to infinity, and the asymptotic class of $(x_i)$ is $\xi$. 

The Gromov product on $\bd X\times \bd X$ is defined as follows:
\begin{align*}
(\xi,\xi')_o:=\inf \big\{\liminf\; (x_i,x'_i)_o: \; x_i\to\xi,\; x'_i\to\xi'\big\}
\end{align*}
If $\xi=\xi'$, then $(\xi,\xi')_o=\infty$. If $\xi\neq \xi'$, then the sequence $(x_i,x'_i)_o$ is bounded whenever $x_i\to\xi$ and $x'_i\to\xi'$, hence $(\xi,\xi')_o<\infty$. It turns out that 
\begin{align}\label{liminf limsup}
(\xi,\xi')_o\leq \liminf\; (x_i,x'_i)_o\leq \limsup\; (x_i,x'_i)_o \leq (\xi,\xi')_o+ 2\delta \qquad (x_i\to\xi, x'_i\to\xi').
\end{align}
Similarly, we define the Gromov product on $X\times \bd X$ by setting
\begin{align*}
(x,\xi)_o:=\inf \big\{\liminf\; (x,x_i)_o: \; x_i\to\xi\big\}.
\end{align*}
In this case we have:
\begin{align}\label{liminf limsup 2}
(x,\xi)_o\leq \liminf\; (x,x_i)_o\leq \limsup\; (x,x_i)_o \leq (x,\xi)_o+ \delta\qquad (x_i\to\xi) 
\end{align}

In particular, \eqref{liminf limsup} and \eqref{liminf limsup 2} show that one could take sup instead of inf, or $\limsup$ instead of $\liminf$, in the definition of the Gromov product on $\bd X\times\bd X$, respectively $X\times \bd X$; all these variations would be within $2\delta$, respectively $\delta$, of each other.

\subsection{Visual metrics}\label{vm} Equipped with a canonical topology defined in terms of the Gromov product, the boundary $\bd X$ is compact and metrizable (see \cite[Ch.7, \S2]{GdH}). But the metric structure on $\bd X$, which is of great importance in this paper, is a more subtle issue. 

\begin{defn} A \emph{visual metric} on $\bd X$ is a metric $d_\e$ satisfying $d_\e\asymp\exp(-\e(\cdot,\cdot)_o)$ for some $\e>0$, called the \emph{visual parameter} of $d_\e$.
\end{defn}

This definition is independent of the chosen basepoint $o\in X$, and every visual metric determines the canonical topology on $\bd X$. In general, there is no natural choice of visual metric on $\bd X$. We thus take the \emph{visual gauge} $\mathcal{V}(\bd X)$ consisting of all visual metrics on $\bd X$, and we accept $\mathcal{V}(\bd X)$ in its entirety as giving the metric structure on $\bd X$.

\begin{lem}[Scaling]\label{scaling lemma}
Let $d_\e$ and $d_{\e'}$ be two visual metrics. Then:
\begin{itemize}
\item[$\bullet$] $d_\e$ and $d_{\e'}$ are H\"older equivalent: $d^{1/\e}_{\e}\asymp_{\e,\e'} d^{1/\e'}_{\e'}$;
\item[$\bullet$] the corresponding Hausdorff dimensions are inversely proportional to the visual parameter: $\e \:\hdim(\bd X,d_\e)=\e' \:\hdim (\bd X,d_{\e'})$;
\item[$\bullet$] the corresponding Hausdorff measures are comparable: $H_\e \asymp _{\e,\e'} H_{\e'}$.
\end{itemize}
\end{lem}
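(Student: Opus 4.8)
The three assertions are essentially formal consequences of the defining relation $d_\e \asymp \exp(-\e(\cdot,\cdot)_o)$, so the plan is to reduce everything to a single algebraic identity between the two visual metrics and then quote standard properties of Hausdorff dimension and measure under H\"older homeomorphisms.

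First I would establish the H\"older equivalence. Since $d_\e \asymp \exp(-\e(\cdot,\cdot)_o)$ and $d_{\e'} \asymp \exp(-\e'(\cdot,\cdot)_o)$, raising the first to the power $1/\e$ and the second to the power $1/\e'$ yields $d_\e^{1/\e} \asymp \exp(-(\cdot,\cdot)_o) \asymp d_{\e'}^{1/\e'}$, with comparability constants depending only on $\e$ and $\e'$ (and the implied constants in the two visual-metric relations). This is the relation $d_\e^{1/\e} \asymp_{\e,\e'} d_{\e'}^{1/\e'}$; equivalently, the identity map $(\bd X, d_{\e'}) \to (\bd X, d_\e)$ is bi-H\"older of exponent $\e'/\e$ in one direction and $\e/\e'$ in the other.

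For the second and third bullets I would invoke the classical behavior of Hausdorff measures under a snowflake/H\"older change of metric: if $d_1 \asymp d_2^{\alpha}$ on a metric space, then for every $s \ge 0$ the $s$-dimensional Hausdorff measure of $d_1$ is comparable to the $(\alpha s)$-dimensional Hausdorff measure of $d_2$ (this is immediate from the definition of Hausdorff measure via covers, since a cover by $d_2$-balls of radius $r$ is a cover by $d_1$-balls of radius $\asymp r^\alpha$, and the exponents match up by $s = \alpha s / \alpha$). Applying this with $\alpha = \e'/\e$ gives $\hdim(\bd X, d_\e) = (\e'/\e)\,\hdim(\bd X, d_{\e'})$, i.e. $\e\,\hdim(\bd X, d_\e) = \e'\,\hdim(\bd X, d_{\e'})$; and, taking $s$ equal to this common (rescaled) Hausdorff dimension, the corresponding Hausdorff measures $H_\e$ and $H_{\e'}$ are comparable, $H_\e \asymp_{\e,\e'} H_{\e'}$. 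One subtlety worth a sentence is that $\asymp$ between two metrics is slightly coarser than an exact snowflake relation $d_1 = d_2^\alpha$, but the multiplicative constants only affect the Hausdorff measures by multiplicative constants and do not move the dimension at all, so the conclusions are unaffected.

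The only place where any genuine care is needed — and hence the main (minor) obstacle — is making sure the Hausdorff dimension of $\bd X$ in each visual metric is actually finite, so that the statement ``$\e\,\hdim = \e'\,\hdim$'' is not the vacuous equality $\infty = \infty$; but this finiteness is a standard fact about boundaries of hyperbolic spaces (it follows, e.g., from an explicit upper bound on the number of balls of a given visual radius needed to cover $\bd X$, using local finiteness/properness of $X$), and it will in any case be recalled in the next subsection. Everything else is bookkeeping with comparability constants, which I would not write out in detail.
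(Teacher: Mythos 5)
Your argument is correct, and it is essentially the standard proof: the paper does not prove the Scaling Lemma at all, but treats it as standard material and defers to Ghys--de la Harpe \cite{GdH} and V\"ais\"al\"a \cite{Vai}, where precisely this reasoning appears — raise the defining relations $d_\e\asymp\exp(-\e(\cdot,\cdot)_o)$, $d_{\e'}\asymp\exp(-\e'(\cdot,\cdot)_o)$ to the powers $1/\e$, $1/\e'$ to get the snowflake (bi-H\"older) equivalence, then use the elementary transformation rule for Hausdorff measures under a snowflake change of metric ($d_1\asymp d_2^\alpha$ implies $H^s_{d_1}\asymp H^{\alpha s}_{d_2}$, with constants depending on $s$ and the comparability constants) to get both the dimension identity and the comparability of the critical-dimension Hausdorff measures. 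So on the main line there is nothing to object to.

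One side remark in your proposal is however incorrect, though it does not damage the proof. Finiteness of $\hdim(\bd X,d_\e)$ does \emph{not} follow from properness or local finiteness of $X$ alone: for instance, a locally finite rooted tree whose valence grows super-exponentially with the level is a proper geodesic $0$-hyperbolic space whose boundary has infinite Hausdorff dimension in every visual metric. The paper is careful on this point: immediately after the lemma it notes that the dimensions and measures attached to visual metrics could a priori degenerate to $0$ or $\infty$, and finiteness (indeed Ahlfors regularity) is established only in \S\ref{gga}, under the hypothesis of a geometric group action, via Coornaert's Patterson--Sullivan theory (Lemma~\ref{Ahlfors}). Fortunately the lemma does not need finiteness: your snowflake argument shows the two dimensions are simultaneously finite or infinite and satisfy $\e\,\hdim(\bd X,d_\e)=\e'\,\hdim(\bd X,d_{\e'})$ with the obvious conventions, and the comparability of Hausdorff measures at any fixed finite exponent holds with constants depending only on $\e,\e'$ and that exponent. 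So you should simply delete the claimed finiteness argument rather than lean on it.
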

A priori, Hausdorff dimensions and Hausdorff measures corresponding to visual metrics could degenerate to $0$ or $\infty$. In other words, a visual metric need not generate a meaningful measure-theoretic structure. As we shall see in \S\ref{gga}, a geometric group action on $X$ brings a remarkable measure-theoretic regularity to the visual structure of $\bd X$.

Visual metrics do exist, provided that the visual parameter is small with respect to $1/\delta$. Furthermore, there is a companion metric-like map on $X\times \bd X$, which is visual in the corresponding way:

\begin{lem}[Small visual range]\label{small visual range} Let $\e>0$ be such that $\e\delta<1/5$. Then:
\begin{itemize}
\item[$\bullet$] there exists a visual metric $d_\e$, having visual parameter $\e$;
\item[$\bullet$] there exists $d_\e :X\times \bd X\to [0,\infty)$ satisfying $d_\e(\cdot, \cdot) \asymp \exp(-\e(\cdot,\cdot)_o)$ on $X\times \bd X$, and 
\[|d_\e(x, \xi)-d_\e(x, \xi')|\leq d_\e(\xi, \xi') \leq d_\e(x, \xi)+d_\e(x, \xi')\qquad (x\in X;\xi,\xi'\in \bd X).\]

\end{itemize}
\end{lem}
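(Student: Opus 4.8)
The plan is to obtain both items simultaneously from a single ``chaining'' construction on the compactification $\overline{X}=X\cup\bd X$. Write $(u,w)_o$ for the Gromov product of $u,w\in\overline{X}$ — this makes sense in all three cases ($X\times X$, $X\times\bd X$, $\bd X\times\bd X$) — and set $\rho_\e(u,w):=\exp\big(-\e(u,w)_o\big)$, with the convention $\rho_\e(\xi,\xi)=0$ for $\xi\in\bd X$ (legitimate since $(\xi,\xi)_o=\infty$). The first thing to record is the \emph{almost-ultrametric inequality}
\[\rho_\e(u,w)\le e^{\e\delta}\max\{\rho_\e(u,v),\rho_\e(v,w)\}\qquad(u,v,w\in\overline{X}),\]
equivalently $(u,w)_o\ge\min\{(u,v)_o,(v,w)_o\}-\delta$. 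For $u,v,w\in X$ this is the defining $\delta$-inequality; the cases involving boundary points follow by approximating boundary points by sequences and using \eqref{liminf limsup} and \eqref{liminf limsup 2}. The point to be careful about is that the loss stays at $\delta$, not $2\delta$ or $3\delta$: whenever $(u,w)_o$ is an infimum of $\liminf$'s over approximating sequences, one is free to pick those sequences, and the only facts needed about the intermediate point $v$ are lower bounds of the form $\liminf(u,v_i)_o\ge(u,v)_o$ from \eqref{liminf limsup}, \eqref{liminf limsup 2} — never the $2\delta$-upper bounds.

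Next I would invoke the Frink-type metrization lemma in the form used for visual metrics (cf. \cite[\S5]{Vai}): a symmetric gauge $\rho$ on a set $Z$ satisfying an almost-ultrametric inequality with constant $\kappa$ close to $1$ satisfies a chain inequality $\rho(z_0,z_n)\le C(\kappa)\sum_{i=1}^n\rho(z_{i-1},z_i)$, so the chain function
\[d(u,w):=\inf\Big\{\textstyle\sum_{i=1}^n\rho(z_{i-1},z_i):z_0=u,\;z_n=w,\;z_i\in Z\Big\}\]
is a pseudometric with $C(\kappa)^{-1}\rho\le d\le\rho$. Since $\e\delta<1/5$ forces $e^{\e\delta}<\sqrt2$, this applies with $Z=\overline{X}$, $\rho=\rho_\e$, $\kappa=e^{\e\delta}$. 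The resulting $d_\e$ restricts to a genuine metric on $\bd X$: it is symmetric, satisfies the triangle inequality, and vanishes exactly on the diagonal because $\rho_\e(\xi,\xi')>0$ for $\xi\ne\xi'$ (as $(\xi,\xi')_o<\infty$); and $d_\e\asymp\exp(-\e(\cdot,\cdot)_o)$ on $\bd X\times\bd X$ makes it visual with parameter $\e$. Its restriction to $X\times\bd X$ is the companion map, again with $d_\e\asymp\exp(-\e(\cdot,\cdot)_o)$ there. The two displayed inequalities of the second item are then \emph{formal}: $d_\e(\xi,\xi')\le d_\e(x,\xi)+d_\e(x,\xi')$ is the triangle inequality applied to chains from $\xi$ to $\xi'$ passing through $x$, and $|d_\e(x,\xi)-d_\e(x,\xi')|\le d_\e(\xi,\xi')$ is its reverse-triangle consequence. (Note that the naive choice $d_\e=\rho_\e$ would not satisfy these with constant exactly $1$ — this is precisely why the chaining is needed, not only to get a metric on $\bd X$.)

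The one nontrivial point — and the main obstacle — is the lower bound $d_\e\gtrsim\rho_\e$, i.e.\ that chaining does not collapse distances; this is exactly where the smallness $\e\delta<1/5$ enters, via the dyadic-splitting argument deriving the chain inequality from the almost-ultrametric inequality with $\kappa\le\sqrt2$. Two things to check: first, that the argument tolerates interior chain vertices lying in $X$ (it does — it is a formal consequence of the almost-ultrametric inequality, which we have established on all of $\overline{X}$); second, that the convention $\rho_\e(x,x)=e^{-\e d(o,x)}\ne0$ for $x\in X$ does no harm (it does not — zero-length steps never shorten a chain, and the almost-ultrametric inequality still holds at coinciding points because $(u,v)_o\le\min\{d(o,u),d(o,v)\}$). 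Should one prefer, an alternative is to quote \cite[Ch.7]{GdH} or \cite[\S5]{Vai} for the existence of the visual metric $d_\e$ and run only the mixed construction; but the unified version above produces both objects at once and renders the compatibility inequalities transparent.
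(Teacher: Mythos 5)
Your proof is correct and is essentially the argument the paper itself relies on: the lemma is stated as standard material with a pointer to \cite{GdH} and \cite[Section 5]{Vai}, and your chain (Frink-type) construction of a pseudometric on all of $X\cup\bd X$ from $\rho_\e=\exp(-\e(\cdot,\cdot)_o)$ is exactly the construction carried out there (V\"ais\"al\"a's ``metametric'' on $X\cup\bd X$, which yields both bullets at once). Your two points of care are also the right ones and are handled correctly: the extended Gromov product loses only $\delta$ (not $2\delta$) because only the lower-bound halves of \eqref{liminf limsup}--\eqref{liminf limsup 2} enter, so $\e\delta<1/5$ does give $e^{\e\delta}<\sqrt{2}$ as the chain lemma requires, and the non-vanishing of $\rho_\e$ on the diagonal of $X$ is immaterial to that lemma.
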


The small range for visual parameters is by no means optimal, and we have to allow for the possibility that visual metrics may exist for parameters outside of the small range - which is, in fact, what we did by considering the visual gauge $\mathcal{V}(\bd X)$. Statements about visual metrics on $\bd X$ are sometimes proved by first dealing with visual parameters in the small range, and then extended using the Scaling Lemma~\ref{scaling lemma}.

\subsection{Geometric group actions}\label{gga} 
Now let $X$ be a hyperbolic space admitting a \emph{geometric} -- that is, isometric, proper and cocompact -- action of a group $\G$. Then $\G$ is hyperbolic and the boundary $\bd X$ serves as a topological model for $\bd \G$. In what follows, we assume that $\G$ is non-elementary, that is, $\G$ is neither finite, nor virtually infinite cyclic. In terms of the space $X$, the non-elementary hypothesis on $\G$ means that $\bd X$ is infinite as a set.

The action of $\G$ on $X$ extends to the boundary $\bd X$. We have $(gx,gx')_o\geq (x,x')_o-d(o,go)$ for all $g\in \G$ and $x,x'\in X$, which implies that 
\begin{align}\label{boundary action is Lipschitz}
(g\xi,g\xi')_o\geq (\xi,\xi')_o-d(o,go)
\end{align} 
for all $g\in \G$ and $\xi,\xi'\in \bd X$. Therefore $\G$ acts by Lipschitz maps on $(\bd X, d)$ for any choice of visual metric $d\in \mathcal{V}(\bd X)$.

The \emph{exponent} of the action of $\G$ on $X$ can be defined in two equivalent ways: as a growth exponent
\[e_{\G \act X}=\limsup_{R\to \infty}\Big( \frac{1}{R}\ln \big|\{g\in\G: d(o,go)\leq R\}\big|\Big),\]
or as a critical exponent
\[e_{\G \act X}=\inf \Big\{s \: :\: \sum_{g\in \G}\exp(-sd(o,go))<\infty\Big\}.\]
The exponent is finite and non-zero, and its definition does not depend on the basepoint $o\in X$. Furthermore, it does not depend on the group $\G$ acting geometrically on $X$. Viewed as an intrinsic characteristic of the space $X$, the exponent $e_{\G \act X}$ becomes the volume entropy.

The Patterson - Sullivan theory developed by Coornaert in \cite{Coo} plays a crucial role in understanding the growth of $\G$-orbits in $X$, and the Hausdorff dimensions and measures associated to visual metrics on $\bd X$. In what concerns the orbit growth, we have the following exponential asymptotics (\cite[Thm.7.2]{Coo}):
\begin{align}\label{growth} 
\big|\{g\in\G: d(o,go)\leq R\}\big|\asymp _o \exp(e_{\G\act X} R)
\end{align}

As for the visual structure of $\bd X$, we have the following important fact:

\begin{lem}\label{Ahlfors} 
Endow the boundary $\bd X$ with a visual metric $d_\e$. Then the Hausdorff dimension $\hdim(\bd X, d_\e)$ equals $e_{\G\act X}/\e$, and the normalized Hausdorff measure $\mu_\e$ satisfies
\begin{align}\label{polgrowth}
\mu_\e(B_r)\asymp r^{\hdim(\bd X, d_\e)}
\end{align} 
for all closed balls $B_r$ of radius $0\leq r\leq\mathrm{diam} (\bd X, d_\e)$.
\end{lem}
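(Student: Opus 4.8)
The statement is really the combination of two assertions: a \emph{dimension formula} ($\hdim(\bd X, d_\e) = e_{\G\act X}/\e$) and an \emph{Ahlfors-regularity estimate} \eqref{polgrowth} for the normalized Hausdorff measure. Both follow from a single, more fundamental fact, which is Coornaert's construction of a Patterson--Sullivan measure class on $\bd X$: a $\G$-quasi-invariant Borel probability measure $\nu$ on $\bd X$ whose Radon--Nikodym cocycle with respect to $g\in \G$ is comparable to $\exp\big(e_{\G\act X}\,\beta_\xi(o, g^{-1}o)\big)$, where $\beta$ is a (coarse) Busemann function. The plan is to first recall or cite this construction from \cite{Coo}, then translate the cocycle estimate into the ball-growth estimate $\nu(B_r)\asymp r^{e_{\G\act X}/\e}$ using the relationship between visual metrics and Gromov products — specifically the fact that a ball of radius $r\asymp \exp(-\e(\xi, \cdot)_o)$ centered at $\xi$ corresponds, via the shadow construction, to the shadow cast by a ball around a point at distance $\approx -\tfrac{1}{\e}\ln r$ from $o$ along a geodesic ray toward $\xi$. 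Coornaert's shadow lemma gives that the $\nu$-measure of such a shadow is comparable to $\exp(-e_{\G\act X}\cdot(\tfrac{1}{\e}\ln\tfrac{1}{r})) = r^{e_{\G\act X}/\e}$, which is exactly \eqref{polgrowth} for the measure $\nu$.

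Once one has a measure $\nu$ satisfying $\nu(B_r)\asymp r^Q$ with $Q = e_{\G\act X}/\e$, the dimension formula is a standard consequence of the mass distribution principle together with its converse (a Vitali-type covering argument): an upper Ahlfors bound $\nu(B_r)\gtrsim r^Q$ forces $\hdim \le Q$, and a lower Ahlfors bound $\nu(B_r)\lesssim r^Q$ forces $\hdim \ge Q$; moreover the same two bounds show that the $Q$-dimensional Hausdorff measure $H_\e$ is finite and positive, hence comparable to $\nu$. Consequently the \emph{normalized} Hausdorff measure $\mu_\e = H_\e/H_\e(\bd X)$ is comparable to $\nu$, and therefore inherits the estimate $\mu_\e(B_r)\asymp r^Q$. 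This is where the constant $Q = e_{\G\act X}/\e$ gets pinned down: it is read off from the exponent appearing in the Patterson--Sullivan cocycle, which in turn is the critical exponent of the Poincaré series, matching the orbit-growth asymptotics \eqref{growth}.

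The main obstacle is the careful bookkeeping of the quasi-isometry/quasi-comparability constants linking three different coarse objects: the visual metric $d_\e$ (defined only up to multiplicative comparison with $\exp(-\e(\cdot,\cdot)_o)$), the Gromov products on $\bd X\times\bd X$ and $X\times\bd X$ (which agree only up to the additive error $2\delta$, resp.\ $\delta$, from \eqref{liminf limsup}, \eqref{liminf limsup 2}), and the shadows of metric balls in $X$ (which behave well only for shadow parameters large compared to $\delta$). Translating ``radius $r$'' on the boundary into ``distance $\tfrac{1}{\e}\ln\tfrac{1}{r}$'' in $X$ introduces multiplicative constants depending on $\e$ and $\delta$, and one must check these do not accumulate. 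It is cleanest to do the whole argument first in the small visual range $\e\delta<1/5$, where Lemma \ref{small visual range} provides a genuine metric-like function $d_\e$ on $X\times\bd X$ satisfying the triangle-type inequality stated there, making the shadow/ball correspondence transparent; the general case then follows from the Scaling Lemma \ref{scaling lemma}, since passing from $\e$ to $\e'$ rescales $\hdim$ by the factor $\e/\e'$ — exactly as the formula $\hdim = e_{\G\act X}/\e$ predicts — and replaces $H_\e$ by a comparable measure. Finally, one observes that none of this depends on which group $\G$ acts geometrically on $X$, consistent with the fact, already noted in \S\ref{gga}, that $e_{\G\act X}$ is really an invariant of $X$.
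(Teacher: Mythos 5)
Your plan is correct and follows essentially the same route as the paper, which simply invokes Coornaert's Patterson--Sullivan results \cite[Prop.7.4, Cor.7.5, Cor.7.6]{Coo} for visual parameters in the small range and then extends to arbitrary parameters via the Scaling Lemma~\ref{scaling lemma}. Your sketch merely unpacks what is inside Coornaert's citation (the shadow lemma, the mass distribution principle, and the comparability of the Hausdorff and Patterson--Sullivan measures), which is consistent with, not different from, the paper's argument.
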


For sufficiently small visual parameters $\e$, this follows from \cite[Prop.7.4, Cor.7.5, Cor.7.6]{Coo}; to get Fact~\ref{Ahlfors} for arbitrary visual parameters, use Lemma~\ref{scaling lemma}. The polynomial growth of balls stipulated in \eqref{polgrowth} says that $(\bd X, d_\e)$ an \emph{Ahlfors regular} metric space (see \cite[pp.60-62]{Hei}).

\subsection{Visual dimension}\label{va} Let $X$ be a hyperbolic space. We cannot really assign a Hausdorff dimension to the boundary $\bd X$, since there is no canonical choice of metric on $\bd X$. Instead, we work with the following:

\begin{defn}
The \emph{visual dimension} of $\bd X$, denoted $\visdim \bd X$, is the infimal Hausdorff dimension of $(\bd X, d)$ as $d$ runs over the visual metrics on $\bd X$.
\end{defn}

The topological dimension of a metric space is no greater than the Hausdorff dimension with respect to a compatible metric; thus $\visdim \bd X\geq \topdim \bd X$. Actually, we have the following chain of inequalities:
\begin{align*}
\visdim \bd X\geq \textrm{A-confdim}\: \bd X\geq \textrm{confdim}\: \bd X\geq \topdim \bd X
\end{align*}
The \emph{conformal dimension} of $\bd X$, denoted $\textrm{confdim}\: \bd X$, is a notion of metric dimension which only depends on the quasi-isometry type of $X$. It resolves the metric ambiguity at the boundary by taking all possible metrics which are equivalent to a visual metric in a suitable sense. The original definition, due to Pansu, uses quasi-conformal equivalence; more recently, the closely related quasi-M\"obius (equivalently, quasi-symmetric) equivalence seems to be favored. Then $\textrm{confdim}\: \bd X$ is defined as the infimal Hausdorff dimension of $(\bd X, d)$ as $d$ runs over all metrics  which are equivalent to a visual metric. See \cite[Section 14]{KB} for an overview of the quasi-conformal version of $\textrm{confdim}\: \bd X$.

From a measure-theoretic point of view, the equivalence relation used for defining the conformal dimension is too loose. For hyperbolic spaces admitting geometric group actions, the notion of \emph{Ahlfors conformal dimension} strikes a compromise by restricting the equivalence relation to Ahlfors regular metrics. Namely, $\textrm{A-confdim}\: \bd X$ is defined as the infimal Hausdorff dimension of $(\bd X, d)$ as $d$ runs over all Ahlfors regular metrics on $\bd X$ which are quasi-M\"obius equivalent to a visual metric. The Ahlfors conformal dimension is a key concept for much of the current work on boundaries of hyperbolic spaces from the perspective of analysis on metric spaces. See \cite{Kle} for more on these matters.

\begin{rem}
If $X$ admits a geometric group action, then Lemma~\ref{Ahlfors} shows that the visual dimension measures the range of visual parameters:
\begin{align*}
\visdim \bd X=\frac{\textrm{volume entropy of } X}{\textrm{vispar}\:\bd X}
\end{align*}
where $\textrm{vispar}\: \bd X=\sup \{\e>0 : \e \textrm{ is the parameter of a visual metric on }\bd X\}$. The point is that one should think of $\textrm{vispar}\: \bd X$ as a way to encode the negative curvature of $X$. The simplest manifestation of this idea is the following fact: if $X$ is a CAT($\kappa$) space, where $\kappa<0$, then $\sqrt{-\kappa}$ is a visual parameter so $\textrm{vispar}\: \bd X\geq \sqrt{-\kappa}$. A coarse version of this fact was investigated by Bonk and Foertsch in \cite{BF}. They define a notion of \emph{asymptotic upper curvature} for hyperbolic spaces which is invariant under rough isometries, and which agrees with the metric notion of curvature: if $X$ is a CAT($\kappa$) space, then $X$ has asymptotic upper curvature $K_u(X)\leq \kappa$. Bonk and Foertsch go on to show that $\textrm{vispar}\: \bd X=\sqrt{-K_u(X)}$ for a hyperbolic space $X$.
\end{rem}

\begin{rem}
We do not know whether the visual dimension of the boundary is a quasi-isometry invariant. Even more pertinent for this paper would be to know whether the visual dimension of the boundary is an invariant for hyperbolic spaces carrying a geometric action of a given group.
\end{rem}

\section{Finitely summable odd Fredholm modules for $\hi$}\label{sec: finite summability}
We are ready to realize the paradigm described in Section \ref{paradigm}. Let $\G$ be a non-elementary hyperbolic group, and let $(X,d)$ be a hyperbolic space carrying a geometric action of $\G$. Pick a visual metric $d_\e\in\mathcal{V}(\bd X)$, and let $\mu_\e$ denote the corresponding normalized Hausdorff measure on $\bd X$.

Let $\mathrm{H\ddot{o}l}_\alpha(\bd X, d_\e)$ be the algebra of complex-valued $\alpha$-H\"older functions, where $\alpha>0$. Then $\mathrm{H\ddot{o}l}_\alpha(\bd X, d_\e)$ is $\G$-invariant, as $\G$ acts by Lipschitz maps on $(\bd X, d_\e)$. When $\alpha\leq 1$, $\mathrm{H\ddot{o}l}_\alpha(\bd X, d_\e)$ contains the algebra of Lipschitz functions $\Lip(\bd X, d_\e)$, in particular it is dense in $C(\bd X)$.

Recalling \eqref{iint}, we have
\begin{align*}\dev_\G \phi(g)\leq\|\phi\|_{\mathrm{H\ddot{o}l}_\alpha}\;\sqrt{\tfrac{1}{2}\iint d_\e(g\xi,g\xi')^{2\alpha} \:\:d\mu_\e(\xi) d\mu_\e(\xi')}.
\end{align*}
for each $\phi\in \mathrm{H\ddot{o}l}_\alpha(\bd X, d_\e)$. This leads us to the following

\begin{lem}\label{double integral} Let $\alpha>0$ and pick $o\in X$. 
\begin{itemize}
\item[i)]
There exists $C>0$ such that, for all $g\in\G$, we have
\[\iint d_\e(g\xi,g\xi')^{2\alpha}\;d\mu_\e(\xi) d\mu_\e(\xi')\geq C \exp(-2\alpha \e\: d(o,go)).\]
\item[ii)]
Assume $\alpha<\tfrac{1}{2}\hdim (\bd X,d_\e)$. Then there exists $C'>0$ such that, for all $g\in\G$, we have
\[\iint d_\e(g\xi,g\xi')^{2\alpha}\;d\mu_\e(\xi) d\mu_\e(\xi')\leq C' \exp(-2\alpha \e\: d(o,go)).\]
\end{itemize}
\end{lem}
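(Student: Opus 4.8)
The plan is to estimate the double integral $\iint d_\e(g\xi,g\xi')^{2\alpha}\,d\mu_\e(\xi)d\mu_\e(\xi')$ by relating the behaviour of the visual metric under the action of $g$ to the displacement $d(o,go)$, and then exploiting the Ahlfors regularity of $\mu_\e$ from Lemma~\ref{Ahlfors}. The starting point for both bounds is the standard cocycle inequality for the Gromov product: one has $(g\xi,g\xi')_o = (\xi,\xi')_{g^{-1}o}$, and comparing the basepoints $o$ and $g^{-1}o$ gives, up to an additive error controlled by $\delta$,
\[
(g\xi,g\xi')_o \approx (\xi,\xi')_o - \tfrac12\big(b_\xi(g^{-1}o) + b_{\xi'}(g^{-1}o)\big),
\]
where $b_\xi$ denotes (a coarse version of) the Busemann function; in any case $(\xi,\xi')_o - d(o,go) \le (g\xi,g\xi')_o \le (\xi,\xi')_o + d(o,go)$ up to uniform constants, which after exponentiating by $-\e$ translates into
\[
e^{-\e\, d(o,go)}\, d_\e(\xi,\xi') \;\lesssim\; d_\e(g\xi,g\xi') \;\lesssim\; e^{\e\, d(o,go)}\, d_\e(\xi,\xi')
\]
with implied constants depending only on $X$ and $\e$. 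Raising to the power $2\alpha$ and integrating, the upper inequality in this display immediately yields part (ii) \emph{provided} $\iint d_\e(\xi,\xi')^{2\alpha}\,d\mu_\e\,d\mu_\e<\infty$, and the lower inequality combined with a change of variables (the action of $g^{-1}$ is measure-theoretically comparable because the $\mu_\e$-density of $g_*\mu_\e$ is controlled, again via Lemma~\ref{Ahlfors}) gives part (i) — but this naive approach loses a factor, so a more careful argument is needed, localising the integral near the ``north pole'' of $g$.

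More precisely, for part (i) the key observation is that a positive proportion of pairs $(\xi,\xi')$ have $(\xi,\xi')_o$ bounded (say $d_\e(\xi,\xi')\ge \tfrac12\,\mathrm{diam}(\bd X,d_\e)$), and for such pairs the Gromov product $(g\xi,g\xi')_o$ cannot exceed roughly $d(o,go)$ — indeed $(g\xi,g\xi')_o \le d(o,go) + (\xi,\xi')_o + O(\delta)$, so on this positive-measure set one has $d_\e(g\xi,g\xi')\gtrsim e^{-\e d(o,go)}$. Since $\mu_\e\times\mu_\e$ of this set is bounded below by a positive constant independent of $g$ (here again Ahlfors regularity of $\mu_\e$ is what guarantees that the diagonal neighbourhood does not carry all the mass), the lower bound in (i) follows with a uniform $C>0$. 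For part (ii) the hypothesis $\alpha < \tfrac12\,\hdim(\bd X,d_\e)$ enters precisely to ensure integrability: by the Ahlfors regularity \eqref{polgrowth}, for fixed $\xi$ one estimates $\int d_\e(\xi,\xi')^{2\alpha}\,d\mu_\e(\xi')$ by summing over dyadic annuli $\{r/2 < d_\e(\xi,\xi') \le r\}$, each of $\mu_\e$-mass $\lesssim r^{Q}$ with $Q = \hdim(\bd X,d_\e)$, giving a geometric series $\sum_k 2^{-k(2\alpha)} 2^{-kQ}\cdot 2^{k(2\alpha)}$... one must be careful: the annulus of radius $\sim 2^{-k}$ contributes $\sim 2^{-2\alpha k}\cdot 2^{-Qk}$ only if one also accounts for the number of annuli, but the upshot is that the integral converges iff $2\alpha > 0$ together with the tail behaviour controlled by $Q$; in fact for the \emph{finiteness} one only needs $2\alpha>0$, while the role of $2\alpha < Q$ is to make the \emph{upper} bound in (ii) match the exponential rate rather than be dominated by a worse term coming from the region where $g\xi, g\xi'$ are forced close together. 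I would carry this out by splitting $\bd X\times\bd X$ according to the size of $(g\xi,g\xi')_o$ relative to $d(o,go)$, using the shadow lemma / Ahlfors estimate to bound the measure of the ``deep'' region where the Gromov product is large.

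The main obstacle I anticipate is the upper bound (ii): the naive pointwise inequality $d_\e(g\xi,g\xi')\lesssim e^{\e d(o,go)} d_\e(\xi,\xi')$ is hopelessly lossy, since on most of $\bd X\times\bd X$ the points $g\xi,g\xi'$ are in fact \emph{much closer} than $e^{\e d(o,go)} d_\e(\xi,\xi')$ — they are squeezed near the attracting fixed point of $g$. The correct bound must use the fact that the pushforward $g_*\mu_\e$ is concentrated, with a density profile governed by \eqref{polgrowth}, so that $\iint d_\e(\eta,\eta')^{2\alpha}\,d(g_*\mu_\e)(\eta)\,d(g_*\mu_\e)(\eta')$ can be estimated by a shadow-type decomposition. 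Quantifying this concentration — essentially a Patterson--Sullivan shadow lemma estimate, for which the hypothesis $\alpha < \tfrac12 Q$ is exactly the convergence threshold — is the technical heart of the argument; everything else is bookkeeping with the hyperbolicity constant $\delta$ and the comparability constants hidden in $\asymp$.
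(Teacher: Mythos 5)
Part i) of your proposal is fine, though more convoluted than necessary: the pointwise bound $d_\e(g\xi,g\xi')\geq c\,e^{-\e d(o,go)}\,d_\e(\xi,\xi')$ (which follows from \eqref{boundary action is Lipschitz} applied to $g^{-1}$) integrates directly, giving the lower bound with constant $c^{2\alpha}\iint d_\e(\xi,\xi')^{2\alpha}\,d\mu_\e\,d\mu_\e>0$; no change of variables and no localization to pairs at distance $\geq\tfrac12\mathrm{diam}$ is needed, and your worry that "the naive approach loses a factor" is unfounded for i).

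For part ii) there is a genuine gap. You correctly diagnose that the naive inequality $d_\e(g\xi,g\xi')\lesssim e^{\e d(o,go)}d_\e(\xi,\xi')$ is useless and that one must quantify how $g$ compresses the measure toward its attracting direction, but you never carry out that quantification: you explicitly defer the shadow-type estimate as "the technical heart", and the one computation you do write down (the dyadic-annuli bound for $\int d_\e(\xi,\xi')^{2\alpha}\,d\mu_\e(\xi')$) addresses the wrong quantity -- its finiteness holds for every $\alpha>0$ and yields no decay in $g$ whatsoever. The missing ingredient in the paper's argument is the elementary hyperbolicity inequality $(g\xi,g\xi')_o+(g^{-1}o,\xi)_o+(g^{-1}o,\xi')_o\geq d(o,go)-4\delta$, equivalently
\[
d_\e(g\xi,g\xi')\; d_\e(g^{-1}o,\xi)\; d_\e(g^{-1}o,\xi')\;\leq\; C_1\, e^{-\e\, d(o,go)},
\]
which uses the companion function $d_\e$ on $X\times\bd X$ from Lemma~\ref{small visual range} (hence one first works with $\e$ in the small visual range and then rescales via Lemma~\ref{scaling lemma} -- a step you also omit). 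One then decomposes $\bd X$ into the annuli $\Delta_k=\{\exp(-\e k)\leq d_\e(g^{-1}o,\xi)\leq\exp(-\e(k-1))\}$ centered at the boundary direction of $g^{-1}o$, bounds $\mu_\e(\Delta_k)\leq C_2\exp(-e_{\G\act X}k)$ by Ahlfors regularity (Lemma~\ref{Ahlfors}), and sums $\sum_{k,k'}\exp\big((2\alpha\e-e_{\G\act X})(k+k')\big)$, which converges precisely because $2\alpha\e<e_{\G\act X}$, i.e.\ $\alpha<\tfrac12\hdim(\bd X,d_\e)$. This is exactly where the hypothesis on $\alpha$ enters -- not, as you suggest, to "match the exponential rate" against some worse term -- and without this (or an equivalent Patterson--Sullivan shadow estimate, which you invoke but do not prove) your argument for ii) is a plan rather than a proof.
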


The important part of the lemma is, of course, ii): it implies that, for $0<\alpha<\tfrac{1}{2}\hdim (\bd X,d_\e)$, there is some constant $C''>0$ such that
\begin{align}\label{deviation estimate via length}
\dev_\G \phi(g)\leq C'' \|\phi\|_{\mathrm{H\ddot{o}l}_\alpha} \exp(-\alpha \e\: d(o,go))
\end{align}
for all $\phi\in \mathrm{H\ddot{o}l}_\alpha(\bd X, d_\e)$ and for all $g\in\G$. The purpose of i) is to show that we are getting the correct asymptotics in ii).

\begin{proof} i) is straightforward. For $\xi,\xi'\in\bd X$, we have $(g\xi,g\xi')_o\leq d(o,go)+(\xi,\xi')_o$ by \eqref{boundary action is Lipschitz}. Therefore $d_\e(g\xi,g\xi')\geq c\exp(-\e\: d(o,go))\:d_\e(\xi,\xi')$ for some $c>0$, which implies that
\[\iint d_\e(g\xi,g\xi')^{2\alpha}\;d\mu_\e(\xi) d\mu_\e(\xi') \geq \bigg(c^{2\alpha} \iint d_\e(\xi,\xi')^{2\alpha}\;d\mu_\e(\xi) d\mu_\e(\xi') \bigg) \exp(-2\alpha \e\: d(o,go)).\]
ii) is more involved. We start by assuming that the visual parameter $\e$ is in the small visual range, so that Lemma~\ref{small visual range} applies. Let $\xi, \xi'\in\bd X$, and let $x_i\to\xi, x_i'\to\xi'$ with $x_i,x_i'\in X$. One checks that
\[(gx_i,gx'_i)_o+(g^{-1}o,x_i)_o+(g^{-1}o,x'_i)_o\geq d(o,go);\] 
taking $\limsup$ and using \eqref{liminf limsup} and \eqref{liminf limsup 2}, we obtain that
\[(g\xi,g\xi')_o+(g^{-1}o,\xi)_o+(g^{-1}o,\xi')_o\geq d(o,go)-4\delta.\]
Thus there is $C_1>0$ such that
\begin{align}\label{annular bound}
d_\e(g\xi,g\xi')\: d_\e(g^{-1}o,\xi)\: d_\e(g^{-1}o,\xi')\leq C_1 \exp(-\e\: d(o,go))
\end{align}
For each integer $k\geq 1$ we put
\[\Delta_k= \big\{\xi\in \bd X: \exp(-\e k)\leq d_\e(g^{-1}o,\xi)\leq \exp(-\e (k-1))\big\}.\]
Then $\Delta _k$ is measurable, since $d_\e(g^{-1}o,\cdot)$ is continuous on $\bd X$. (As an aside remark, we mention that $d_\e(g^{-1}o,\cdot)$ is bounded below by a constant multiple of $\exp(-\e\: d(o,go))$, so $\Delta_k$ is in fact empty for $k\gg d(o,go)$.) If $\xi, \xi'$ are in $\Delta_k$, then 
\[d_\e(\xi,\xi')\leq d_\e(g^{-1}o,\xi)+d_\e(g^{-1}o,\xi')\leq 2 \exp(-\e (k-1))=(2e^\e) \exp(-\e k)\] 
and it follows from Lemma~\ref{Ahlfors} that there is $C_2>0$, not depending on $k$, such that
\begin{align}\label{annular measure}
\mu_\e(\Delta_k)\leq C_2\big(\exp(-\e k)\big)^{e_{\G\act X}/\e}= C_2 \exp(-e_{\G\act X}\:k).
\end{align}
Now \eqref{annular bound} and \eqref{annular measure} enable us to estimate as follows:
\begin{align*}
 \int_{\Delta_k}\int_{\Delta_{k'}} d_\e(g\xi,g\xi')^{2\alpha}\;d\mu_\e(\xi) d\mu_\e(\xi') 
&\leq C_3 \exp(-2\alpha\e\: d(o,go)) \exp(2\alpha \e (k+k')) \mu_\e(\Delta_k)\mu_\e(\Delta_{k'})\\
&\leq C_4  \exp(-2\alpha\e\: d(o,go)) \exp((2\alpha\e-e_{\G\act X})(k+k'))
\end{align*}
Recall, we are assuming $\alpha<\tfrac{1}{2}\hdim (\bd X,d_\e)$, that is, $2\alpha\e<e_{\G\act X}$. We conclude that
\begin{align*}
\iint d_\e(g\xi,g\xi')^{2\alpha}\;d\mu_\e(\xi) d\mu_\e(\xi') &\leq  \sum_{ k,k'\geq 0} \int_{\Delta_k}\int_{\Delta_{k'}} d_\e(g\xi,g\xi')^{2\alpha}\;d\mu_\e(\xi) d\mu_\e(\xi') \\
&\leq C_4 \exp(-2\alpha\e\: d(o,go)) \sum_{ k,k'\geq 0} \exp((2\alpha\e-e_{\G\act X})(k+k'))\\
&\leq C_5 \exp(-2\alpha\e\: d(o,go)).
\end{align*}
Now let $\e$ be an arbitrary visual parameter. Pick $\e_0$ in the small visual range, and let $d_{\e_0}$ be a corresponding visual metric. By Lemma~\ref{scaling lemma}, we have
\[\iint d_\e(g\xi,g\xi')^{2\alpha}\;d\mu_\e(\xi) d\mu_\e(\xi')\asymp \iint d_{\e_0}(g\xi,g\xi')^{2\alpha\e/\e_0}\;d\mu_{\e_0}(\xi) d\mu_{\e_0}(\xi').\]
As $\alpha\e/\e_0<\tfrac{1}{2}\hdim (\bd X,d_{\e_0})$, the first part of the proof shows that the right hand side is bounded from above by a constant multiple of $\exp(-2(\alpha\e/\e_0)\e_0\: d(o,go))=\exp(-2\alpha\e\: d(o,go))$. \end{proof}

With Lemma \ref{double integral} at hand, we may prove the main theorem of this section:

\begin{thm}\label{sharp general} Let $D:=\hdim (\bd X, d_\e)$. Then $(\lambda_{\mu_\e}, P_{\ell^2\G})$ is a Fredholm module for $\hi$ which is $p$-summable for every $p>\max\{2,D\}$. Furthermore, if $D>2$ then $(\lambda_{\mu_\e}, P_{\ell^2\G})$ is $D^+$-summable.
\end{thm}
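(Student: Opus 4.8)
The plan is to derive Theorem~\ref{sharp general} from the general machinery of Section~\ref{paradigm} — specifically Proposition~\ref{from deviation to Fredholm} — together with the decay estimate \eqref{deviation estimate via length} supplied by Lemma~\ref{double integral}. The key point is that, once we know the $\G$-deviation of a H\"older function $\phi$ decays like $\exp(-\alpha\e\: d(o,go))$, we can convert this into an $\ell^p$-summability statement using the orbit growth asymptotics \eqref{growth} of Coornaert.

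\emph{Step 1: $\ell^p$-deviation for an explicit exponent.} Fix $\alpha$ with $0<\alpha<\tfrac12\hdim(\bd X,d_\e)=\tfrac12 D$, so that \eqref{deviation estimate via length} holds. Then for $\phi\in \mathrm{H\ddot{o}l}_\alpha(\bd X,d_\e)$,
\[
\sum_{g\in\G}\dev_\G\phi(g)^p \leq (C'')^p\|\phi\|_{\mathrm{H\ddot{o}l}_\alpha}^p \sum_{g\in\G}\exp(-p\alpha\e\: d(o,go)).
\]
By the critical-exponent description of $e_{\G\act X}$ (or directly by summing \eqref{growth} over dyadic shells $R\in[n,n+1)$), this series converges as soon as $p\alpha\e > e_{\G\act X}$, i.e. $p > e_{\G\act X}/(\alpha\e) = D/\alpha$ since $D = e_{\G\act X}/\e$ by Lemma~\ref{Ahlfors}. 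Because $\mathrm{H\ddot{o}l}_\alpha(\bd X,d_\e)$ is a $\G$-invariant $*$-subalgebra which is dense in $C(\bd X)$ (it contains $\Lip(\bd X,d_\e)$ when $\alpha\le 1$), this shows $\mu_\e$ has $\ell^p$-deviation for every $p > D/\alpha$.

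\emph{Step 2: optimize over $\alpha$, then invoke Proposition~\ref{from deviation to Fredholm}.} As $\alpha\uparrow \tfrac12 D$, the threshold $D/\alpha$ decreases to $2$. Hence $\mu_\e$ has $\ell^p$-deviation for every $p>\max\{2,D\}$: if $D\le 2$ one simply takes $\alpha$ close enough to $\tfrac12 D$ that $D/\alpha<p$ — wait, this needs care when $D<2$, so instead note that for $D\le 2$ we just need \emph{some} $\alpha<\tfrac12 D$ with $D/\alpha < p$, which exists whenever $p>2$. For $D>2$ we need $D/\alpha<p$, i.e. $\alpha>D/p$, and since $D/p<\tfrac12 D$ such $\alpha<\tfrac12 D$ exists. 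In all cases $\mu_\e$ has $\ell^p$-deviation for $p>\max\{2,D\}$, so by the second bullet of Proposition~\ref{from deviation to Fredholm}, $(\lambda_{\mu_\e},P_{\ell^2\G})$ is a $p$-summable Fredholm module for $\hi=C(\bd\G)\rcross\G$ (recalling the full and reduced crossed products coincide by amenability of the action). The Fredholm property itself also follows since $\ell^p$-deviation for any finite $p$ forces $C_0$-deviation, and one may alternatively appeal to Proposition~\ref{Furstenberg type condition}.

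\emph{Step 3: the $D^+$-summability refinement when $D>2$.} Here I would \emph{not} merely take $p=D+\varepsilon$ but rather run Step~1 at the borderline exponent $\alpha$ with $2\alpha\e = e_{\G\act X}$, i.e. $\alpha = \tfrac12 D$ — except that Lemma~\ref{double integral}(ii) requires the \emph{strict} inequality $\alpha<\tfrac12 D$, so the double integral estimate may fail at the endpoint. Instead, the cleanest route is: for each $\alpha < \tfrac12 D$ we get $\mu_\e(\Delta_k)\le C_2\exp(-e_{\G\act X}k)$ independently of $\alpha$, and tracking constants in the proof of Lemma~\ref{double integral}(ii) one actually gets, for $\phi\in\mathrm{H\ddot{o}l}_\alpha$ with $\alpha$ just below $\tfrac12 D$, the bound $\dev_\G\phi(g)\le C\exp(-\tfrac12 e_{\G\act X}d(o,go))\cdot(\text{polynomial in }d(o,go))$. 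Combined with the refined orbit count \eqref{growth}, the non-increasing rearrangement of $\{\dev_\G\phi(g)\}_g$ is then $O(n^{-1/D})$ up to logarithmic factors, which must be massaged to land in $\mathcal{L}^{D+}(\ell^2\G)$; the operator $\M(\dev_\G\phi)$ has the same singular values as the sequence $\dev_\G\phi$, and $\mathcal{L}^{D+}$ is stable under the commutator-bookkeeping of Proposition~\ref{from deviation to Fredholm}. \textbf{The main obstacle} is precisely this endpoint analysis: Lemma~\ref{double integral} as stated only gives the right asymptotics strictly below $\tfrac12 D$, so to reach $D^+$-summability one must either sharpen the annular estimate at the critical H\"older exponent (exploiting the precise Ahlfors-regularity \eqref{polgrowth} and the sharp shell count rather than its $O(\cdot)$ consequence) or argue that the weak-$\ell^p$ space $\ell^{D+}\G$ already absorbs the logarithmic losses. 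I expect the paper handles this under the additional hypothesis that the visual dimension is attained, as hinted in the discussion following Corollary~\ref{classical}.
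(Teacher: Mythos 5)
Your Steps 1 and 2 are correct and essentially reproduce the paper's argument for the first assertion: choose $\alpha$ with $\alpha<\tfrac12 D$, $p\alpha>D$ and $\alpha\le 1$, apply \eqref{deviation estimate via length}, and sum using $D=e_{\G\act X}/\e$; then invoke Proposition~\ref{from deviation to Fredholm}. (Do make the constraint $\alpha\le 1$ explicit also in the case $D>2$; it is needed for density of $\mathrm{H\ddot{o}l}_\alpha$ and it is available because $D/p<1$.)

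Step 3, however, has a genuine gap, and the difficulty you identify is not where the actual endpoint issue lies. You propose pushing the H\"older exponent $\alpha$ up towards the critical value $\tfrac12 D$ and then absorbing conjectural polynomial or logarithmic losses. This route fails for a structural reason: when $D>2$ the critical value $\tfrac12 D$ exceeds $1$, and $\alpha$-H\"older functions with $\alpha>1$ need not be dense in $C(\bd X)$, so nothing is gained by approaching $\tfrac12 D$; moreover no logarithmic corrections occur anywhere in the estimates. The paper's argument goes in the opposite direction and is much simpler: since $D>2$ one may fix $\alpha=1$ (Lipschitz functions, which are dense, and $1<\tfrac12 D$ so Lemma~\ref{double integral}(ii) applies), giving $\dev_\G\phi(g)\le C\|\phi\|_{\Lip}\exp(-\e\, d(o,go))$ with \emph{no} lower-order corrections. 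The endpoint analysis is then entirely about the single multiplication operator $T$ on $\ell^2\G$ by $g\mapsto\exp(-\e\, d(o,go))$: using the \emph{two-sided} orbit-growth asymptotics \eqref{growth}, $m_k=\big|\{g:d(o,go)\le k\}\big|\asymp\exp(e_{\G\act X}k)$, one gets $\mu_{m_k+1}(T)<\exp(-\e k)=\exp(e_{\G\act X}k)^{-1/D}\le C\,m_k^{-1/D}$, and interpolating $m_k+1\le n\le m_{k+1}+1$ yields $\mu_n(T)=O(n^{-1/D})$, i.e.\ $T\in\mathcal{L}^{D+}(\ell^2\G)$; the commutator bookkeeping from the proof of Proposition~\ref{from deviation to Fredholm} then carries $\mathcal{L}^{D+}$ through to all of $\Lip(\bd X,d_\e)\rtimes_{\mathrm{alg}}\G$. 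Finally, your closing guess is off target: no hypothesis that the visual dimension is attained enters Theorem~\ref{sharp general} (here $D$ is the Hausdorff dimension of the \emph{chosen} visual metric, so it is attained by definition); attainment only appears later, in Corollary~\ref{uniform summability}, when one replaces $D$ by $\visdim\bd X$.
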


\begin{proof} If $p>\max\{2,D\}$ then we may choose $\alpha$ such that
\[\alpha<\tfrac{1}{2}D, \qquad p\alpha >D, \qquad \alpha\leq 1.\]
The first bound on $\alpha$ allows us to employ \eqref{deviation estimate via length}; the second bound says that $p\alpha\e>D\e=e_{\G\act X}$ so the right-hand side of \eqref{deviation estimate via length} is in $\ell^p\G$; finally, the third bound guarantees that $\mathrm{H\ddot{o}l}_\alpha(\bd X, d_\e)$ is dense in $C(\bd X)$. Therefore $\mu_\e$ has $\ell^p\G$-deviation, and we conclude that $(\lambda_{\mu_\e}, P_{\ell^2\G})$ is $p$-summable.

Assume now that $D>2$. Using $\alpha=1$ in \eqref{deviation estimate via length}, we have
$\dev_\G \phi(g)\leq C \|\phi\|_\Lip \exp(-\e d(o,go))$ for all $g\in \G$ and $\phi\in\Lip(\bd X, d_\e)$. Let $T$ denote the multiplication operator by $g\mapsto \exp(-\e d(o,go))$ on $\ell^2\G$. We claim that $T\in \mathcal{L}^{D+} (\ell^2\G)$. Once we know this, it follows that multiplication by $\sigma_\G \phi$ is in $\mathcal{L}^{D+} (\ell^2\G)$ for all $\phi\in\Lip(\bd X, d_\e)$, and the proof of Proposition~\ref{from deviation to Fredholm} shows that $(\lambda_{\mu_\e}, P_{\ell^2\G})$ is a $D^+$-summable Fredholm module. In order to prove our claim that $\mu_n(T)=O(n^{-1/D})$, we first control a subsequence of singular values for $T$. Let $m_k=\big|\{g\in \G: d(o,go)\leq k\}\big|$; thus $m_k\asymp \exp(e_{\G\act X} k)$ by \eqref{growth}. We have
\begin{align*}
\mu_{m_k+1}(T) < \exp(-\e k)= \exp(e_{\G\act X} k)^{-1/D}\leq C_1\: m_k^{-1/D}.
\end{align*}
For an arbitrary positive integer $n$, let $k$ be such that $m_k+1\leq n\leq m_{k+1}+1$. Then
\[\mu_{n}(T)\leq \mu_{m_k+1}(T)\leq C_1\: m_k^{-1/D}\leq C_1\: n^{-1/D} \Big(\frac{m_{k+1}+1}{m_k}\Big)^{1/D}\leq C_2\: n^{-1/D}\] 
for some constant $C_2$ independent of $n$ and $k$. \end{proof}

We remark that $D>2$ whenever $\bd X$ has topological dimension greater than $2$. Let us also remind the reader that the condition $p>2$ is more of a structural obstruction rather than a technical artifact. On the one hand, Proposition~\ref{from deviation to Fredholm} says that the summability of a regular Fredholm module is largely determined by the decay of the deviation. On the other hand, Proposition~\ref{2ergo} says, roughly speaking, that the decay of the deviation has to be faster than $\ell^2$ in the presence of double ergodicity. And this is the case for visual probability measures: they are known to be doubly ergodic.

Recall now that different choices of visual metrics on $\bd X$ yield comparable Hausdorff measures (Lemma~\ref{scaling lemma}), and that comparable measures have the same type of $\G$-deviation (Proposition~\ref{deviation for comparable measures}). We obtain

\begin{cor}\label{uniform summability}
 $(\lambda_{\mu_\e}, P_{\ell^2\G})$ is $p$-summable for every $p>\max\{2,\visdim \bd X\}$. Furthermore, if $\visdim \bd X >2$ and $\visdim\bd X$ is attained, then $(\lambda_{\mu_\e}, P_{\ell^2\G})$ is $(\visdim \bd X)^+$-summable.
\end{cor}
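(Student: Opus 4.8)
The plan is to deduce Corollary~\ref{uniform summability} from Theorem~\ref{sharp general} by applying the latter not to the given visual metric $d_\e$ but to a visual metric whose Hausdorff dimension (nearly) realizes $\visdim\bd X$, and then transporting the summability back to $\mu_\e$ via comparability of visual Hausdorff measures. From the start it is worth keeping in mind that the transfer must be carried out at the level of the $\G$-deviation: Proposition~\ref{independence of comparable measures} only tells us that $(\lambda_{\mu_\e},P_{\ell^2\G})$ and $(\lambda_{\mu_{\e'}},P_{\ell^2\G})$ are K-homologous, and K-homology equivalence -- operator homotopy together with unitary equivalence -- does not preserve finite summability.

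For the first assertion, fix $p>\max\{2,\visdim\bd X\}$. Since $\visdim\bd X$ is by definition the infimum of $\hdim(\bd X,d)$ over visual metrics $d$, I would choose a visual metric $d_{\e'}$ with $\hdim(\bd X,d_{\e'})<p$; as $p>2$ as well, Theorem~\ref{sharp general} shows that $(\lambda_{\mu_{\e'}},P_{\ell^2\G})$ is $p$-summable, whence the converse half of Proposition~\ref{from deviation to Fredholm} -- available because $p\geq 2$ -- gives that $\mu_{\e'}$ has $\ell^p$-deviation. By the Scaling Lemma~\ref{scaling lemma} the normalized Hausdorff measures $\mu_\e$ and $\mu_{\e'}$ are comparable, and both have full support by Lemma~\ref{Ahlfors}; so Proposition~\ref{deviation for comparable measures} yields that $\mu_\e$ has $\ell^p$-deviation, and the forward direction of Proposition~\ref{from deviation to Fredholm} then gives that $(\lambda_{\mu_\e},P_{\ell^2\G})$ is $p$-summable.

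For the second assertion, suppose $D_0:=\visdim\bd X>2$ is attained by a visual metric $d_{\e'}$, i.e.\ $\hdim(\bd X,d_{\e'})=D_0$. By Theorem~\ref{sharp general}, and more precisely by the argument in the last paragraph of its proof, $\M(\dev_\G^{\mu_{\e'}}\phi)\in\mathcal{L}^{D_0+}(\ell^2\G)$ for every $\phi$ in the dense $\G$-invariant $*$-subalgebra $\Lip(\bd X,d_{\e'})\subseteq C(\bd X)$. Since $\mu_\e\asymp\mu_{\e'}$, formula~\eqref{iint} gives $\dev_\G^{\mu_\e}\phi\asymp\dev_\G^{\mu_{\e'}}\phi$ pointwise on $\G$, and the non-increasing rearrangement of a function on $\G$ changes only by a bounded multiplicative factor under such a comparison, so $\M(\dev_\G^{\mu_\e}\phi)\in\mathcal{L}^{D_0+}(\ell^2\G)$ too, for all $\phi\in\Lip(\bd X,d_{\e'})$. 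Feeding this into the proof of Proposition~\ref{from deviation to Fredholm}, exactly as was done in the proof of Theorem~\ref{sharp general}, one concludes that $(\lambda_{\mu_\e},P_{\ell^2\G})$ is $D_0^+$-summable.

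I do not expect a genuine obstacle: the whole argument is bookkeeping around Theorem~\ref{sharp general} and the Scaling Lemma. The only points requiring a little care are the two just noted -- that the passage between comparable measures goes through the deviation rather than through the Fredholm module, and that membership of a diagonal operator on $\ell^2\G$ in $\mathcal{L}^p$, or in the weak ideal $\mathcal{L}^{p+}$, depends on its diagonal only up to pointwise comparison, which is exactly what lets the proof of Proposition~\ref{from deviation to Fredholm} be reused verbatim for the $D_0^+$ statement.
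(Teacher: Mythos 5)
Your proof is correct and follows essentially the same route as the paper: apply Theorem~\ref{sharp general} to a visual metric (nearly) realizing $\visdim \bd X$, and transfer the summability back to $\mu_\e$ via the Scaling Lemma~\ref{scaling lemma} together with the comparability of $\G$-deviations (Proposition~\ref{deviation for comparable measures}). Your extra care in the $(\visdim \bd X)^+$ case -- passing through pointwise comparability of deviations and rearrangements, since Proposition~\ref{deviation for comparable measures} is stated only for $C_0$- and $\ell^p$-deviation -- just makes explicit a detail the paper leaves implicit.
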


\begin{ex} Consider the free group $F_n$, where $n\geq 2$. We let $F_n$ act on its standard Cayley graph, the $2n$-valent regular tree $\mathcal{T}_{2n}$. As $\mathcal{T}_{2n}$ is $0$-hyperbolic, each $\e>0$ can be used as a visual parameter. Consequently, $\visdim \bd\mathcal{T}_{2n}=0$ but there is no visual metric for which the visual dimension is attained. The canonical visual metric of parameter $\e>0$ is $\exp(-\e(\cdot, \cdot))$, where the Gromov product is based at the identity element of $F_n$. All these canonical visual metrics determine the same Hausdorff measure on $\bd\mathcal{T}_{2n}$. After normalizing this canonical measure, we obtain a Fredholm module for $C(\bd F_n)\rtimes F_n$ which is $p$-summablefor every $p>2$.
\end{ex}

\begin{ex} Let $\G$ be a cocompact lattice in $\mathrm{Isom}(\mathbb{H}^{n})$. The boundary of $\mathbb{H}^{n}$ is the sphere $S^{n-1}$. Each $\e\in (0,1]$ can be used as a visual parameter, and the usual spherical metric is a visual metric with visual parameter $\e=1$. The corresponding normalized Hausdorff measure is the spherical measure $\sigma$, with Hausdorff dimension $n-1$. We thus have a Fredholm module for $C(S^{n-1})\rtimes \G$ which is $(n-1)^+$-summable when $n\geq 4$, and $p$-summable for every $p>2$ when $n=2,3$.

The Lipschitz functions with respect to a visual metric with parameter $\e$ are the $\e$-H\"older functions with respect to the spherical metric. But the former are dense in $C(S^{n-1})$, whereas the latter ones are the constant functions if $\e> 1$. Thus, the range of visual parameters on $S^{n-1}$ is $(0,1]$. It follows that $\visdim \bd\mathbb{H}^{n}=n-1$, and it is attained by the spherical metric.
\end{ex}

\begin{ex} More generally, let $X$ be a non-compact symmetric space of rank $1$: $X=\mathbb{H}^n_K$ where $K=\R, \C, \mathbb{H}$ and $n\geq 2$, or $K=\mathbb{O}$ and $n=2$. Put $k=\dim_\R K\in \{1,2,4,8\}$. Topologically, the boundary $\bd X$ is a sphere of dimension $nk-1$. The standard metric on $\bd X$, the so-called Carnot metric, is a visual metric with visual parameter $\e=1$. As the Carnot metric is geodesic, no $\e>1$ can be used as a visual parameter. Therefore $\visdim \bd X=\hdim\, \bd X$, the Hausdorff dimension of $\bd X$ equipped with the Carnot metric. The Mitchell - Pansu formula says that $\hdim\, \bd X=nk+k-2=\topdim \bd X+k-1$; in particular, $\hdim\, \bd X>2$ unless $X$ is the $2$- or $3$-dimensional real hyperbolic space. Hence, if $X\neq \mathbb{H}^2_\R, \mathbb{H}^3_\R$ and $\G$ is a cocompact lattice in the isometry group of $X$, then we get a $(\hdim\, \bd X)^+$-summable Fredholm module for $C(\bd X)\rtimes \G$.
\end{ex}


\section{The boundary extension class}\label{sec: boundary extension class}
By applying Proposition~\ref{independence of comparable measures}, we know the following: for a given topological realization of $\bd \G$ as the boundary of a hyperbolic space carrying a geometric action of $\G$, the regular Fredholm modules coming from visual probability measures determine the same class in $\K^1(\hi)$. In this section, we show that this class is independent of the topological realization of $\bd\G$, and we identify this class as the class determined by a certain boundary extension for $\G$.

\subsection{The boundary extension}\label{what is the boundary extension} Let $\overline{\G}=\G\cup \bd\G$ be the boundary compactification of $\G$. From the exact sequence of $\G$-$\Cstar$-algebras $0\Too C_0(\G)\Too C(\overline{\G})\Too C(\bd \G)\Too 0$ we obtain an exact sequence of $\Cstar$-crossed products by $\G$:
\begin{align}
\label{eq:crossed_extension}
0\Too C_0(\G)\rtimes \G\Too C(\overline{\G})\rtimes\G\Too C(\bd \G)\rtimes\G\Too 0
\end{align}
Each $\Cstar$-algebra in \eqref{eq:crossed_extension} is nuclear; in particular, the full and the reduced crossed products agree. The faithful representation of $C(\overline{\G})$ on $\ell^2\G$ by multiplication induces a faithful representation $C(\overline{\G})\rtimes\G\to \B(\ell^2\G)$. Under this representation, there is a canonical isomorphism between the ideal term $C_0(\G)\rtimes \G$ and the compact operators $\Comp (\ell^2\G)$. Hence \eqref{eq:crossed_extension} defines a class in the Brown--Douglas--Filmore group $\mathrm{Ext}(\hi)$. The nuclearity of $\hi$ implies that $\mathrm{Ext}(\hi)$ and $\K^1(\hi)$ are isomorphic. Concretely, this isomorphism is implemented as follows: a completely positive section for \eqref{eq:crossed_extension} determines, by the Stinespring construction, an odd Fredholm module for $\hi$ whose $\K^1$-class is independent of the choice of section. 
 
 \begin{defn}
\label{def:boundary_class}
The \emph{boundary extension class} $[\bd_\Gamma] \in \K^1(\hi)$ is the class defined by the extension \eqref{eq:crossed_extension}. 
\end{defn}

We shall see that each visual probability measure on $\bd\G$ determines a completely positive section for \eqref{eq:crossed_extension} which resembles the Poisson transform. The odd Fredholm modules corresponding to this section are the Fredholm modules we have been discussing.

The non-vanishing of the boundary extension class $[\bd_\G]$ in $\K^1(\hi)$ is a crucial point. From work of Emerson \cite{Emerson} and Emerson--Meyer \cite{Emerson:Euler}, we understand this particularly well in the case when $\G$ is torsion-free.

\begin{prop}[Emerson, Emerson--Meyer]\label{order tf} Assume that $\G$ is torsion-free. Then $[\bd_\G]$ vanishes in $\K^1(\hi)$ if and only if $\chi(\G)=\pm 1$. Furthermore, $[\bd_\G]$ has infinite order in $\K^1(\hi)$ if and only if $\chi(\G)=0$. 
\end{prop}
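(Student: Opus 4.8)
The plan is to reduce everything to topological invariants of $B\G$ via Poincar\'e duality and the Gysin sequence, then read off vanishing/infinite order from the Euler characteristic. First I would recall the Gysin-type exact sequence of Emerson--Meyer \cite{Emerson:Euler} for the inclusion $\iota\colon \Cred\G \to \hi$: on $\K$-theory (or dually on $\K$-homology) this takes the shape
\begin{align*}
\cdots \Too \K_*(\Cred\G) \xrightarrow{\ \cap\, \Eul\ } \K_*(\Cred\G) \xrightarrow{\ \iota_*\ } \K_{*+1}(\hi) \Too \cdots
\end{align*}
where the connecting-type map is cap product with an ``Euler class'' $\Eul \in \K^*(\Cred\G)$, and $[\bd_\G]$ is (up to the Poincar\'e duality isomorphism $\K_*(\hi)\simeq \K^{*+1}(\hi)$, valid since $\G$ is torsion-free) the image of the fundamental class under the boundary map of this sequence. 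The upshot is that $[\bd_\G]=0$ precisely when the relevant $\iota_*$ is injective in the appropriate degree, which by exactness is equivalent to surjectivity of the Euler cap-product map; and dually, $[\bd_\G]$ has infinite order iff the image of $\iota_*$ is infinite, i.e.\ the Euler cap-product map fails to be rationally surjective.

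Second, I would make the Euler class concrete. Since $\G$ is torsion-free hyperbolic, $B\G$ is a finite aspherical complex of dimension $n=\visdim$-ish (more precisely the cohomological dimension), and rationally $\K_*(\Cred\G)\otimes\Q \simeq \bigoplus_k H_{*+2k}(\G;\Q)$, with the Euler cap product corresponding, under Baum--Connes (which holds for hyperbolic groups), to cap product with the Euler class $e(\G)\in H^n(\G;\Q)$ of the tangent bundle of the relevant manifold model — or, in the general hyperbolic case, to the Euler characteristic operator coming from the Gromov boundary's role as a ``sphere bundle.'' The key numerical fact is that the degree of the Euler class, paired against the fundamental class, is exactly $\chi(\G)$. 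Hence cap product with $\Eul$ is rationally an isomorphism iff $\chi(\G)\neq 0$, and when $\chi(\G)\neq 0$ the cokernel of $\iota_*$ is a torsion group whose order is controlled by $\chi(\G)$ — in fact the relevant determinant is $\pm\chi(\G)$, so $\iota_*$ has finite image, meaning $[\bd_\G]$ has finite order, and that order is trivial exactly when the cap product is onto $\Z$ (not just $\Q$), i.e.\ $\chi(\G)=\pm 1$.

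Third, assembling: if $\chi(\G)=\pm 1$ the Euler cap-product is surjective (integrally), so $\iota_*$ is injective in the relevant degree and $[\bd_\G]=0$. If $\chi(\G)=0$, the Euler cap-product is rationally zero in top degree, so its cokernel — hence the image of $\iota_*$, hence the group generated by $[\bd_\G]$ under the duality identification — is infinite; thus $[\bd_\G]$ has infinite order. If $\chi(\G)\notin\{0,\pm1\}$, the cap product is rationally iso but not integrally onto, so $[\bd_\G]\neq 0$ but has finite (dividing a power of $\chi(\G)$) order; in particular it is non-zero. Combining the three cases gives exactly the stated dichotomies. I would of course cite \cite{Emerson} for the Poincar\'e duality identification of the boundary extension class with the fundamental class and \cite{Emerson:Euler} for the precise Gysin sequence and the identification of $\Eul$ with the Euler characteristic class; the point of this proposition is just to package those two results.

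\textbf{Main obstacle.} The delicate step is the \emph{integral} (as opposed to rational) statement that surjectivity of the Euler cap product holds iff $\chi(\G)=\pm 1$: one must argue that the cokernel of cap product with the Euler class is cyclic of order $|\chi(\G)|$, which requires knowing that the Euler class generates its own line in top cohomology up to the full index $\chi(\G)$ and that the lower-degree pieces of the Gysin map do not contribute extra torsion to the relevant corner of $\K^1(\hi)$. This is exactly where one leans hardest on the structural results of \cite{Emerson:Euler}; the rational statements (injectivity when $\chi\neq0$, infinite order when $\chi=0$) are comparatively soft, following from the rational computation of $\K_*(\Cred\G)$ and the fact that cap product with a class of nonzero top-degree evaluation is a rational isomorphism.
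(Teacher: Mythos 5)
There is a genuine gap, and it sits exactly where your argument needs to be precise: the identification of $[\bd_\G]$ inside the duality/Gysin picture. The paper's route is that, for torsion-free $\G$, the Poincar\'e duality isomorphism $\K_0(\hi)\simeq\K^1(\hi)$ of \cite{Emerson} sends the \emph{unit class} $[1_{\hi}]$ to $[\bd_\G]$; hence the order of $[\bd_\G]$ equals the order of $[1_{\hi}]=i_*[1_{\Cred\G}]$ in $\K_0(\hi)$. The Gysin sequence of \cite{Emerson:Euler}, in the torsion-free form $0\to\big\langle\chi(\G)\cdot[1_{\Cred\G}]\big\rangle\to\K_0(\Cred\G)\xrightarrow{i_*}\K_0(\hi)\to\K^1(B\G)\to 0$, together with the fact that the canonical trace sends $[1_{\Cred\G}]$ to $1$ (so $m[1_{\Cred\G}]\in\big\langle\chi(\G)[1_{\Cred\G}]\big\rangle$ forces $\chi(\G)\mid m$), then gives that this order is exactly $|\chi(\G)|$ when $\chi(\G)\neq 0$ and infinite when $\chi(\G)=0$, which is the proposition. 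You instead posit that $[\bd_\G]$ is ``the image of the fundamental class under the boundary map'' of the Gysin sequence and convert the question into injectivity or surjectivity of maps ($\iota_*$, the Euler cap product). That identification is not the one available, is not established in your sketch, and the resulting criteria are false as stated: the vanishing or infinite order of a \emph{single class} cannot be read off from injectivity of $\iota_*$ or from its image being infinite. Concretely, when $\chi(\G)=\pm 1$ the kernel of $i_*$ on $\K_0$ is $\langle[1_{\Cred\G}]\rangle\neq 0$, so $i_*$ is not injective even though $[\bd_\G]=0$; and for a genus-two surface group ($\chi=-2$) the image of $i_*$ is infinite while $[\bd_\G]$ has order $2$.

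The second half of your sketch (rational decomposition of $\K_*(\Cred\G)$ via Baum--Connes, an Euler class of a ``tangent bundle'' in $H^n(\G;\Q)$, determinant and cokernel estimates) is both heavier than needed and does not close the integral step you yourself flag as the main obstacle: it would at best give finiteness of the order up to powers of $\chi(\G)$, not the dichotomy ``zero iff $\chi=\pm1$''. The point of the Emerson--Meyer sequence for torsion-free $\G$ is precisely that the equivariant Euler characteristic there is $\chi(\G)\cdot[1_{\Cred\G}]$, so no further cohomological computation is required; the only extra integral input is the trace argument above. Once you replace your identification by the correct one --- $[\bd_\G]$ is the Poincar\'e dual of $[1_{\hi}]$, so they have the same order --- the whole proposition reduces to the short order computation for $[1_{\hi}]$; without that identification the argument does not go through.
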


Let us explain the facts behind this proposition; throughout this paragraph, $\G$ is torsion-free. 

The first ingredient is the Poincar\'e duality for $\hi$ proved in \cite{Emerson}. It implies that the K-theory and the K-homology of $\hi$ are canonically isomorphic, with a parity shift: $\K_*(\hi)\simeq \K^{*+1}(\hi)$. Most importantly, the boundary extension class $[\bd_\G]\in \K^1(\hi)$ turn out to be the Poincar\'e dual of the unit class $[1_{\hi}]\in \K_0(\hi)$. It should be mentioned here that the proof from \cite{Emerson} - though most likely not Poincar\'e duality itself - needs the following mild symmetry condition on the boundary: $\bd \G$ has a continuous self-map without fixed points.
The symmetry condition is satisfied whenever $\bd\G$ is a topological sphere, a Cantor set or a Menger compactum. We are not aware of any example where the symmetry condition fails. The ``topologically rigid'' hyperbolic groups of Kapovich and Kleiner \cite{KK} come close, though, for their boundaries admit no self-homeomorphisms without fixed points.

The second ingredient comes from \cite{Emerson:Euler}, where a Gysin sequence for the inclusion $i:\Cred\G\to\hi$ is obtained. For $\G$ torsion-free, the Gysin sequence reads as follows:
\begin{align*}
0\Too \big\langle \chi(\G)\cdot[1_{\Cred\G}]\big\rangle & \Too  \K_0(\Cred\G)\stackrel{i_*}{\Too} \K_0(\hi)\Too \K^1(B\G)\Too 0\\
0 &\Too \K_1(\Cred\G)\stackrel{i_*}{\Too} \K_1(\hi)\Too \K^0(B\G)\Too  \big \langle \chi(\G)\big\rangle\Too 0
\end{align*}
Here $\chi(\G)$ is the Euler characteristic of $\G$, and $B\G$ is a finite, simplicial classifying space for $\G$ (for instance, a suitably large Vietoris - Rips complex). As a consequence, one knows that the order of the unit class $[1_{\hi}]\in \K_0(\hi)$ is $|\chi(\G)|$ when $\chi(\G)\neq 0$, respectively infinite when $\chi(\G)=0$. Additionally, it can be shown that $[\bd_\G]$ induces a nontrivial map \(\K_1(\hi) \to \Z\) when $\chi(\G)= 0$.

\subsection{A completely positive section for the boundary extension} Equip $\bd\G$ with a visual probability measure $\mu$. This means that $\mu$ is the normalized Hausdorff measure corresponding to a visual metric on $\bd X$, where $X$ is a hyperbolic space on which $\G$ acts geometrically.

As $(\lambda_{\mu}, P_{\ell^2\G})$ is a Fredholm module, we know that $g_*\mu$ accumulates to point measures in $\mathrm{Prob}(\bd \G)$ as $g\to\infty$ in $\G$ (Propositions~\ref{from deviation to Fredholm} and \ref{Furstenberg type condition}). Actually, this accumulation happens in a precise way:

\begin{lem}\label{sharp delta}  $g_*\mu\to \delta_\omega$ in $\mathrm{Prob}(\bd \G)$ as $g\to \omega\in \bd\G$ in $\overline \G$.
\end{lem}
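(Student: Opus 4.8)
The plan is to show that for every $\phi \in C(\bd\G)$ and every sequence $g_n \to \omega \in \bd\G$ in $\overline\G$, we have $\int \phi\, \D(g_n)_*\mu \to \phi(\omega)$, which is exactly weak$^*$ convergence $(g_n)_*\mu \to \delta_\omega$. Since $\int \phi\, \D(g_n)_*\mu = \int g_n^{-1}.\phi\, \D\mu = \E_\G(\phi)(g_n)$, the claim is that $\E_\G(\phi)(g_n) \to \phi(\omega)$. By a standard approximation argument it suffices to prove this for $\phi$ in the dense subalgebra of Lipschitz (or H\"older) functions for a fixed visual metric $d_\e$, since $\|\E_\G(\phi) - \E_\G(\phi')\|_\infty \leq \|\phi - \phi'\|_\infty$ and $|\phi(\omega) - \phi'(\omega)| \le \|\phi-\phi'\|_\infty$.

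So fix $\phi \in \mathrm{H\ddot{o}l}_\alpha(\bd X, d_\e)$ with $\alpha \le 1$ small enough that Lemma~\ref{double integral}ii) applies (e.g.\ $\alpha < \tfrac12 \hdim(\bd X, d_\e)$). I would estimate the difference
\[
\left| \int g_n^{-1}.\phi \,\D\mu - \phi(\omega) \right|
= \left| \int \big( \phi(g_n \xi) - \phi(\omega) \big)\, \D\mu(\xi) \right|
\le \|\phi\|_{\mathrm{H\ddot{o}l}_\alpha} \int d_\e(g_n\xi, \omega)^\alpha\, \D\mu(\xi).
\]
The task reduces to showing $\int d_\e(g_n\xi, \omega)^\alpha\, \D\mu(\xi) \to 0$ when $g_n \to \omega$ in $\overline\G$. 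The key geometric input is the shadow/contraction behaviour of the boundary action: when $g_n o \to \omega$, the map $g_n$ pushes almost all of $\bd X$ into a small neighbourhood of $\omega$, with the complement (a shadow of $g_n^{-1}o$) having $\mu$-measure controlled via Ahlfors regularity (Lemma~\ref{Ahlfors}). Concretely, using the companion metric-like map $d_\e(\cdot,\cdot)$ on $X \times \bd X$ from Lemma~\ref{small visual range}, one has a Gromov-product inequality of the form $(g_n\xi, \omega)_o \gtrsim d(o, g_n o) - (g_n^{-1}o, \xi)_o - (\text{const})$, so that $d_\e(g_n\xi, \omega) \lesssim \exp(-\e\, d(o,g_n o))\, d_\e(g_n^{-1}o, \xi)^{-1}$ (this is essentially the inequality \eqref{annular bound} specialized with $\xi' $ replaced by a sequence converging to $\omega$). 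Splitting $\bd X$ into the annuli $\Delta_k = \{\xi : \exp(-\e k) \le d_\e(g_n^{-1}o, \xi) \le \exp(-\e(k-1))\}$ exactly as in the proof of Lemma~\ref{double integral}, and using $\mu(\Delta_k) \le C \exp(-e_{\G\act X} k)$ together with $e_{\G\act X} = \e \cdot \hdim(\bd X, d_\e) > \e\alpha$ (valid for our choice of $\alpha$), the geometric series sums to give
\[
\int d_\e(g_n\xi, \omega)^\alpha\, \D\mu(\xi) \le C' \exp(-\alpha\,\e\, d(o, g_n o)),
\]
which tends to $0$ since $d(o, g_n o) \to \infty$. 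Passing from the small-visual-parameter case to an arbitrary visual parameter is handled by the Scaling Lemma~\ref{scaling lemma} as in Lemma~\ref{double integral}, and comparability of visual measures (Proposition~\ref{deviation for comparable measures}, Lemma~\ref{scaling lemma}) shows the statement is insensitive to the choice of visual metric.

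The main obstacle I anticipate is bookkeeping with the Gromov products involving $\omega \in \bd X$ rather than a point of $X$: one must be careful that $g_n^{-1}o \to$ (some boundary point, generically $\neq \omega$) and that the inequality relating $(g_n\xi, \omega)_o$, $(g_n^{-1}o,\xi)_o$ and $d(o, g_n o)$ genuinely holds up to bounded error for all $\xi$, uniformly in $n$ --- in particular near the ``bad'' direction $g_n^{-1}o$ where $d_\e(g_n^{-1}o,\xi)$ is smallest and the pointwise bound on $d_\e(g_n\xi,\omega)$ degenerates. This is precisely why the annular decomposition is needed: the small-measure estimate $\mu(\Delta_k) \le C\exp(-e_{\G\act X}k)$ must beat the growth $\exp(\alpha\e k)$ coming from the weak bound on that annulus, which it does because $\alpha$ was chosen with $\alpha\e < e_{\G\act X}$. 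Everything else is a routine repackaging of the computations already carried out for Lemma~\ref{double integral}.
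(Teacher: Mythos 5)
Your overall strategy (reduce to H\"older functions by density, then estimate $\int d_\e(g_n\xi,\omega)^\alpha\,d\mu$ by an annular decomposition around $g_n^{-1}o$) is sound, but the key pointwise inequality you invoke is false as stated. You claim $(g_n\xi,\omega)_o \gtrsim d(o,g_no) - (g_n^{-1}o,\xi)_o - \mathrm{const}$, i.e.\ $d_\e(g_n\xi,\omega)\lesssim \exp(-\e\, d(o,g_no))\, d_\e(g_n^{-1}o,\xi)^{-1}$, and justify it as the specialization of \eqref{annular bound} with $\xi'$ tending to $g_n^{-1}\omega$. In that specialization, however, the factor $d_\e(g_n^{-1}o,\xi')$ does not stay bounded below by a constant: it tends to $d_\e(g_n^{-1}o,g_n^{-1}\omega)\asymp \exp\big(-\e\,(o,\omega)_{g_no}\big)\asymp\exp\big(-\e\,(d(o,g_no)-(g_no,\omega)_o)\big)$, and dividing by it converts $\exp(-\e\, d(o,g_no))$ into $\exp(-\e\,(g_no,\omega)_o)\asymp d_\e(g_no,\omega)$. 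The correct inequality is thus $(g\xi,\omega)_o+(g^{-1}o,\xi)_o\geq (go,\omega)_o-3\delta$, equivalently $d_\e(g\xi,\omega)\,d_\e(g^{-1}o,\xi)\lesssim d_\e(go,\omega)$, which is exactly \eqref{split} in the paper's proof. Your stronger version, with $d(o,g_no)$ in place of $(g_no,\omega)_o$, fails whenever $g_no\to\omega$ non-radially: convergence in $\overline\G$ only forces $(g_no,\omega)_o\to\infty$, with no rate relative to $d(o,g_no)$. For instance $g_no$ may track the ray to $\omega$ only for an initial segment of length about $\sqrt{d(o,g_no)}$; then $(g_n\xi,\omega)_o\approx\sqrt{d(o,g_no)}$ for generic $\xi$, and both your pointwise bound and the final estimate $\int d_\e(g_n\xi,\omega)^\alpha\,d\mu\leq C'\exp(-\alpha\e\, d(o,g_no))$ are false.

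The gap is local and fixable. With the corrected inequality, the same annular decomposition over $\Delta_k$ together with $\mu(\Delta_k)\lesssim\exp(-e_{\G\act X}k)$ yields $\int d_\e(g_n\xi,\omega)^\alpha\,d\mu\lesssim d_\e(g_no,\omega)^\alpha$, and $d_\e(g_no,\omega)\to 0$ is precisely what $g_n\to\omega$ in $\overline\G$ gives, so the lemma follows; note that here you only need $\alpha\e<e_{\G\act X}$ (a single integral), not $2\alpha\e<e_{\G\act X}$. For comparison, the paper's proof avoids the H\"older reduction entirely: for an arbitrary continuous $\phi$ it uses uniform continuity, introduces the bad set $Z_i=\{\xi: d_\e(g_i\xi,\omega)\geq R\}$, observes via \eqref{split} that $Z_i$ has diameter $\lesssim R^{-1}d_\e(g_io,\omega)$, and then bounds $\mu_\e(Z_i)$ by Ahlfors regularity (Lemma~\ref{Ahlfors}). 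Your corrected quantitative estimate is an acceptable alternative route, but as written the central inequality and the claimed decay rate are wrong.
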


\begin{proof} Let $(g_i)$ be a sequence in $\G$ converging in the compactification $\overline \G$ to $\omega\in \bd\G$. As above, let $X$ be a hyperbolic space on which $\G$ acts geometrically such that $\mu$ is a probability measure coming from a visual metric on $\bd X$. We have
\begin{align*}
\bigg| \int_{\bd X} \phi\: d(g_i)_*\mu-\phi(\omega)\bigg|=\bigg| \int_{\bd X} \phi(g_i\xi)-\phi(\omega)\; d\mu(\xi)\bigg| \leq \int_{\bd X} \big|\phi(g_i\xi)-\phi(\omega)\big|\; d\mu(\xi)
\end{align*}
for all $\phi \in C(\bd X)$. Without loss of generality we may take $\mu=\mu_\e$, where the visual parameter $\e$ is in the small visual range, so that Lemma~\ref{small visual range} applies.

Fix $\phi\in C(\bd X)$ and let $t>0$. The uniform continuity of $\phi$ provides us with some $R>0$ such that $|\phi(\xi)-\phi(\xi')|<t$ whenever $d_\e(\xi,\xi')<R$. The set
\begin{align*}
Z_i=\{\xi\in \bd X: d_\e(g_i\xi,\omega)\geq R\}
\end{align*}
is measurable, since $\xi\mapsto d_\e(g_i\xi,\omega)$ is continuous. We write:
\begin{align*}
\int_{\bd X} \big|\phi(g_i\xi)-\phi(\omega)\big|\; d\mu_\e(\xi) &= \int_{\bd X\setminus Z_i} \big|\phi(g_i\xi)-\phi(\omega)\big|\; d\mu_\e(\xi) + \int_{Z_i} \big|\phi(g_i\xi)-\phi(\omega)\big|\; d\mu_\e(\xi)\\
&\leq  t+2\|\phi\|_\infty\: \mu_\e (Z_i)
\end{align*}
Fix a basepoint $o\in X$. We first establish the following fact: there is $C_1>0$ such that
\begin{align}\label{split}
d_\e(g\xi,\omega)\:d_\e(g^{-1}o,\xi)\leq C_1\:d_\e(go,\omega)
\end{align}
for all $g\in \G$ and $\xi,\omega\in\bd\G$. Indeed, let $(x_i)$, respectively $(w_i)$, be sequences in $X$ converging to $\xi$, respectively $\omega$. One easily checks that
$(gx_i,w_i)_o+(g^{-1}o,x_i)_o\geq (go,w_i)_o$; taking $\limsup$ and then invoking \eqref{liminf limsup} and \eqref{liminf limsup 2}, we are led to $(g\xi,\omega)_o+(g^{-1}o,\xi)_o\geq (go,\omega)_o-3\delta$. Exponentiating this inequality, we get \eqref{split}.

Now if $\xi,\xi'\in Z_i$, then both $d_\e(g_i^{-1}o,\xi)$ and $d_\e(g_i^{-1}o,\xi')$ are at most $C_1R^{-1}\: d_\e(g_io,\omega)$ by \eqref{split}; hence $d_\e(\xi,\xi')\leq 2C_1R^{-1} d_\e(g_io,\omega)$. It follows from Lemma~\ref{Ahlfors} that there is $C_2>0$ such that
\begin{align*}
\mu_\e (Z_i)\leq C_2\: d_\e(g_io,\omega)^{e_{\G\act X}/\e}
\end{align*}
for each $i$. But $d_\e(g_io,\omega)\to 0$, since $g_i\to \omega$ in $\overline \G$, and we obtain 
\begin{align*}
\int_{\bd X} \big|\phi(g_i\xi)-\phi(\omega)\big|\; d\mu_\e(\xi)\leq 2t
\end{align*}
for $i$ large enough. The proof is complete.
\end{proof}

Using the $\G$-expectation with respect to $\mu$, we may extend continuous functions on the boundary $\bd\G$ to functions on the compactification $\overline{\G}$. Namely, for $\phi\in C(\bd\G)$, define $\Ebar(\phi)$ on $\overline{\G}$ by gluing $\phi$ to its $\G$-expectation $\E_\G(\phi)$: 
\begin{align*}
\Ebar(\phi)= 
\begin{cases} \phi & \text{on $\bd \G$}
\\
\E_\G (\phi) &\text{on $\G$}
\end{cases}
\end{align*}
By Lemma~\ref{sharp delta}, the extension $\Ebar (\phi)$ is continuous on $\overline{\G}$. Therefore the linear map
\[\Ebar: C(\bd \G)\to C(\overline{\G})\]
is a completely positive section for the quotient map $C(\overline{\G})\onto C(\bd \G)$ induced by restriction. We want to promote $\Ebar$ to a completely positive section for $C(\overline{\G})\rtimes \G\onto \hi$. Let
\begin{align}\label{def sec}
s : C(\bd \G)\rtimes _{\textrm{alg}} \G \to C(\overline{\G})\rtimes \G, \qquad s\big(\sum \phi_g\, g\big) = \sum \Ebar (\phi_g)\, g.
\end{align}

\begin{thm}\label{representing boundary class} With the above notations, we have the following:
\begin{itemize}
\item[i)] $s$ extends by continuity to a completely positive section for $C(\overline{\G})\rtimes \G\onto \hi$.

\item[ii)]  $(\lambda_{\mu}, P_{\ell^2\G})$ is a Fredholm module representing the boundary extension class $[\bd_\G]$.
\end{itemize}
\end{thm}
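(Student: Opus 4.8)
The plan is to verify the two claims in turn, with part i) being mostly a matter of assembling the pieces already in place and part ii) requiring an explicit identification of the Stinespring dilation of $s$ with the Fredholm module $(\lambda_\mu, P_{\ell^2\G})$.

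For i), the key point is that $\Ebar:C(\bd\G)\to C(\overline\G)$ is a well-defined completely positive unital section for the restriction map $C(\overline\G)\onto C(\bd\G)$: continuity of $\Ebar(\phi)$ on $\overline\G$ is exactly Lemma~\ref{sharp delta}, and complete positivity follows because $\Ebar$ is, coefficientwise, an integral against the positive measures $g_*\mu$. To promote $\Ebar$ to a section $s$ at the level of crossed products, I would equip $C(\overline\G)$ and $C(\bd\G)$ with their $\G$-actions and note that $\Ebar$ is $\G$-equivariant: this is the identity $\E_\G(g.\phi)(h) = \E_\G(\phi)(hg)$, which is immediate from the definition of the $\G$-expectation. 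A $\G$-equivariant unital completely positive map between $\G$-$\Cstar$-algebras induces a unital completely positive map between the reduced crossed products (and here the full and reduced crossed products coincide by amenability of the boundary action). That this induced map is a section for $C(\overline\G)\rtimes\G\onto\hi$ is clear from the formula \eqref{def sec}, since $\Ebar(\phi)$ restricts to $\phi$ on $\bd\G$. This handles i).

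For ii), I would apply the Stinespring construction to the unital completely positive section $s$ from i) and check that the resulting Fredholm module is unitarily equivalent to $(\lambda_\mu, P_{\ell^2\G})$; the $\K^1$-class is then $[\bd_\G]$ by the description of the $\mathrm{Ext}(\hi)\simeq\K^1(\hi)$ isomorphism recalled in \S\ref{what is the boundary extension}. Concretely: the faithful representation of $C(\overline\G)\rtimes\G$ on $\ell^2\G$ (multiplication by $C(\overline\G)$, left regular representation of $\G$) composed with $s$ is a unital completely positive map $\hi\to\B(\ell^2\G)$; its minimal Stinespring dilation gives a representation $\widetilde\pi$ of $\hi$ on some Hilbert space $\widetilde H \supseteq \ell^2\G$, and the associated Fredholm module is $(\widetilde\pi, P_{\ell^2\G})$. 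The task is to show $\widetilde H \cong \ell^2(\G, L^2(\bd X,\mu))$ with $\widetilde\pi \cong \lambda_\mu$ and the inclusion $\ell^2\G\into \widetilde H$ corresponding to the constant-functions subspace. The dilation of multiplication by $\phi\in C(\bd\G)$ on $\ell^2\G$ through the positive functional $\delta_h \mapsto \la \Ebar(\phi)\delta_h,\delta_h\ra = \int h^{-1}.\phi\, d\mu = \int \phi\, d(h^{-1})_*\mu$ is exactly multiplication on $L^2$ of the appropriate measure; fibering over $h\in\G$ and using that $(h^{-1})_*\mu$ is, up to the unitary rescaling by a Radon--Nikodym derivative (cf. the proof of Proposition~\ref{independence of comparable measures}), carried to $\mu$, one recovers $\lambda_\mu$ on $\ell^2(\G, L^2(\bd X,\mu))$.

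The main obstacle is the bookkeeping in the Stinespring identification in ii): one must check that the minimal dilation is genuinely the whole of $\ell^2(\G, L^2(\bd X,\mu))$ (cyclicity of $\ell^2\G$ under $\lambda_\mu(\hi)$, which uses the full support of $\mu$), and that the group part of the dilated representation matches $\lambda_\mu|_\G$ --- the latter is slightly delicate because dilating a covariant pair requires checking the covariance is preserved, i.e. that the Stinespring projection $P_{\ell^2\G}$ commutes with $\widetilde\pi|_\G$, which is precisely the statement that the section $s$ is $\G$-equivariant, already noted in i). An alternative that sidesteps some of this is to produce directly a norm-continuous path, or an explicit unitary, realizing the Busby invariant of the extension $0\to\Comp(\ell^2\G)\to\B(\ell^2\G)\to\Calk\to 0$ pulled back along $\hi\xrightarrow{\;s\;}C(\overline\G)\rtimes\G\to\B(\ell^2\G)$ as the class of $(\lambda_\mu, P_{\ell^2\G})$; but since Theorem~\ref{sharp general} and Proposition~\ref{from deviation to Fredholm} already give that $(\lambda_\mu, P_{\ell^2\G})$ is a Fredholm module, the cleanest route is the Stinespring comparison above, and I would expect the write-up to proceed that way.
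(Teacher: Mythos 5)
Your proposal is correct in substance, and its computational core is the same identity the paper's proof turns on: by \eqref{projection compressed lambda} (together with $\pi(g)=P_{\ell^2\G}\lambda_\mu(g)P_{\ell^2\G}$, where $\pi$ is the faithful representation of $C(\overline{\G})\rtimes\G$ on $\ell^2\G$), one has $\pi s(a)=P_{\ell^2\G}\lambda_\mu(a)P_{\ell^2\G}$ for all $a$ in the algebraic crossed product. The difference is how much machinery is wrapped around that identity. For i), the paper does not pass through the general lemma that a $\G$-equivariant unital completely positive map induces a completely positive map of reduced crossed products; it reads off boundedness and complete positivity of $s$ directly from the compression identity and the faithfulness of $\pi$. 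Your route works (the equivariance of $\Ebar$ is genuine), but it invokes a standard-yet-nontrivial external fact where one displayed formula suffices. For ii), the paper again uses the same identity: $(\lambda_\mu,\,\ell^2(\G,L^2(\bd X,\mu)))$ together with the inclusion of $\ell^2\G$ \emph{is} a Stinespring dilation of $\pi s$, so $(\lambda_\mu,P_{\ell^2\G})$ represents $[\bd_\G]$ outright; your identification of the \emph{minimal} dilation (cyclicity of $\ell^2\G$, the multiplicative-domain argument for the group part) is sound but unnecessary, since any Fredholm module whose compression reproduces $\pi s$ modulo compacts has the Busby invariant of the extension and hence the same class. Three small corrections to your bookkeeping: with the paper's conventions the equivariance identity reads $\E_\G(g.\phi)(h)=\E_\G(\phi)(g^{-1}h)$, i.e. $\Ebar(g.\phi)=g.\Ebar(\phi)$, not $\E_\G(g.\phi)(h)=\E_\G(\phi)(hg)$; the diagonal value is $\la\Ebar(\phi)\delta_h,\delta_h\ra=\int\phi\,\D h_*\mu$, not $\int\phi\,\D (h^{-1})_*\mu$; and cyclicity of $\ell^2\G$ under $\lambda_\mu(\hi)$ uses the density of $C(\bd X)$ in $L^2(\bd X,\mu)$ (automatic for a Radon measure), rather than full support of $\mu$, which is what makes $\lambda_\mu$ faithful.
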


\begin{proof} Recall, $\lambda_{\mu}$ is the (faithful) regular representation of $\hi$ on $\ell^2(\G, L^2(\bd\G,\mu))$. Let $\pi$ denote the (faithful) representation of $C(\overline{\G})\rtimes\G$ on $\ell^2\G$. Formula \eqref{projection compressed lambda} says that $\pi\big(\Ebar(\phi)\big)= P_{\ell^2\G} \lambda_{\mu}(\phi) P_{\ell^2\G}$ for all $\phi \in C(\bd \G)$. On the other hand, we have $\pi(g)=P_{\ell^2\G} \lambda_{\mu}(g) P_{\ell^2\G}$ for all $g \in  \G$. It follows that
\begin{align}\label{stined}
\pi s(a)= P_{\ell^2\G} \lambda_{\mu}(a) P_{\ell^2\G}
\end{align}
for all $a\in C(\bd \G)\rtimes _{\textrm{alg}} \G$. Using \eqref{stined} and the faithfulness of $\pi$, we  see that $s$ extends by continuity to a completely positive section for $C(\overline{\G})\rtimes \G\onto \hi$. This concludes part i). The Stinespring dilation $\pi s= P_{\ell^2\G}\: \lambda_{\mu}\: P_{\ell^2\G}$ shows that ii) holds. \end{proof}
 
\begin{rem}
The completely positive section $s$ is $\G$-biequivariant: $s(gag')=gs(a)g'$ for all $g,g'\in \G$ and $a\in \hi$. The left $\G$-equivariance of $s$ follows from the $\G$-equivariance of $\Ebar$, namely $\Ebar(g.\phi)=g.\Ebar(\phi)$ for all $g\in \G$ and $\phi\in C(\bd \G)$. The right $\G$-equivariance of $s$ follows directly from the definition.
\end{rem}

We may pass from torsion-free to virtually torsion-free groups, and establish a version of Proposition~\ref{order tf} for this much larger class, by using Proposition ~\ref{finite index}. Note that the comparability condition, required in the statement of Proposition ~\ref{finite index}, is satisfied in our setting. Indeed, for any choice of visual metric on $\bd \G$, $\G$ acts by Lipschitz maps on $\bd\G$; consequently, if $\mu$ is a visual probability measure on $\bd\G$, then $g_*\mu$ is comparable to $\mu$ for all $g\in \G$. In light of Theorem~\ref{representing boundary class}, and of the fact that the boundary is preserved by passing to finite-index subgroups, Proposition ~\ref{finite index} implies the following:

\begin{prop}\label{finite index boundary} Let $\Lambda$ be a finite-index subgroup of $\G$. Then the restriction homomorphism $\mathrm{res}: \K^1(\hi)\to\K^1(C(\bd \Lambda)\rtimes \Lambda)$  sends $[\bd_\G]$ to $[\G:\Lambda] \cdot [\bd_{\Lambda}]$.
\end{prop}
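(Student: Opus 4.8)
The plan is to reduce the statement to the general result of Proposition~\ref{finite index}, applied with $G=\G$ and $H=\Lambda$, after arranging a topological realization of the boundary that serves both groups at once. First I would fix a hyperbolic space $X$ carrying a geometric action of $\G$, a visual metric $d_\e\in\mathcal{V}(\bd X)$, and the associated normalized Hausdorff measure $\mu:=\mu_\e$. The crucial remark is that a finite-index subgroup $\Lambda\leq\G$ automatically acts geometrically on the same space $X$: it inherits the isometric and proper action from $\G$, and it acts cocompactly because $X/\Lambda$ is a finite cover of the compact quotient $X/\G$. Consequently $\bd X$ is also a topological model for $\bd\Lambda$, the $\Lambda$-action on $\bd X$ being the restriction of the $\G$-action; and since $\mu_\e$ depends only on $(X,d_\e)$, it is simultaneously a visual probability measure for $\G\act\bd X$ and for $\Lambda\act\bd X$. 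Under these identifications, $C(\bd\Lambda)\rtimes\Lambda$ is identified with $C(\bd X)\rtimes\Lambda\subseteq C(\bd X)\rtimes\G=\hi$, and the restriction homomorphism $\mathrm{res}\colon\K^1(\hi)\to\K^1(C(\bd\Lambda)\rtimes\Lambda)$ becomes exactly the map $\K^1(C(\bd X)\rtimes\G)\to\K^1(C(\bd X)\rtimes\Lambda)$ of Proposition~\ref{finite index}.

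Next I would check that the two hypotheses of Proposition~\ref{finite index} hold for $\mu$. It has $C_0$-deviation --- indeed even $\ell^p$-deviation for a suitable $p$ --- by Theorem~\ref{sharp general}, equivalently by Proposition~\ref{from deviation to Fredholm} together with the fact that $(\lambda_\mu,P_{\ell^2\G})$ is a Fredholm module. And the family $\{g_*\mu\}_{g\in\G}$ consists of mutually comparable measures: $\G$ acts by bi-Lipschitz homeomorphisms of $(\bd X,d_\e)$, so each $g_*\mu$ is comparable to $\mu$ (as noted in the discussion preceding the statement), hence any two are comparable. Proposition~\ref{finite index} then yields
\begin{align*}
\mathrm{res}\,\big[(\lambda^\G_\mu,P_{\ell^2\G})\big]=[\G:\Lambda]\,\big[(\lambda^\Lambda_\mu,P_{\ell^2\Lambda})\big]
\end{align*}
in $\K^1(C(\bd X)\rtimes\Lambda)$.

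It remains to identify the two sides. By Theorem~\ref{representing boundary class}(ii) applied to the geometric action $\G\act X$ and the visual measure $\mu$, the class $[(\lambda^\G_\mu,P_{\ell^2\G})]$ is the boundary extension class $[\bd_\G]\in\K^1(\hi)$; by the same theorem applied to the geometric action $\Lambda\act X$ and the same measure $\mu$ (now regarded as a visual probability measure for $\Lambda$), the class $[(\lambda^\Lambda_\mu,P_{\ell^2\Lambda})]$ is $[\bd_\Lambda]$. Substituting these two identities into the displayed equation gives $\mathrm{res}[\bd_\G]=[\G:\Lambda]\,[\bd_\Lambda]$, as desired. I expect the only genuinely delicate point to be the bookkeeping in the first paragraph: one must make sure that $\bd\Lambda$ really is the same $\Lambda$-space as $\bd X$, that $\mu$ legitimately counts as a visual probability measure for the $\Lambda$-action, and that under these identifications the abstract restriction map of the statement coincides with the one appearing in Proposition~\ref{finite index}. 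Once that is in place, the proof is a direct combination of Theorem~\ref{representing boundary class} and Proposition~\ref{finite index}.
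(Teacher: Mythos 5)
Your proposal is correct and follows essentially the same route as the paper: the paper likewise deduces the statement from Proposition~\ref{finite index}, noting that the comparability of $\{g_*\mu\}_{g\in\G}$ holds because $\G$ acts by Lipschitz maps on $(\bd X,d_\e)$, that the boundary (and the visual measure) is unchanged under passage to a finite-index subgroup, and then identifying both regular Fredholm modules with the respective boundary extension classes via Theorem~\ref{representing boundary class}. Your extra bookkeeping about $\Lambda$ acting geometrically on the same $X$ is exactly the implicit content of the paper's remark that the boundary is preserved by finite-index subgroups.
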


Recall that the rational Euler characteristic of a virtually torsion-free group $\G$ is defined by the formula $\chi(\G)=\chi(\Lambda)/[\G: \Lambda]$ where $\Lambda$ is any torsion-free subgroup of finite index. Combining Proposition~\ref{order tf} and Proposition~\ref{finite index boundary}, we obtain:

\begin{cor}
Assume that $\G$ is virtually torsion-free. If $\chi(\G)\notin 1/\Z$ then $[\bd_\G]$ is not trivial. If $\chi(\G)=0$ then $[\bd_\G]$ has infinite order.
\end{cor}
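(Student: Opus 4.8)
The plan is to reduce everything to the torsion-free case, already handled by Proposition~\ref{order tf}, by means of the finite-index comparison in Proposition~\ref{finite index boundary}. Since $\G$ is virtually torsion-free I would fix a torsion-free finite-index subgroup $\Lambda\leq\G$ and put $n=[\G:\Lambda]$; then $\Lambda$ is again a non-elementary hyperbolic group, and $\chi(\G)=\chi(\Lambda)/n$ by the very definition of the rational Euler characteristic. By Proposition~\ref{finite index boundary} the restriction homomorphism $\mathrm{res}\colon\K^1(\hi)\to\K^1(C(\bd\Lambda)\rtimes\Lambda)$ sends $[\bd_\G]$ to $n\cdot[\bd_\Lambda]$, so the whole question comes down to controlling the order of $[\bd_\Lambda]$ for the torsion-free group $\Lambda$.

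For that I would use the sharpened form of Proposition~\ref{order tf} recorded in \S\ref{what is the boundary extension}: via Poincar\'e duality, $[\bd_\Lambda]$ is the dual of the unit class $[1]\in\K_0(C(\bd\Lambda)\rtimes\Lambda)$, and the Gysin sequence for $\Cred\Lambda\to C(\bd\Lambda)\rtimes\Lambda$ computes the order of that unit class to be $|\chi(\Lambda)|$ when $\chi(\Lambda)\neq 0$, and infinite when $\chi(\Lambda)=0$. Since Poincar\'e duality is an isomorphism of abelian groups and isomorphisms preserve orders of elements, the order of $[\bd_\Lambda]$ in $\K^1(C(\bd\Lambda)\rtimes\Lambda)$ is likewise $|\chi(\Lambda)|$ if $\chi(\Lambda)\neq 0$ and infinite if $\chi(\Lambda)=0$.

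Granting this, both conclusions follow by a short manipulation with orders under the homomorphism $\mathrm{res}$. For the first, I would argue by contradiction: if $\chi(\G)\notin 1/\Z$ yet $[\bd_\G]=0$, then $n\cdot[\bd_\Lambda]=\mathrm{res}[\bd_\G]=0$; this rules out $\chi(\Lambda)=0$ (as $[\bd_\Lambda]$ would then have infinite order and $n\geq 1$), so $\chi(\Lambda)\neq 0$ and $|\chi(\Lambda)|$ divides $n$, say $n=k|\chi(\Lambda)|$ with $k$ a positive integer, whence $|\chi(\G)|=|\chi(\Lambda)|/n=1/k\in 1/\Z$, a contradiction. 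For the second, if $\chi(\G)=0$ then $\chi(\Lambda)=n\chi(\G)=0$, so $[\bd_\Lambda]$ has infinite order; since $n\geq 1$, so does $n\cdot[\bd_\Lambda]=\mathrm{res}[\bd_\G]$; and as $\mathrm{res}$ is a group homomorphism, $m\cdot[\bd_\G]=0$ would give $m\cdot\mathrm{res}[\bd_\G]=0$, forcing $m=0$. Hence $[\bd_\G]$ has infinite order.

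The one point that really matters here is the second paragraph. Proposition~\ref{order tf} as stated pins down when $[\bd_\Lambda]$ vanishes and when it has infinite order, but not its precise finite order in the remaining cases, which is exactly what the divisibility step needs. What makes the argument go through is the exact value $|\chi(\Lambda)|$ for the order of $[\bd_\Lambda]$, and this is precisely what the combination of Poincar\'e duality \cite{Emerson} and the Gysin sequence \cite{Emerson:Euler} supplies; everything else is routine arithmetic.
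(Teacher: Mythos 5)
Your proposal is correct and follows the paper's intended route: reduce to a torsion-free finite-index subgroup $\Lambda$ via Proposition~\ref{finite index boundary} and invoke the torsion-free results behind Proposition~\ref{order tf}. You rightly observe that the divisibility step needs the exact order $|\chi(\Lambda)|$ of $[\bd_\Lambda]$, which is exactly what the paper's discussion in \S\ref{what is the boundary extension} (Poincar\'e duality identifying $[\bd_\Lambda]$ with the dual of the unit class, whose order the Gysin sequence computes) supplies, so the argument matches what the paper means by ``combining'' the two propositions.
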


The assumption of virtual torsion-freeness is a very mild one. A long-standing open problem asks whether all hyperbolic groups are virtually torsion-free.



\begin{thebibliography}{00}
\bibitem{Ada} S. Adams: \emph{Boundary amenability for word hyperbolic groups and an application to smooth dynamics of simple groups}, Topology 33 (1994), no. 4, 765--783

\bibitem{Ana97} C. Anantharaman-Delaroche: \emph{Purely infinite $C^*$-algebras arising from dynamical systems}, Bull. Soc. Math. France 125 (1997), no. 2, 199--225

\bibitem{AR00} C. Anantharaman-Delaroche, J. Renault: \emph{Amenable groupoids}, Monographie de L'Enseignement Math\'ematique 36, 2000

\bibitem{Ana02} C. Anantharaman-Delaroche: \emph{Amenability and exactness for dynamical systems and their $C^\ast$-algebras}, Trans. Amer. Math. Soc. 354 (2002), no. 10, 4153--4178

\bibitem{BF} M. Bonk, T. Foertsch: \emph{Asymptotic upper curvature bounds in coarse geometry}, Math. Z. 253 (2006), no. 4, 753--785

\bibitem{Con89} A. Connes: \emph{Compact metric spaces, Fredholm modules, and hyperfiniteness}, Ergodic Theory Dynam. Systems 9 (1989),  no. 2, 207--220

\bibitem{Connes} A. Connes: \emph{Noncommutative Geometry}, Academic Press 1994


\bibitem{Coo} M. Coornaert: \emph{Mesures de Patterson-Sullivan sur le bord d'un espace hyperbolique au sens de Gromov}, Pacific J. Math. 159 (1993),  no. 2, 241--270


\bibitem{Emerson} H. Emerson: \emph{Noncommutative Poincar\'e duality for boundary actions of hyperbolic groups}, J. Reine Angew. Math. 564 (2003), 1--33 

\bibitem{Emerson:Euler} H. Emerson, R. Meyer: \emph{Euler characteristics and Gysin sequences for group actions on boundaries}, Math. Ann. 334 (2006), no. 4, 853--904

\bibitem{EN2} H. Emerson, B. Nica: \emph{Dirac cycles and boundary actions of hyperbolic groups}, preprint 2012


\bibitem{GdH} \'E. Ghys, P. de la Harpe (eds.), \emph{Sur les groupes hyperboliques d'apr\`es Mikhael Gromov}, Progress in Mathematics 83, Birkh\"auser 1990

\bibitem{Gromov} M. Gromov: \emph{Hyperbolic groups}, in \emph{Essays in group theory}, 75--263, Publ. MSRI 8, Springer 1987 

\bibitem{Hei} J. Heinonen: \emph{Lectures on analysis on metric spaces}, Springer 2001

\bibitem{HR} N. Higson, J. Roe: \emph{Analytic K-homology}, Oxford Mathematical Monographs, Oxford University Press 2000

\bibitem{Kai} V. Kaimanovich: \emph{Boundary amenability of hyperbolic spaces}, in \emph{Discrete geometric analysis}, 83--111, Contemp. Math. 347, Amer. Math. Soc. 2004

\bibitem{KB} I. Kapovich, N. Benakli: \emph{Boundaries of hyperbolic groups}, in \emph{Combinatorial and geometric group theory (New York, 2000/Hoboken, NJ, 2001)}, 39--93, Contemp. Math. 296, Amer. Math. Soc. 2002

\bibitem{KK} M. Kapovich, B. Kleiner: \emph{Hyperbolic groups with low-dimensional boundary}, Ann. Sci. \'Ecole Norm. Sup. (4) 33 (2000), no. 5, 647--669

\bibitem{Kle} B. Kleiner: \emph{The asymptotic geometry of negatively curved spaces: uniformization, geometrization and rigidity}, International Congress of Mathematicians. Vol. II, 743--768, Eur. Math. Soc. 2006



\bibitem{LS96} M. Laca, J. Spielberg: \emph{Purely infinite $C^*$-algebras from boundary actions of discrete groups}, J. Reine Angew. Math. 480 (1996), 125--139




\bibitem{Sim} B. Simon: \emph{Trace ideals and their applications} (Second edition), Mathematical Surveys and Monographs 120, American Mathematical Society 2005


\bibitem{Vai} J. V\"ais\"al\"a: \emph{Gromov hyperbolic spaces}, Expo. Math. 23 (2005), no. 3, 187--231
\end{thebibliography}
\end{document}